\documentclass[10pt,a4ams]{amsart}

\usepackage{amsmath,amssymb,amsthm,hyperref}
\usepackage{enumerate}
\usepackage{todonotes}

\usepackage[english]{babel}

\usepackage{pstricks}
\usepackage{graphicx}

\usepackage{tikz}

\usepackage{subcaption}

\makeatletter
\@namedef{subjclassname@2010}{%
  \textup{2010} Mathematics Subject Classification}
\makeatother

\newtheorem{theo}{Theorem}[section]
\newtheorem{prop}[theo]{Proposition}
\newtheorem{lemm}[theo]{Lemma}
\newtheorem{coro}[theo]{Corollary}
\newtheorem{conj}[theo]{Conjecture}
\theoremstyle{remark}
\newtheorem{Deff}[theo]{Definition}
\newtheorem{exam}[theo]{Example}
\newtheorem{rema}[theo]{Remark}

\newtheorem{ques}[theo]{Problem}
\newtheorem*{clai-nn}{Claim}
\newtheorem*{theorem*}{Theorem}
\newtheorem*{corollary*}{Corollary}
\newtheorem*{rem*}{Remark}

\newcommand{\bbN}{\mathbb{N}}
\newcommand{\bbZ}{\mathbb{Z}}

\newcommand{\bbR}{\mathbb{R}}

\newcommand{\bbT}{\mathbb{T}}
\newcommand{\bbC}{\mathbb{C}}
\newcommand{\bbD}{\mathbb{D}}
\newcommand{\Pc}{\mathcal{P}}
\newcommand{\Qc}{\mathcal{Q}}

\newcommand{\Kc}{\mathcal{K}}

\newcommand{\Fc}{\mathcal{F}}
\newcommand{\Ec}{\mathcal{E}}
\newcommand{\Dc}{\mathcal{D}}

\newcommand{\Tc}{\mathcal{T}}

\newcommand{\Sc}{\mathcal{S}}
\newcommand{\Rc}{\mathcal{R}}

\numberwithin{equation}{section}
\usepackage[pagewise]{lineno}

\begin{document}

\title[Atoms of Compacta on Closed Surfaces]{Atoms of Compacta on Closed Surfaces}


\author[J. Luo]{Jun Luo}  \address{School of Mathematics\\     Sun Yat-Sen University\\ Guangzhou 512075, China} \email{luojun3@mail.sysu.edu.cn}

\author[J. Thuswaldner]{J\"org Thuswaldner} \address{Chair of Mathematics and Statistics, University of Leoben, Franz-Josef-Strasse 18, A-8700 Leoben, AUSTRIA}
\email{Joerg.Thuswaldner@unileoben.ac.at}

\author[X.T. Yao]{Xiao-Ting Yao} \address{School of Mathematics and Statistics, Guangdong University of Technology, Guangzhou 510520, China} \email{yaoxiaoting55@gdut.edu.cn}

\author[S.Q. Zhang]{Shuqin Zhang}
\address{School of Mathematics and Statistics, Zhengzhou University, 100 Science Avenue, Zhengzhou, Henan 45001, China}
\email{sqzhang@zzu.edu.cn}

\thanks{J. Luo is partly supported by National Key R\&D Program of China [No. 2024YFA 1013700]. J.~Thuswaldner is supported by the ANR-FWF Project I~6750. S.-Q. Zhang is supported by National Natural Science Foundation of China (Grant No. 12101566).}

\date{}

\begin{abstract}
For any compact set $K$ lying on a closed surface $\Sc$ we introduce a closed equivalence relation $\sim$, called the {\em Sch\"onflies equivalence} on $K$. We show that every class $[x]_\sim$ of $\sim$ is a continuum and that the resulting quotient space $K\!/\!\sim$ is a {\em Peano compactum}. By definition, all components of a Peano compactum are locally connected and for any $\varepsilon>0$ only finitely many of them have diameter greater than $\varepsilon$. The decomposition $\Dc_K=\{[x]_\sim: x\in K\}$ refines every other upper semicontinuous decomposition of $K$ into subcontinua that has a Peano compactum as its quotient space. In other words, $\Dc_K$ is the {\em core decomposition of $K$} with Peano quotient. The elements of $\Dc_K$ are called {\em atoms} of $K$. We also show that for any branched covering $f: \Sc^*\rightarrow \Sc$ from a closed surface $\Sc^*$ to $\Sc$, every atom of $f^{-1}(K)$ is sent into an atom of $K$. If $f$ is even a covering, it sends every atom of $f^{-1}(K)$ onto an atom of $K$. We illustrate our theory with examples and show that it cannot be generalized to $n$-manifolds with $n\ge 3$ by providing a detailed counterexample in~$\mathbb{R}^3$. 
\end{abstract}

\subjclass[2010]{54D05, 54H20, 37F45, 37E99.}
\keywords{Closed surface, core decomposition, Peano compactum.}

\maketitle

\tableofcontents


\section{Introduction}
In the present paper we are concerned with upper semi-continuous decompositions of compact subsets of a closed surface $\mathcal{S}$ whose associated quotient spaces have certain topological properties. Recall that an {\em upper semi-continuous decomposition} ({\em usc decomposition}, for short) $\Dc$ of a topological space $X$ is a partition of $X$ into closed subsets with the following property: For each $\delta\in\Dc$ and each open set $U$ containing $\delta$ there is an open set $V$ such that $\delta\subset V\subset U$ and $V$ is a union of elements of $\Dc$. If each element of $\Dc$ is connected, we call $\Dc$ a {\em usc decomposition into subcontinua}. See for instance \cite{Daverman86,Kelley55,Nadler92} for details on usc decompositions. In the present paper we are interested in usc decompositions $\Dc$ into subcontinua with some additional topological properties that are as fine as possible.

A classical application of  such decompositions comes from Moore's decomposition theorem ({\it cf.}~\cite[Theorem~22]{Moore25}), which gives rise to usc decompositions $\Dc$ of the extended complex plane $\widehat{\mathbb{C}}$ into {\it nonseparating} subcontinua.
The resulting quotient space for such a decomposition is known to be homeomorphic to $\widehat{\mathbb{C}}$. This theorem forms the starting point of the well-developed theory of decomposition of manifolds, see {\it e.g.} \cite{Daverman86}.

We recall some basic notions and relevant background. Let $\Dc$ be an usc decomposition into subcontinua of a compactum $K$. For any $x\in K$, denote the unique element of $\Dc$ that contains $x$ by $\Dc(x)$. The map $\pi_\Dc: K\rightarrow\Dc$ sending every $x\in K$ to $\Dc(x)$ will be called the natural projection on $\Dc$. Therefore, $\pi_\Dc(x)$ has two interpretations. It can be regarded as a point in the quotient space under the quotient topology, still denoted by $\Dc$ without risk of confusion, or it can be regarded as the subcontinuum $\Dc(x)$ of $K$. If $\Dc$ is clear from the context, we just write $\pi$ instead of $\pi_\Dc$. For an usc decomposition $\Dc$ of a compact metric space $K$ (a {\em compactum}, for short), the resulting quotient space $\Dc$ is again a compactum; see for instance \cite[Theorem~20]{Kelley55} and its corollary \cite[p.~149]{Kelley55}. 

Recall that a {\em Peano continuum} is the image of the unit interval $[0,1]$ under a continuous map. Due to the Hahn-Mazurkiewicz-Sierpi\'nski Theorem, a continuum is locally connected if and only if it is a Peano continuum. The following notion of Peano compactum provides a natural generalization of Peano continuum.

\begin{Deff}[{$F$-compactum and Peano compactum; {\it cf.}~\cite[Theorem~3]{LLY-2019}}]\label{PC}
A compact metric space is called an {\em $F$-compactum} if its nondegenerate components form a {\em null sequence}, in the sense that for any $\varepsilon>0$ there are at most finitely many components of diameter greater than $\varepsilon$. A compact metric space is called   {\em Peano compactum} if it is an $F$-compactum each of whose nondegenerate components is a Peano continuum.
\end{Deff} 

\begin{Deff}[Some kinds of decompositions]\label{CD_PC}
Let $\Dc$ be an usc decomposition into subcontinua of a compact metric space $K$.
\begin{enumerate}
\item If the resulting quotient space is a Peano compactum we call  $\Dc$  a {\em Peano decomposition}.
\item If $\Dc$ is a Peano decomposition and no other Peano decomposition refines $\Dc$ then $\Dc$ is called an {\em atomic decomposition of $K$ (with Peano quotient)}.
\item If $\Dc$ is a Peano decomposition that  refines every other Peano decomposition we call it the {\em core decomposition of $K$ (with Peano quotient)} and denote it by $\Dc_K^{PC}$.
 \end{enumerate}
\end{Deff}

Here we defined the core decomposition w.r.t.\ the property of being a Peano compactum. As can be seen in \cite[Definition 1.1]{FitzGerald_Swingle67}, core decompositions can be defined in a meaningful way also w.r.t.\ other properties. For instance, the existence of the core decomposition, w.r.t.\ the property of being semi-locally connected is established in \cite[Theorem 2.7~(3)]{FitzGerald_Swingle67}. In \cite[Theorem 5.4]{FitzGerald_Swingle67}, the authors treat a special case where the core decomposition of a continuum  w.r.t.\ local connectedness exists and coincides with its core decomposition w.r.t.\ semi-local connectedness. In the present paper we will use the term ``core decomposition'' exclusively in the sense of in Definition~\ref{CD_PC}.

Let $K$ be a compactum and suppose that the core decomposition $\Dc_K^{PC}$ of $K$ exists. Then $\Dc_K^{PC}$ is the finest Peano decomposition of $K$ and the unique atomic decomposition of $K$ with Peano quotient. Each of the elements of $\Dc_K^{PC}$ is called an {\it atom} of the original compactum $K$.

For any planar compactum $K\subset{\bbC}\simeq\bbR^2$, the core decomposition $\Dc_K^{PC}$ exists by \cite[Theorem 7]{LLY-2019}.  However, for compacta $K\subset\bbR^3$, the finest  Peano decomposition may not exist. Indeed, one example of such a compactum is provided in \cite[Example 7.1]{LLY-2019}.  After some modifications of this example, we can even construct a continuum $K\subset\bbR^3$ that has uncountably many atomic decompositions; see Section~\ref{sec:ex2}. Of course, the core decomposition with Peano quotient does not exist for this continuum $K$.

\begin{rema}
It is unknown whether there exists a compactum $K$ having exactly one atomic decomposition with Peano quotient, but for that the finest Peano decomposition does not exist. This issue is related to the following question on the set $\mathfrak{M}_K$  of all the Peano decompositions of~$K$. Define a partial order $\prec$ on $\mathfrak{M}_K$  by requiring that $\Dc_1\prec\Dc_2$ if and only if $\Dc_1$ is refined by $\Dc_2$. Let $\mathfrak{M}$ be a subset of $\mathfrak{M}_K$ that is well-ordered under $\prec$, so that every two distinct elements $\Dc_1,\Dc_2\in\mathfrak{M}$ satisfy either $\Dc_1\prec\Dc_2$ or $\Dc_2\prec\Dc_1$. Let $\Dc_0$ be the coarsest usc decomposition of $K$ into subcontinua that refines every decomposition in $\mathfrak{M}$. Do we always have $\Dc_0\in\mathfrak{M}_K$?
\end{rema}

The definition of the core decomposition $\Dc_K^{PC}$ for a compactum $K\subset\widehat{\bbC}$ is motivated by two models for the topology and the dynamics of polynomial Julia sets $J$, discussed by Blokh, Curry, and Oversteegen in \cite{BCO11} and \cite{BCO13}, respectively. When $J$ is connected, the model in \cite{BCO11} gives rise to quotient spaces that are Peano continua. When $J$ is disconnected, the model in \cite{BCO13} produces quotient spaces that are finitely Suslinian. If, in addition, $J$ has no irrationally neutral cycle, those models are obtained earlier by Kiwi \cite{Kiwi04}. Notice that these models are limited to planar compacta $K$ satisfying certain conditions. Both of them  are included as special cases of the core decomposition $\Dc_K^{PC}$ obtained in \cite[Theorem 7]{LLY-2019}, which applies to all planar compacta $K$.

It turns out that atoms are maintained when mapped by branched coverings. In particular, by \cite[Theorem~1.1]{LYY-2020}, if $f:\widehat{\bbC}\rightarrow\widehat{\bbC}$ is a branched covering (such as, for instance, a rational map) and  $K \subset\widehat{\bbC}$  is a compactum, then every atom of  $f^{-1}(K)$ is sent onto an atom of $K$. In other words, we have  $f(\delta)\in\Dc_K^{PC}$ for all $\delta\in\Dc_{f^{-1}(K)}^{PC}$.  As an application of this result, we may use the atoms of a Julia set $J$ of a rational map $f$ to induce interesting factors of the dynamical system $(J,f)$.
In the special case that $f$ is a polynomial and $J$ is connected, those factors give rise to a universal approach of generalizing the (classical) core entropy for post critically finite polynomials to more general polynomials; for details we refer to \cite{LTYY-2023}.

\section{Statement of the main results}
In the present paper we focus on compacta $K$ that are contained in a {\em closed surface}, {\em i.e.}, in a compact connected $2$-manifold without boundary. Our aim is to develop a theory of core decompositions also in this setting. We will show that the core decomposition $\Dc_K^{PC}$ for a compactum $K$ contained in a closed surface $\Sc$ always exists. Moreover, for any branched covering $f$ of a closed surface $\Sc'$ onto $\Sc$ and for any atom $\delta$ of $f^{-1}(K)$ we show that $f(\delta)$ is contained in a single atom of $K$. If $f$ is even a covering map then $f(\delta)$ is an atom of $K$.

In the sequel we need the following notion. We say that $\Qc=(Q,I_1,I_2)\subset\Sc$ is a {\it marked quadrilateral} if $Q$ is homeomorphic to $[0,1]^2$ and if $I_1, I_2 \subset \partial Q$ are two disjoint closed arcs. An illustration of such a quadrilateral is provided in Figure~\ref{fig:q}.  
\begin{figure}[ht]
\includegraphics[trim = 0 15 0 45, width=0.5\textwidth]{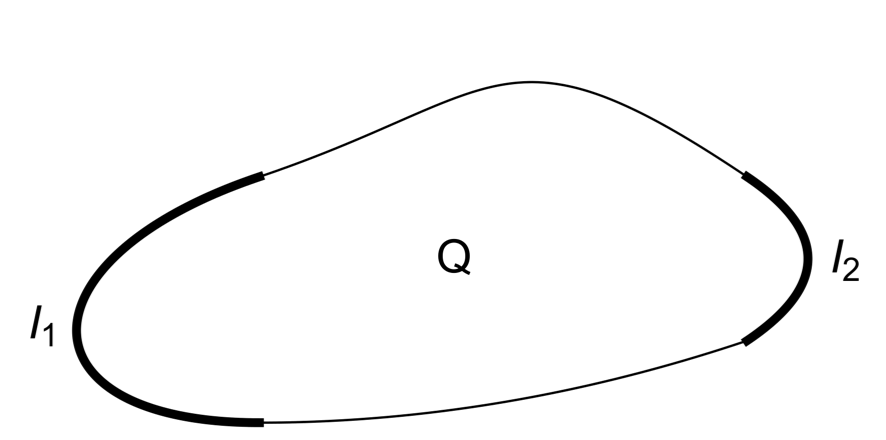}
\caption{A Quadrialteral with marked edges $I_1$ and $I_2$. \label{fig:q}}
\end{figure}
If $\Qc=(Q,I_1,I_2)\subset\Sc$ is a  marked quadrilateral with $x_1\in I_1$ and $x_2 \in I_2$ we say that $\Qc$ is a {\em marked quadrilateral with respect to the pair} $(x_1,x_2)$.

Using quadrilaterals we define the following variant of the Sch\"onflies relation from \cite[Definition 4]{LLY-2019}. This relation will be used to characterize Peano compacta on closed surfaces and to identify the core decomposition $\Dc_K^{PC}$ of any compactum $K$ on a closed surface.

\begin{Deff}[{\rm The Sch\"onflies relation on surfaces}] \label{R_K}
Let $K$ be a compactum on a closed surface $\Sc$.
The {\em Sch\"onflies relation} on $K$ is a set $R_K\subset K\times K$  consisting of all points $(x,y)\in K\times K$ such that either $x=y$ or one can find a marked quadrilateral $(Q,I_1,I_2)$ satisfying each of the following three conditions:
\begin{itemize}
\item[(a)] $x\in I_1$, $y\in I_2$ and $K\cap \partial Q \subset I_1 \cup I_2$.
\item[(b)] $K\cap Q$ has infinitely many components $Q_n$ $(n\ge1)$, satisfying $Q_n \cap I_i\not= \emptyset$ for each $i\in\{1,2\}$.
\item[(c)] $(Q_n)_{n\ge1}$ converges w.r.t.\ Hausdorff distance to a continuum $Q_\infty$ satisfying $x,y\in Q_\infty$.
\end{itemize}
\end{Deff}

When $\Sc$ is a sphere, the relation $R_K$ coincides with the Sch\"onflies relation given in \cite[Definition 4]{LLY-2019}; see Lemma \ref{lem:Rk=RkQ}.
Thus there is no harm in calling our new relation Sch\"onflies relation again. Following \cite{LLY-2019}, we also give the following definition.

\begin{Deff}[{Sch\"onflies equivalence on surfaces}]\label{SchEqSurf}
Let $K$ be a compactum on a closed surface $\Sc$. The  smallest closed equivalence relation containing $R_K$ is called  {\em Sch\"onflies equivalence}. We denote this equivalence relation by $\sim_K$ or, if  $K$ is clear from the context, just by $\sim$.
\end{Deff}

The collection $\Dc_K$ of the classes of $\sim_K$ forms a partition of $K$ into closed sets. By compactness of $K$ it is routine to verify that closedness of $R_K$ implies that $\Dc_K$ forms an usc decomposition of $K$. 

Our first main theorem shows that $\Dc_K$ is the core decomposition of $K$.

\begin{theo}\label{core}
Let $K\subset \mathcal{S}$. Then the core decomposition of $K$ exists and is given by~$\Dc_K$.
\end{theo}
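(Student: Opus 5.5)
The proof has two halves. First, $\Dc_K$ is a Peano decomposition: its elements are continua and $K/\!\sim$ is a Peano compactum. Second, $\Dc_K$ refines every Peano decomposition $\Dc$ of $K$. Together these say that $\Dc_K$ is the core decomposition.

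For the second half, fix a Peano decomposition $\Dc$ with projection $\pi:K\to\Dc$, and let $\approx$ be the equivalence relation whose classes are the elements of $\Dc$. Since $\Dc$ is usc, $\approx$ is closed. Because $\sim_K$ is the smallest closed equivalence relation containing $R_K$, it suffices to show $R_K\subset\,\approx$. Take $(x,y)\in R_K$ with $x\ne y$, a marked quadrilateral $(Q,I_1,I_2)$, and components $Q_n\to Q_\infty\ni x,y$. Suppose $\pi(x)\ne\pi(y)$. The Peano compactum $\pi(K)$ is locally connected at $\pi(x)$ and at $\pi(y)$. I would choose small connected neighbourhoods $W_x,W_y$ of $\pi(x),\pi(y)$ with disjoint closures and pull them back to $U_x=\pi^{-1}(W_x)$ and $U_y=\pi^{-1}(W_y)$. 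Each $Q_n$ with $n$ large comes close to both $x$ and $y$.

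Two facts should now combine to give a contradiction. First, the $Q_n$ are distinct components of $K\cap Q$ that run from $I_1$ to $I_2$ and accumulate on $Q_\infty$. Second, in a Peano compactum only finitely many components have large diameter, and each component is locally connected. I would reduce to the planar situation: a marked quadrilateral embeds in the plane, and on $Q$ the argument is essentially that of \cite[Theorem 7]{LLY-2019}. Infinitely many disjoint "bands" crossing $Q$ between $I_1$ and $I_2$ force a failure of local connectedness, or of the null-sequence property, in the image. The reason is that $\pi(Q_n)$ must connect $\pi(U_x)$-near points to $\pi(U_y)$-near points, and the separation in $Q$ between consecutive $Q_n$ is preserved under $\pi$. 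This preservation holds because the elements of $\Dc$ meeting $Q$ away from $\partial Q\setminus(I_1\cup I_2)$ cannot jump across $Q_n$, since $K\cap\partial Q\subset I_1\cup I_2$. Here I would take the precise planar lemma from \cite{LLY-2019} as a model and verify that it only uses the quadrilateral.

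For the first half, the classes of $\sim$ are continua. A class that fails to be connected splits as $A\cup B$ with $A,B$ closed and disjoint. I would then coarsen the decomposition by passing to the components of the classes, show that this again gives a closed relation containing $R_K$, and thereby contradict minimality. The pairs in $R_K$ are linked through the continua $Q_\infty$, which lie inside one component of $K$, so this should work.

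The main part of the first half is that $K/\!\sim$ is a Peano compactum. I would establish a characterization: a compactum $X\subset\Sc$ is a Peano compactum if and only if $R_X$ is trivial. Given this, I would show that the image $K/\!\sim$ can be realized, or at least controlled, through Moore-type decompositions, so that a nontrivial Schönflies relation on the quotient would lift to a pair in $R_K$ not yet identified, which is impossible. To prove the characterization on a surface, I would cut $\Sc$ along finitely many simple closed curves into a finite union of closed disks and apply the planar result \cite[Theorem 3]{LLY-2019} on each disk. Marked quadrilaterals are local objects, so a failure of local connectedness or of the null-sequence property of $K$ localizes to one disk.

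I expect the main obstacle to be this step, specifically passing from $\sim$ to the quotient. The quotient is not embedded in $\Sc$, so the surface Schönflies characterization cannot be applied to it directly. I would try first to follow the planar proof in \cite{LLY-2019}: show directly that the quotient is locally connected and null-sequential by contradiction. A failure of either property produces a sequence of components of $\pi^{-1}$ of small neighbourhoods. These are continua in $K$, and one can then trap them in a small marked quadrilateral inside a coordinate disk to produce a pair in $R_K$ that would have to be identified, which is a contradiction.
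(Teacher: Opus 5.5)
Your three-part plan (classes are continua, the quotient is Peano, $\Dc_K$ refines every Peano decomposition) matches the paper's. However, the step you flag as the main obstacle does not go through as proposed. Suppose the quotient fails to be Peano, so that a closed neighbourhood $U_0$ in the quotient has components $Q_n$ converging to a nondegenerate continuum $Q_\infty$. Trapping the continua $\pi^{-1}(Q_n)$ in a small marked quadrilateral inside a coordinate disk does give a pair in $R_K$. To contradict the definition of $\sim$, though, the two points of that pair must lie in \emph{different} classes, and nothing local guarantees this. The fibre $\pi^{-1}(u)$ over a point $u\in Q_\infty$ is in general a large continuum, possibly with no planar neighbourhood at all, that can run through your little disk. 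So the pair you obtain may well lie in a single class, and there is no contradiction.

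What you actually need is a pair $(x_1,x_2)\in R_K$ with $x_1$ in a saturated neighbourhood $W_0$ of the \emph{whole} fibre $\pi^{-1}(u)$, and $x_2$ on the boundary of a larger neighbourhood $N$ that no element meeting $W_0$ reaches. Producing it requires $N$ to be planar. This is why the paper does the following:
\begin{enumerate}
\item It first proves the Counting Lemma (Corollary~\ref{counting_lemma}): by an Euler characteristic argument, only finitely many elements of an usc decomposition into subcontinua fail to be strictly planar.
\item It then picks $u\in Q_\infty$ with $\pi^{-1}(u)$ strictly planar, which is possible because $Q_\infty$ is uncountable.
\item It builds $N_0\subset W_0\subset N$ from brick-wall tilings so that ${\rm Int}(N)\setminus N_0$ is an elementary region.
\item It concludes with the elementary-region form of $R_K$ (Lemma~\ref{lem:good-R}).
\end{enumerate}
None of this appears in your sketch. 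Your other route, characterizing Peano compacta on $\Sc$ by triviality of $R_X$, cannot be applied to the quotient, as you note yourself.

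Two further steps rest on reasons that do not suffice.

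\emph{Connectedness of the classes.} Passing to components of the classes gives an usc decomposition, hence a closed relation; note that this refines rather than coarsens. To show this relation contains $R_K$, the fact that $Q_\infty$ lies in one component of $K$ is beside the point. You need a continuum joining $x$ to $y$ \emph{inside} $\Dc_K(x)$. The paper gets this in Lemma~\ref{connected_fiber_1} by proving $Q_\infty\subset\Dc_K(x)$. It shows that every $z\in Q_\infty\setminus\partial Q$ is $\overline{R_K}$-related to every $z_1\in Q_\infty\cap\partial Q$, using the annuli bounded by $\partial D_r(z)$ and $(\partial Q\setminus D_r(z_1))\cup\eta_r$.

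\emph{The refinement half.} You claim that elements of $\Dc$ cannot jump across $Q_n$ because $K\cap\partial Q\subset I_1\cup I_2$. This is false. $K$ may contain, outside $Q$, a comb joining the points of $Q_n\cap I_1$ for all $n$, so the $Q_n$ need not be separated even in $K$. The paper instead removes small disks $D_i\subset\pi^{-1}(B_i)$ around $x_1,x_2$ and applies the Trapping Lemma (Lemma~\ref{lem:trapping}). Since neighbouring components enter both disks, the piece of $Q_n$ through a ``middle'' point $y_n$ is a whole component of $K\setminus(D_1\cup D_2)$. Combined with the case split (infinitely many components of the quotient versus a single component $Y_0$), this shows that $Y_0$ is not locally connected at $\pi(y_\infty)$.
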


The following corollary, which could also be proved directly with less effort than Theorem~\ref{core}, is an immediate consequence of Theorem~\ref{core}. Recall that,  for $x\in K$, $\Dc_K(x)$ is the unique element of $\Dc_K$ containing $x$.

\begin{coro}\label{R_K_and_PC}
Let $K\subset \mathcal{S}$. Then the following assertions are equivalent.
\begin{itemize}
\item[(1)] $K$ is a Peano compactum. 
\item[(2)] $\Dc_K(x)=\{x\}$ for all $x\in K$.
\item[(3)] The relation $R_K$ is trivial, {\it i.e.}, $R_K=\{(x,x)\colon x\in K\}$.
\end{itemize}
\end{coro}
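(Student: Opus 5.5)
We argue along the cycle (1)$\Rightarrow$(2)$\Rightarrow$(3)$\Rightarrow$(1), using Theorem~\ref{core} for the first two implications and a direct argument for the third.

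\emph{(1)$\Rightarrow$(2).} Suppose $K$ is a Peano compactum. Then the trivial decomposition $\{\{x\}: x\in K\}$ is a Peano decomposition, because its quotient is $K$ itself. By Theorem~\ref{core}, $\Dc_K$ refines every Peano decomposition, so in particular it refines the trivial one. Hence every $\Dc_K(x)$ is a singleton.

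\emph{(2)$\Rightarrow$(3).} This is immediate from the definitions. We have $R_K\subset\,\sim_K$, and every class of $\sim_K$ is a singleton, so $(x,y)\in R_K$ forces $x=y$.

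\emph{(3)$\Rightarrow$(1).} Suppose $R_K$ is trivial. The diagonal of $K\times K$ is then a closed equivalence relation containing $R_K$. Since $\sim_K$ is the smallest such relation, $\sim_K$ is the identity. Therefore $\Dc_K$ is the trivial decomposition, and its quotient is homeomorphic to $K$ through the natural projection $\pi$. By Theorem~\ref{core}, $\Dc_K$ is a Peano decomposition, that is, its quotient is a Peano compactum. Consequently $K$ is a Peano compactum.

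With Theorem~\ref{core} available, no step is a real obstacle. The only point to check is that the quotient of the trivial decomposition is homeomorphic to $K$. This holds because $\pi$ is a continuous bijection from a compactum onto a Hausdorff space, hence a homeomorphism.

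A direct proof of (3)$\Rightarrow$(1) that avoids Theorem~\ref{core} would have to do the real work. One would argue by contraposition. If some component of $K$ fails to be locally connected, or if infinitely many components have diameter larger than some $\varepsilon>0$, then a convergence-continuum argument produces infinitely many components of $K\cap Q$. Here $Q$ is a small disk chart of $\Sc$ that we cut into a marked quadrilateral meeting $K$ only along $I_1\cup I_2$, and these components cross from $I_1$ to $I_2$. Their Hausdorff limit yields a pair $(x,y)\in R_K$ with $x\neq y$. The hard part of that route is choosing $I_1$ and $I_2$ so that condition (a) of Definition~\ref{R_K} holds. Since Theorem~\ref{core} may be assumed, we do not take this route.
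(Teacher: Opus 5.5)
Your proof is correct and follows the paper's approach: the paper states the corollary as an immediate consequence of Theorem~\ref{core} without writing out a proof. Your cycle supplies those details. (1)$\Rightarrow$(2) uses that $\Dc_K$ refines the trivial Peano decomposition, (2)$\Rightarrow$(3) uses $R_K\subset\,\sim_K$, and (3)$\Rightarrow$(1) uses that $\Dc_K$, now trivial, is a Peano decomposition.
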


In obtaining Theorem \ref{core}, the Sch\"onflies relation $R_K$ plays a significant role. It is used to establish the existence of the core decomposition $\Dc_K^{PC}$, and it characterizes the core decomposition. This characterization allows to construct  the elements of $\Dc_K^{PC}$ in terms of the fibers of $R_K$.  If one is just interested in the existence of $\Dc_K^{PC}$, there might be an alternative more direct approach which is outlined in Conjecture~\ref{conjecture}. However, we do not pursue this in the current paper.

The characterization of $\Dc_K^{PC}$ in terms of $R_K$ enables us to understand how the elements of $\Dc_K$ are mapped under branched coverings between closed surfaces. Given a branched covering $f:\Sc_1\rightarrow\Sc_2$ from a closed surface $\Sc_1$ onto another closed surface $\Sc_2$ and a compactum $K\subset\Sc_2$ we can establish connections between the atoms of $K$ and those of $f^{-1}(K)$. Those connections are of particular interest for the case $\Sc=\Sc_1=\Sc_2$, since we then have a dynamical system $(\Sc,f)$ such that there might exist interesting invariant domains $U$ with $U=f^{-1}(U)$. In this case, the boundary $\partial U$ is a compactum satisfying $\partial U=f^{-1}(\partial U)$. Among others, let us mention two typical choices of $f$. First, $f$ is an orientation-preserving homeomorphism of an orientable surface onto itself. See for instance \cite{Boyland94} for a survey about related works. Second, $f$ is a polynomial, considered as a self-map  of the extended complex plane onto itself. See for instance \cite{Milnor06}.

Our second main theorem reads as follows.

\begin{theo}\label{invariance}
Let $f:\Sc_1\rightarrow\Sc_2$ be a mapping from a closed surface $\Sc_1$ onto a closed surface $\Sc_2$, let $K\subset\Sc_2$  be a compactum, and let $\delta$ be an atom of
$f^{-1}(K)$.
\begin{itemize}
\item[(1)] If $f$ is a branched covering then $f(\delta)$ is a subset of an atom of $K$.
\item[(2)] If $f$ is a covering then $f(\delta)$ is an atom of $K$.
\end{itemize}
\end{theo}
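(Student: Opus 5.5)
We will prove both parts through the characterization $\Dc_K=\Dc_K^{PC}$ given by Theorem~\ref{core}. Write $L=f^{-1}(K)$, which is a compactum in $\Sc_1$.

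For part (1) we use the minimality of the core decomposition. Consider the decomposition $\Dc'$ of $L$ whose elements are the components of the sets $f^{-1}(A)\cap L$, with $A$ ranging over the atoms of $K$. Equivalently, $\Dc'$ is the decomposition into components of the fibers of $\pi_{\Dc_K}\circ f|_L$.

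We first show that $\Dc'$ is an usc decomposition into subcontinua. This follows from the monotone–light factorization of the continuous map $g=\pi_{\Dc_K}\circ f|_L\colon L\to K/\!\sim$. Then $L/\Dc'$ is the middle space of that factorization, and it maps by a light map $\ell$ onto $g(L)$, which is a closed subset of the Peano compactum $K/\!\sim$.

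Next we show that $L/\Dc'$ is a Peano compactum. Light maps do not preserve this property in general, so we use the local structure of $f$ instead. Away from the finitely many branch points, $f$ is a local homeomorphism, and $\ell$ is finite-to-one with at most $\deg f$ preimages per point. Using that $f$ is locally conjugate to $z\mapsto z^k$, we argue that each component of $L/\Dc'$ maps onto a component of $g(L)$ by a finite-to-one map that is locally a branched cover.

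Carrying this out directly is messy. A cleaner route goes through Corollary~\ref{R_K_and_PC} and the relation $R$. Suppose $(x,y)\in R_L$, witnessed by a marked quadrilateral $Q\subset\Sc_1$ with components $Q_n\to Q_\infty$. We show $f(x)\sim_K f(y)$. Subdivide $Q$ into small quadrilaterals on which $f$ is injective, or which contain a single branch point where $f$ is conjugate to $z^k$. On each such piece, $f$ carries a Schönflies witness to a Schönflies witness, or to a limit configuration of $K$, giving $R_K$-relations or relations in $\sim_K$ along $f(Q_\infty)$. Chaining these through the continuum $f(Q_\infty)$ gives $f(x)\sim_K f(y)$.

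Hence the relation $\{(x,y): f(x)\sim_K f(y)\}$ is a closed equivalence relation containing $R_L$. It therefore contains $\sim_L$, so $f(\delta)$ lies inside a single atom. This is part (1).

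\textbf{The main obstacle} is the local step at branch points and at pieces where $K$ meets $\partial(\text{piece})$ badly. The images of infinitely many components $Q_n$ under $f$ may merge. We handle this by choosing the small quadrilaterals so that their boundaries meet $K$ only in finitely many arcs, and by using the closedness of $\sim_K$ to pass to the limit.

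For part (2), $f$ is a covering of finite degree $d$. Each atom $A$ of $K$ satisfies $f(\delta)\subset A$ for the atoms $\delta\subset f^{-1}(A)$, and these atoms cover $f^{-1}(A)$. It remains to show equality. We do this by lifting: push the decomposition $\Dc_L$ forward to the decomposition $\Dc''$ of $K$ generated by the images $f(\delta)$. Using the evenly covered neighbourhoods, $R_K$-witnesses in small quadrilaterals lift to $R_L$-witnesses. Every Schönflies witness may be shrunk so that $Q$ lies in an evenly covered disk; this is the step where we use the existence of nested subquadrilaterals. Consequently $(u,v)\in R_K$ implies there are lifts $(\tilde u,\tilde v)\in R_L$. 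The relation "$u,v$ lie in a common set $f(\delta)$, chained" is then closed and contains $R_K$, so it contains $\sim_K$. Connectedness of $A$ together with the fact that the sets $f(\delta)$ partition $A$ into finitely many closed pieces that are pairwise disjoint or equal then forces $f(\delta)=A$.
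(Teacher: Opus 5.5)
Both parts have genuine gaps.

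\textbf{Part (1).} Your final reduction is sound: if $(x,y)\in R_L$ implies $f(x)\sim_K f(y)$, then $(f\times f)^{-1}(\sim_K)$ is a closed equivalence on $L$ containing $R_L$, hence containing $\sim_L$. But that implication is the entire content of (1), and the sketch does not establish it.

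Since $f$ is in general not injective on $Q$ (not even for a covering), you must localize, and localization destroys the witness. Any subdivision of $Q$ into small pieces uses cuts separating $I_1$ from $I_2$. Each such cut meets every $Q_n$, so the boundaries of the pieces meet $L$ in infinitely many components. Your remedy, boundaries meeting $K$ in finitely many arcs, therefore cannot be arranged.

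The natural local substitute is small annuli on which $f$ is injective, as in the proof of Lemma~\ref{connected_fiber_1}. This yields, for each $z\in Q_\infty$, a subcontinuum of $Q_\infty$ through $z$ whose image lies in one atom. Since $Q_\infty$ need not be locally connected, nothing in your sketch shows that these local continua chain together to join $f(x)$ to $f(y)$. The relation between $x$ and $y$ comes from the separation of the $Q_n$ across all of $Q$, and that is exactly what is lost when you localize. At branch points you note yourself that images of distinct $Q_n$ may merge, and invoking closedness of $\sim_K$ does not deal with this.

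The missing idea is a form of the Sch\"onflies relation that is insensitive to non-injectivity. The paper uses $\widetilde{R}$ (Definition~\ref{def:RTilde}), phrased via components of $L\setminus(U_1\cup U_2)$. Because $f|_L\colon L\to K$ is open and finite-to-one, every component of $L\setminus f^{-1}(D_1\cup D_2)$ maps \emph{onto} a component of $K\setminus(D_1\cup D_2)$, and each of the latter has only finitely many preimage components. So infinitely many crossing components upstairs give infinitely many downstairs (Lemma~\ref{into}). Combined with $R_L\subset\widetilde{R_L}$ (Proposition~\ref{prop:R_K_tilde}) and the fact that Peano decompositions split no fiber of $\widetilde{R_K}$ (Proposition~\ref{closure_R}), this proves (1) with no localization at all.

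\textbf{Part (2).} Lifting witnesses is the right idea, but no shrinking is needed. Shrinking is also impossible in general, since a witness for $(u,v)$ contains both points. What you need instead is that $Q$ is a closed disk, so $f^{-1}(Q)$ is a disjoint union of $\deg f$ homeomorphic copies of $Q$. This gives the stronger statement that for \emph{every} $\tilde u\in f^{-1}(u)$ there is $\tilde v\in R_L(\tilde u)$ over $v$ (Lemma~\ref{lem:fiber_onto}); you only record that some lifted pair exists.

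Your closing step then fails. Knowing that the sets $f(\delta)$ chain together to cover $A$ does not give $f(\delta)=A$. The assertion that these sets are finitely many and pairwise disjoint or equal is precisely what has to be proved. A priori $f^{-1}(A)$ may contain infinitely many atoms whose images overlap without coinciding, and for a non-regular covering there are no deck transformations to force this. Closedness of your chained relation is also asserted without argument.

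The paper instead passes from $R$ to $\overline{R}$ using openness of $f$ (Lemma~\ref{lem:fiber_onto_closure}). It then works with the relation $y_1\approx y_2$ iff the unions of atoms of $L$ meeting $f^{-1}(y_1)$ and $f^{-1}(y_2)$ coincide. The every-lift property shows that $\approx$ contains $R_K$, and openness of $f$ together with upper semicontinuity of $\Dc_L$ makes $\approx$ closed. By definition $[f(x)]_\approx\subset f([x]_{\sim_L})$, whence $A\subset f(\delta)$ (Lemma~\ref{lem:LYY_invariance_lemma}).
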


Since closed surfaces may be equipped with a complex structure, there are holomorphic maps $f:\Sc_1\rightarrow\Sc_2$ between certain closed surfaces. When $\Sc_1=\Sc_2=\Sc$, so that $(\Sc,f)$ is a holomorphic dynamical system, we may consider certain compacta $K\subset\Sc$ with $f^{-1}(K)=K$ and analyze the dynamics of the factor $(K,f)$. If $K$ is not a Peano compactum, we can use the core decomposition of $\Dc_K^{PC}$ to induce a factor $\left(\Dc_K^{PC},\overline{f}\right)$, whose dynamics may be well understood. See Example~\ref{torus_2} for a holomorphic map $f$ of the 2-torus $\bbT^2$ onto itself and a continuum $K\subset\bbT^2$  with $f^{-1}(K)=K$.

The paper is organized in the following way. In Section \ref{preliminary} we recall some necessary concepts and state useful results that are known or basic. In Section~\ref{Peano} we prove Theorem~\ref{core}. The main steps of the proof are contained in Proposition~\ref{connected_fiber} which states that $\Dc_K$ is an usc decomposition of $K$ into subcontinua,  in Proposition~\ref{PC_quotient} which says that $\Dc_K$ is even a Peano decomposition of $K$, and in Proposition \ref{pro:core_1} which states that $\Dc_K$ refines every Peano decomposition of $K$. Section~\ref{PD_basics} is devoted to the proof of Theorem \ref{invariance}. In Section~\ref{examples} we give examples and remarks that illustrate our results.  In Section~\ref{sec:ex2} we analyze a concrete continuum $K_0\subset\bbR^3$ and illustrate how Theorem~\ref{core} fails for $K_0$, by showing that $K_0$ allows uncountably many atomic decompositions with Peano quotient, hence, the core decomposition of $K_0$ does not exist.
We even determine the topology of each quotient space that can result from one of the possible atomic decompositions.

\section{Preliminaries}\label{preliminary}

In this section we recall some useful concepts and results that will be needed in the proofs of our main results. Moreover,  we establish two characterizations of the Sch\"onflies relation $R_K$, given in Definition \ref{R_K}. Let us start with two well-known results from point set topology.

\begin{lemm}[{Cut Wire Theorem, see {\em e.g.}~\cite[Theorem 5.2]{Nadler92}}]\label{lem:CWT}
Let  $A_1,A_2$ be closed subsets of a compactum $X$. If no connected subset of $X$ intersects both $A_1$ and $A_2$, then $X=X_1\cup X_2$ where $X_1$ and $X_2$ are disjoint closed subsets of $X$ with $A_i\subset X_i$ for $i\in\{1,2\}$.
\end{lemm}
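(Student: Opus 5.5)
The Cut Wire Theorem is a classical fact, and I would prove it through the quasi-component argument in a compactum. First the degenerate cases. If $A_1=\emptyset$, take $X_1=\emptyset$ and $X_2=X$, and similarly if $A_2=\emptyset$. So assume both are nonempty. The hypothesis gives $A_1\cap A_2=\emptyset$, since a single point is connected.

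The main step is to show that for each $a\in A_1$ there is a clopen set $U_a\subset X$ with $a\in U_a$ and $U_a\cap A_2=\emptyset$. Let $C_a$ be the component of $X$ containing $a$. By hypothesis $C_a\cap A_2=\emptyset$. In a compact Hausdorff space the component $C_a$ equals the quasi-component of $a$, that is, the intersection of all clopen sets containing $a$. I would recall the standard proof of this equality: the quasi-component $Q_a$ is closed. If $Q_a=F_1\cup F_2$ with $F_1,F_2$ disjoint, closed and $a\in F_1$, use normality to pick disjoint open sets $V_1\supset F_1$ and $V_2\supset F_2$. Compactness of the complement of $V_1\cup V_2$, together with the fact that clopen sets containing $a$ are closed under finite intersections, yields a single clopen $W\ni a$ with $W\subset V_1\cup V_2$. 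Then $W\cap V_1$ is clopen, contains $a$, and misses $F_2$. Hence $F_2=\emptyset$, so $Q_a$ is connected and equals $C_a$.

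Consequently $\bigcap\{W: W \text{ clopen},\ a\in W\}\cap A_2=\emptyset$. Since $A_2$ is compact and the family of clopen neighbourhoods of $a$ is closed under finite intersections, some single clopen $U_a\ni a$ misses $A_2$.

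To finish, note that the sets $U_a$ with $a\in A_1$ cover the compact set $A_1$, so finitely many of them, say $U_{a_1},\dots,U_{a_k}$, already cover it. Put $X_1=U_{a_1}\cup\dots\cup U_{a_k}$ and $X_2=X\setminus X_1$. Then $X_1$ is clopen, so both $X_1$ and $X_2$ are closed and disjoint. Also $A_1\subset X_1$, and $A_2\subset X_2$ because each $U_{a_i}$ misses $A_2$.

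The only nontrivial point is the equality of components and quasi-components in compacta; everything else is routine compactness. Alternatively, that equality could simply be cited from \cite{Nadler92}.
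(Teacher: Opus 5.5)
Your proof is correct and complete. The paper does not prove this lemma; it only cites Nadler's book, so there is no argument in the paper to compare yours with.

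Your route is the standard one. You show that components coincide with quasi-components in a compact Hausdorff space. One compactness argument over $A_2$ then gives a clopen $U_a\ni a$ missing $A_2$, and a second over $A_1$ lets you take $X_1$ to be a finite union of such sets. All the steps check out. In particular, $W\cap V_1=W\setminus V_2$ is clopen because $W\subset V_1\cup V_2$ and $V_1\cap V_2=\emptyset$. The two steps you leave implicit are immediate: $Q_a\subset W\cap V_1$ forces $F_2=\emptyset$, and $C_a\subset Q_a$ holds because a clopen set cannot split a connected set.

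Your argument never uses the metric. It also only needs that no \emph{component} of $X$ meets both $A_1$ and $A_2$. So it proves the statement for arbitrary compact Hausdorff spaces under this weaker hypothesis, which is also the form in which the lemma is applied later in the paper.
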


\begin{lemm}[{Torhorst Theorem, see {\em e.g.}~\cite[\S61, II, Theorem 4]{Kuratowski68}}]\label{lem:Torhorst}
Let  $M\subset\bbC$ be a locally connected continuum and let $R$ be a component of $\bbC\setminus M$. Then the following assertions hold.
\begin{itemize}
\item[(1)] The boundary $\partial R$ is a locally connected continuum.
\item[(2)] If $M$ has no cut point then $\partial R$ is a simple closed curve.
\end{itemize}
\end{lemm}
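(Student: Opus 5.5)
We work in $\widehat{\bbC}$ and deduce both parts from the Carath\'eodory extension theorem applied to a Riemann map of $R$. The main work is to show that $R$ is \emph{locally connected at its boundary}; the rest is fairly formal.

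\emph{Step 1: $R$ is simply connected and $\partial R$ is a continuum.} The set $\widehat{\bbC}\setminus R$ is the union of $M$ and the other components of $\widehat{\bbC}\setminus M$. Each of those components has its boundary in $M$, and $M$ is connected, so this union is connected. Hence $R$ is a simply connected domain. We may assume $M$ is nondegenerate, so $\partial R$ contains more than one point and there is a Riemann map $\phi:\bbD\to R$. Unicoherence of the sphere, applied to the connected sets $\overline R$ and $\widehat{\bbC}\setminus R$ whose union is $\widehat{\bbC}$, shows that $\partial R=\overline R\cap(\widehat{\bbC}\setminus R)$ is connected.

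\emph{Step 2 (main obstacle): $R$ is uniformly locally connected near $\partial R$.} We need: for every $\varepsilon>0$ there is $\delta>0$ such that any two points $p,q\in R$ with $|p-q|<\delta$ lie in a connected subset of $R$ of diameter $<\varepsilon$. To get $\delta$, use uniform local connectedness of the Peano continuum $M$: choose $\delta$ so that points of $M$ at distance $<\delta$ lie in a connected subset of $M$ of diameter $<\varepsilon/3$. Then argue by contradiction. Suppose $p,q$ cannot be joined in $R$ by a small set. Consider the segment $[p,q]$ and the component $W$ of $R\cap B(p,\varepsilon/2)$ containing $p$. A Janiszewski/Zoretti-type separation argument then produces points $a,b\in M\cap[p,q]$ such that every connected subset of $M$ joining $a$ to $b$ must leave $B(p,\varepsilon/3)$, contradicting the choice of $\delta$. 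Concretely, one takes the first and last points of $[p,q]$ on $\partial W$ and uses that a small continuum of $M$ joining them, together with subsegments of $[p,q]$, would separate $p$ from $q$ only inside a small disk. This is the step requiring care. By the standard criterion, this uniform local connectedness is equivalent to $\phi$ extending continuously to $\overline{\bbD}$. Then $\partial R=\phi(\partial\bbD)$ is a continuous image of a circle, hence a locally connected continuum. This proves (1).

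\emph{Step 3: part (2).} Let $\phi:\partial\bbD\to\partial R$ be the boundary extension from Step 2. It suffices to show $\phi$ is injective, since a continuous injection of a circle is an embedding. Suppose instead that $\phi(\zeta_1)=\phi(\zeta_2)=z$ with $\zeta_1\neq\zeta_2$. The image under $\phi$ of the radial segments to $\zeta_1$ and $\zeta_2$ is a simple closed curve $J\subset R\cup\{z\}$. The chord from $\zeta_1$ to $\zeta_2$ splits $\partial\bbD\setminus\{\zeta_1,\zeta_2\}$ into two open arcs. Their images contain points of $\partial R\subset M$ other than $z$, because $\phi$ is not constant on any arc (F.\ and M.\ Riesz). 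These images lie in the two different complementary components of $J$. Since $J\cap M=\{z\}$, the set $M\setminus\{z\}$ meets both components of $\widehat{\bbC}\setminus J$ and so is disconnected. Thus $z$ is a cut point of $M$, contradicting the hypothesis. Therefore $\phi|_{\partial\bbD}$ is injective and $\partial R$ is a simple closed curve.
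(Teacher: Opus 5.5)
Your Step 2 asserts something false, and the argument breaks there. A complementary domain of a locally connected continuum need not be uniformly locally connected. Take $M=[-1,1]$ and $R=\widehat{\bbC}\setminus M$. The points $p=\mathbf{i}t$ and $q=-\mathbf{i}t$ are $2t$ apart. Yet every connected subset of $R$ containing both must meet the real axis outside $[-1,1]$, so its diameter is bounded below independently of $t$. Accordingly, the separation argument you sketch cannot exist: here $[p,q]\cap M=\{0\}$, so there are no two points $a,b$ to play off against local connectedness of $M$. The criterion you invoke is also misquoted. Continuous extension of $\phi$ corresponds to local connectedness of $\partial R$ (Carath\'eodory), and in this example $\phi$ does extend continuously. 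Uniform local connectedness of $R$ is strictly stronger: it is equivalent to $\phi$ extending to a \emph{homeomorphism} $\overline{\bbD}\to\overline{R}$. So Step 2, as stated, would yield the conclusion of (2) with no hypothesis on cut points, which is impossible.

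The missing idea is that the small continua in $M$ must be used to close up short crosscuts of $R$, not to join points of a segment $[p,q]$. Fix $\zeta\in\partial\bbD$. By the length--area (Wolff) lemma there are $r_n\to0$ such that the crosscuts $C_n=\bbD\cap\{|z-\zeta|=r_n\}$ have images $\phi(C_n)$ of spherical length tending to $0$. Their end points $a_n,b_n$ lie in $\partial R\subset M$. Uniform local connectedness of $M$ then gives continua $B_n\subset M$ joining $a_n$ to $b_n$ with ${\rm diam}\,B_n\to0$.

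Now apply Janiszewski's theorem to $A_n=\overline{\phi(C_n)}\cup B_n$ and $E=\widehat{\bbC}\setminus R$. By your Step 1, $E$ is connected, so it separates no two points of $R$. The intersection $A_n\cap E=B_n$ is connected. The union $A_n\cup E$ separates $\phi(\bbD\cap\{|z-\zeta|<r_n\})$ from $\phi(0)$, because its complement is $\phi(\bbD\setminus C_n)$. Hence the small continuum $A_n$, which stays away from $\phi(0)$, already separates them. Therefore $\phi(\bbD\cap\{|z-\zeta|<r_n\})$ lies in a small disc around $A_n$, and this gives the continuous extension.

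With this replacement, your Steps 1 and 3 go through as written, and Step 3 is a correct use of the no-cut-point hypothesis. Your reduction to nondegenerate $M$ is harmless for (1). For (2), however, it is a hypothesis the statement genuinely needs, since a one-point $M$ has no cut point. The paper itself gives no proof of this lemma, only a citation to Kuratowski, whose argument is purely topological.
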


We also use special tilings of $\bbC$, when discussing the topology of planar comapcta.

\begin{Deff}[Brick-wall tiling]\label{brick-wall}
Let $\varepsilon>0$ and set $T_\varepsilon=\{x+y\mathbf{i}\colon 0\le x,y\le \varepsilon\}$. Call
$\displaystyle \Tc_\varepsilon=\left\{T_\varepsilon+m\varepsilon+n\left(\frac{\varepsilon}{2}+\varepsilon\textbf{i}\right):\ m,n\in\bbZ\right\} $
a  {\it brick-wall tiling} of size $\varepsilon$.
\end{Deff}
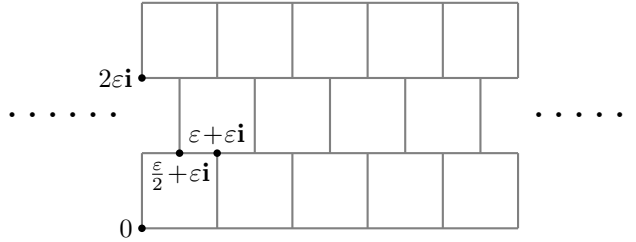
\begin{figure}[ht]
\vspace{-0.382cm}
\begin{center}
\begin{tikzpicture}[x=1cm,y=1cm,scale=1.0]
\foreach \p in {0,...,3}
{
\draw[gray,thick] (0,\p) -- (5,\p);
}
\foreach \p in {0,2}
{\foreach \q in {0,1,...,5}
{\draw[gray,thick] (\q,\p) -- (\q,\p+1);
}
}
\foreach \q in {1,...,5}
{\draw[gray,thick] (\q-0.5,1) -- (\q-0.5,2);
}
\fill[black] (0,0) circle(0.05)node[left]{$0$};
\fill[black] (0,2) circle(0.05)node[left]{$2\varepsilon\textbf{i}$};

\fill[black] (1,1) circle(0.05)node[above]{$\varepsilon\!+\!\varepsilon\textbf{i}$};
\fill[black] (0.5,1) circle(0.05)node[below]{$\frac{\varepsilon}{2}\!+\!\varepsilon\textbf{i}$};
\node[black] at (-1,1.5) {\huge$\cdots\cdots$};
\node[black] at (6,1.5) {\huge$\cdots\cdots$};
\end{tikzpicture}
\end{center}\vskip -0.25cm
\caption{A patch of the brick-wall tiling $\Tc_\varepsilon$.}\label{pic_brick-wall}
\vskip -0.25cm
\end{figure}

\begin{lemm}\label{lem:arcalphaSquare}
Let $\Qc=(Q,I_0,I_1)$ be a quadrilateral and let $K\subset Q$ be a compactum. If $K$ has $n\ge2$ components, say $P_1,\ldots, P_n$, each of which intersects both $I_0$ and $I_1$,  then there exist disjoint arcs $\alpha_1,\ldots, \alpha_{n-1}:[0,1]\to Q\setminus K$ with $\alpha_i(k)\in I_k$ $(k=0,1)$ and $\alpha_i((0,1)) \in \mathrm{int}(Q)$,
such that  $P_i$ and $P_j$ with $i\ne j$are contained in different components of $Q\setminus(\alpha_1\cup\cdots\cup\alpha_{n-1})$.
\end{lemm}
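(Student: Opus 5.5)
We work in a model: fix a homeomorphism $h\colon Q\to[0,1]^2$ with $h(I_0)\subset\{0\}\times[0,1]$ and $h(I_1)\subset\{1\}\times[0,1]$. We may do this after shrinking $I_0,I_1$ slightly, or by choosing $h$ so that $I_0,I_1$ become subarcs of the left and right sides. From now on we assume $Q=[0,1]^2$. Every $P_i$ is a continuum joining the left side $L=\{0\}\times[0,1]$ to the right side $R=\{1\}\times[0,1]$. The plan is to separate the $P_i$ one pair at a time, producing a crosscut of $Q$ from $I_0$ to $I_1$ between consecutive components.

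\emph{Step 1: separate one component from the rest.} Set $A=P_1$ and $B=K\setminus P_1=P_2\cup\cdots\cup P_n$; both are disjoint compacta. The key claim is that whenever $K$ has two components meeting both $I_0$ and $I_1$, there is an arc $\alpha\subset Q\setminus K$ from $I_0$ to $I_1$, with interior in $\mathrm{int}(Q)$, that separates $P_i$ from $P_j$ in $Q$. To prove it we look at $X=Q\setminus K$ near the two components.

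First use the Cut Wire Theorem (Lemma~\ref{lem:CWT}) inside the compactum $K$ to split $K=X_1\cup X_2$ into disjoint closed sets with $P_i\subset X_1$ and $P_j\subset X_2$. This is possible because no connected subset of $K$ meets both $P_i$ and $P_j$, as they are different components. Next take $\varepsilon<\frac13\mathrm{dist}(X_1,X_2)$ and the brick-wall tiling $\Tc_\varepsilon$ (Definition~\ref{brick-wall}), restricted to $Q$. Let $U_1$ be the union of tiles meeting $X_1$. Because any two bricks meeting at a point share an edge, the union is a finite union of closed tiles whose boundary is a finite disjoint union of simple closed curves and arcs, with no pinch points. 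Moreover $U_1\cap X_2=\emptyset$.

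A suitable boundary arc of $U_1$ inside $Q$ then runs from $L$ to $R$, avoids $K$, and separates $P_i$ from $P_j$. To see this, note that $P_i$ joins $L$ to $R$, so the component of $U_1$ containing $P_i$ is a "horizontal band". Its upper or lower frontier in $Q$ (on the side facing $P_j$) is a polygonal arc from $L$ to $R$ lying in $\mathrm{int}(Q)$ except at its endpoints, and it misses $K$. Finally we adjust the endpoints so they lie in $I_0$ and $I_1$. This works because $P_i,P_j$ meet $I_0$, $I_1$, and the portion of $L$ between them is contained in the arc $I_0$, since $K\cap\partial Q\subset I_0\cup I_1$ in the intended application. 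If necessary we first shrink $L,R$ to $I_0,I_1$ via $h$.

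\emph{Step 2: ordering and induction.} In $Q=[0,1]^2$ the disjoint continua $P_1,\dots,P_n$, each joining $L$ to $R$, are linearly ordered "bottom to top". Indeed, by the Step 1 claim any crosscut separating $P_i$ from $P_j$ places one in the lower and one in the upper complementary component. After relabeling, $P_1<\cdots<P_n$. We then apply Step 1 to each consecutive pair $(P_k,P_{k+1})$ to obtain arcs $\alpha_k$, choosing them inductively and disjointly. To construct $\alpha_k$, work in the closure of the component $D_k$ of $Q\setminus\alpha_{k-1}$ lying above $\alpha_{k-1}$. This closure is again a closed disk, with two marked boundary arcs taken from $I_0$ and $I_1$, and it contains $P_k,\dots,P_n$. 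Taking $\varepsilon$ also smaller than $\mathrm{dist}(\alpha_{k-1},K)$ keeps the new arc disjoint from $\alpha_{k-1}$. Once all $n-1$ arcs are chosen, the complementary components of $Q\setminus(\alpha_1\cup\dots\cup\alpha_{n-1})$ are the $n$ strips between consecutive arcs. Each contains exactly one $P_k$, which gives the conclusion.

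The main obstacle is Step 1: extracting from the tile boundary an arc that both joins $I_0$ to $I_1$ and genuinely separates $P_i$ from $P_j$. I would handle this with the standard planar argument. Add an outer arc $\gamma$ outside $Q$ from $L$ to $R$, so that $P_i\cup\gamma$ and $P_j\cup\gamma$ become continua. The boundary component of $U_1$ that separates $P_i$ from $P_j$ exists by Janiszewski/Alexander-type separation. It has no cut points, so by the Torhorst Theorem (Lemma~\ref{lem:Torhorst}) it is a simple closed curve. Its intersection with $Q$ is then the required arc.
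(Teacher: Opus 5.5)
Your proposal follows essentially the same route as the paper. You split $K$ via the Cut Wire Theorem, take the component $C$ containing $P_i$ of the union of brick-wall tiles of size $<\frac13\mathrm{dist}(X_1,X_2)$ meeting $X_1$, use as crosscut the part of $\partial C$ facing $P_j$ (the paper obtains it as $\partial U_1\cap\partial W_1$ via the Torhorst Theorem), and iterate; your bottom-to-top ordering of the $P_k$ is only a bookkeeping variant of the paper's recursive splitting into two sub-quadrilaterals. When writing it up, take the crosscut to be the frontier in $Q$ of the component of $Q\setminus C$ containing $P_j$, not ``the intersection with $Q$'' of a boundary curve of $U_1$, which may consist of several arcs. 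Its endpoints then lie one in $I_0$ and one in $I_1$ automatically, because each side of a crosscut separating $P_i$ from $P_j$ meets both $I_0$ and $I_1$, so the extra hypothesis $K\cap\partial Q\subset I_0\cup I_1$ you invoke is not needed.
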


\begin{proof}
W.l.o.g.\ we may assume that $Q=[0,1]^2$ with $I_k=[0,1]\times\{k\}$ $(k\in\{0,1\})$.
Because $P_1,P_2$ are distinct components of $K$, by \cite[\S47, III, Theorem~3]{Kuratowski68} we can write $K=R_1\cup R_2$ with disjoint compact sets $R_1$ and $R_2$ satisfying $P_i\subset R_i$ $(i\in\{1,2\})$.  The brick-wall tiling $\mathcal{T}_\varepsilon$ of size $\varepsilon<\frac13{\rm dist}(R_1,R_2)$ induces a tiling of $[0,1]^2$, denoted by $\mathcal{T}_\varepsilon^\#$, each of whose elements is the intersection of $[0,1]^2$ with an element of $\mathcal{T}_\varepsilon$. Let $R_i^\#$ be the union of all the elements of $\mathcal{T}_\varepsilon^\#$ that intersect $R_i$. Let $P_1^\#$ be the component of $R_1^\#$ that contains $P_1$. Then $P_2$ is contained in the unbounded complementary component $U_1$ of $P_1^\#$. Since $P_1^\#$ is a locally connected continuum with no cut point, by Lemma~\ref{lem:Torhorst} we know that $\partial U_1$ is a Jordan curve. Let $W_1$ be the component of $[0,1]^2\setminus P_1^\#$ that contains $P_2$. Then the closure $\overline{W_1}$ is a locally connected continuum with no cut point, too. It follows that $\partial W_1$ is also a Jordan curve while the union of $\partial U_1$ and $\partial W_1$ is a $\theta$-curve \cite[p.~104]{Whyburn42}. Let $\alpha_1:[0,1]\rightarrow Q$ be an arc with $\alpha([0,1])=\partial U_1\cap\partial W_1$ such that $\alpha_1(k)\in I_k$ and $\alpha_1((0,1))\subset\left((0,1)^2\setminus K\right)$. Then $[0,1]^2\setminus\alpha_1$ consists of two Jordan domains, say $D_1$ and $D_2$, such that $\alpha_1=\partial D_1\cap\partial D_2$. W.l.o.g., we may assume that $P_i\subset\overline{D_i}$. Set $Q_i=\overline{D_i}$ and $K_i=K\cap Q_i$. Let $I_k^i=Q_i\cap I_k$ for $i\in\{1,2\}$ and $k\in\{0,1\}$. Then both $\Qc_1=(Q_1,I_0^1,I_1^1)$ and $\Qc_2=(Q_2,I_0^2, I_1^2)$ are marked quadrilaterals and $P_i$ is a component of $K_i$ for $i=1,2$. If $n=2$ we are done. Otherwise, at least one of the sets $K_1$ or $K_2$ has at least $2$ components each of which intersects both $I_0^1$ and $I_1^1$ or both $I_0^2$ and $I_1^2$,  respectively. Either way we can find an arc $\alpha_2:[0,1]\rightarrow[0,1]^2\setminus K$ with $\alpha_2(k)\in I_k$ and $\alpha_2((0,1))\subset(0,1)^2\setminus K$ that separates two of those $P_j$'s that are contained in $Q_1$ or $Q_2$. Repeating the same argument, when necessary, we will find $n-1$ arcs $\alpha_1,\ldots,\alpha_{n-1}$  that are needed.
\end{proof}

The following two lemmas are consequences of Lemma \ref{lem:arcalphaSquare}.
For the sake of convenience, hereafter in this paper we identify  $\bbR^2$ with $\bbC$ to represent the Euclidean plane. Thus $(u,v)=u+v\textbf{i}$ for all $u,v\in\bbR$ and  $I_1\times I_2=\{u+v\textbf{i}: u\in I_1, v\in I_2\}$ for any subsets $I_1,I_2\subset\bbR$.

\begin{lemm}\label{lem:arcalpha}
Let $A=\{z\in\bbC: 1\le|z|\le2\}$. If a closed set  $K\subset A$ has $n\ge 2$  components $P_1,\ldots, P_n$ each of which intersects both components of $\partial A$,  then there exist disjoint arcs $\alpha_1,\ldots, \alpha_n: [0,1]\rightarrow A\setminus K$ with $\alpha_i((0,1))\subset {\rm Int}(A)$ such that  $P_i$ and $P_j$ with $i\ne j$ are contained in different components of $A\setminus(\alpha_1\cup\cdots\cup\alpha_{n})$.
\end{lemm}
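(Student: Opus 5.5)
We reduce to Lemma~\ref{lem:arcalphaSquare} by cutting the annulus $A$ along a radial segment that avoids $K$, which turns $A$ into a marked quadrilateral.

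The first step is to find such a cut. Choose an arc $\beta\subset A\setminus K$ joining the two boundary circles of $A$. Such an arc exists because $K$ has $n\ge2$ components crossing $A$. Apply the Cut Wire Theorem (Lemma~\ref{lem:CWT}), or \cite[\S47, III, Theorem~3]{Kuratowski68} as in the proof of Lemma~\ref{lem:arcalphaSquare}, to write $K=R_1\cup R_2$ with disjoint compact sets $R_i\supset P_i$. Then take a brick-wall (or polar-grid) neighborhood of $R_1$ of mesh smaller than $\frac13{\rm dist}(R_1,R_2)$. The component containing $P_1$ is a locally connected continuum with no cut point. By the Torhorst Theorem (Lemma~\ref{lem:Torhorst}), the boundary of the complementary region of $A$ that contains $P_2$ is a Jordan curve. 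This Jordan curve contains an arc $\beta$ in ${\rm Int}(A)\setminus K$, except for its endpoints, and these endpoints lie one on each boundary circle.

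The second step is the cut itself. Apply a homeomorphism of $A$ straightening $\beta$, or directly use the fact that $A$ cut along $\beta$ is a closed disk. This turns $A$ into a marked quadrilateral $\Qc=(Q,I_0,I_1)$. Here $I_0$ and $I_1$ are the images of the inner and outer circles, and the two remaining sides are the two copies of $\beta$. Since $\beta\cap K=\emptyset$, the set $K$ lifts homeomorphically into $Q$. Its components are still $P_1,\ldots,P_n$, each meeting $I_0$ and $I_1$. We take $\alpha_n=\beta$.

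The third step applies Lemma~\ref{lem:arcalphaSquare} to $\Qc$ and $K$. This gives $n-1$ disjoint arcs $\alpha_1,\ldots,\alpha_{n-1}$ in $Q\setminus K$, with endpoints on $I_0$ and $I_1$ and interiors in ${\rm Int}(Q)$, separating the $P_i$ pairwise in $Q$. Because their interiors avoid the two copies of $\beta$, they project to disjoint arcs in $A\setminus K$ with interiors in ${\rm Int}(A)$, disjoint from $\alpha_n$. The components of $A\setminus(\alpha_1\cup\cdots\cup\alpha_n)$ correspond exactly to the components of $Q\setminus(\alpha_1\cup\cdots\cup\alpha_{n-1})$ with the $\beta$-sides removed, so the $P_i$ lie in pairwise different components.

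The main obstacle is the first step, producing $\beta$ cleanly: it must be an arc disjoint from $K$ with interior in ${\rm Int}(A)$ and endpoints on different boundary circles. If the Torhorst approach is awkward near $\partial A$, I would instead pass to the universal cover strip $\bbR\times[1,2]$. There I would apply Lemma~\ref{lem:arcalphaSquare} to a large rectangle containing two lifts of $P_1$ and one lift of $P_2$, and project a separating arc. Some care is needed to ensure this arc projects injectively, for instance by choosing the rectangle's width less than one period.
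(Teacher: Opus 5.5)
Your proposal is correct and follows the paper's proof: cut $A$ along an arc in $A\setminus K$ joining the two boundary circles, apply Lemma~\ref{lem:arcalphaSquare} to the resulting marked quadrilateral, and use the cut arc as $\alpha_n$. The one difference is how you find the cut arc. The paper uses Kuratowski's separation theorems to see that $K$ does not separate $0$ from $3$, and takes a subarc of a path between them. You instead extract it from the boundary of the component $P_1^\#$ of a brick-wall neighbourhood of $R_1$, which also works because the relative boundary of $P_1^\#$ in $A$, including the endpoints of your arc, misses $K$. Your universal-cover fallback is not needed, and as stated it could not work, because any rectangle containing two lifts of $P_1$ is wider than one period.
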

\begin{proof}
Let $I_0$ and $I_1$ be the components of $\partial A$. Because $n\ge 2$, by \cite[\S49, V, Theorem~3]{Kuratowski68} and \cite[\S57, II, Theorem~5]{Kuratowski68} we know that $K$ does not separate between $0$ and $3$.
So we can find  an arc $\beta \subset \bbC\setminus K$ connecting $0$ and $3$. Let $\alpha:[0,1]\to A\setminus K$ be a subarc of $\beta$ satisfying $\alpha(k)\in I_k$ $(k\in\{0,1\})$ and $\alpha((0,1)) \in \mathrm{int}(A)$. We can now cut $A$ along $\alpha$ (actually a tubular neighborhood of $\alpha$). By this cut, we obtain  a quadrilateral $\Qc$ with components satisfying the assumptions of Lemma~\ref{lem:arcalphaSquare}. The result now follows by applying Lemma~\ref{lem:arcalphaSquare} to this constellation.
\end{proof}

\begin{lemm}[{\bf Trapping Lemma}]\label{lem:trapping}
Let $P_0,P_1,P_2$ be distinct components of a compactum $K\subset(0,1)\times[0,1]$ satisfying the following requirements. 
\begin{itemize}
\item[(1)] $P_j\cap ((0,1)\times\{i\})\not=\emptyset$  for each $i\in\{0,1\}$ and each $j\in\{0,1,2\}$.
\item[(2)] $P_j\subset V_j$ for each $j\in\{0,1\}$, where $V_j$ is the component of $[0,1]^2\setminus P_2$ containing $\{j\}\times[0,1]$.
\item[(3)] There exist $x_i \in P_0\cap((0,1)\times\{i\})$ and $r\in(0,\frac12)$ such that $D_r(x_i)\cap P_1\ne\emptyset$ for each $i\in\{0,1\}$.
\end{itemize}
Then each component of the space $P_2\setminus\left(D_r(x_0)\cup D_r(x_1)\right)$ is a component of the space $K\setminus\left(D_r(x_0)\cup D_r(x_1)\right)$.
\end{lemm}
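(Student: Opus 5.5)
We need to show that every component $C$ of $P_2\setminus(D_r(x_0)\cup D_r(x_1))$ is a component of $K\setminus(D_r(x_0)\cup D_r(x_1))$. Write $D=D_r(x_0)\cup D_r(x_1)$ and $X=K\setminus D$. Let $M$ be the component of $X$ containing $C$. Since $C$ is a connected subset of $X$ we have $C\subset M$. The whole argument comes down to showing $M\subset P_2$. Once that holds, $M$ is a connected subset of $P_2\setminus D$ containing $C$, so maximality of $C$ gives $C=M$.

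The inclusion $M\subset P_2$ is where the trapping enters. The disks $D_r(x_0)$ and $D_r(x_1)$ centred on the bottom and top edges are removed, and since $r<\tfrac12$ they are disjoint. Each disk meets both $P_0$ (at its centre) and $P_1$ (by (3)). Since $P_0\subset V_0$ and $P_1\subset V_1$ lie on opposite sides of $P_2$, each disk $D_r(x_i)$ meets both $V_0$ and $V_1$. I therefore expect $P_2$ to cross each disk.

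I would argue by contradiction. Suppose $M\not\subset P_2$. Then $M\cap P_2=M\cap(P_2\setminus D)\supset C$ is nonempty and $M\setminus P_2$ is nonempty. The set $M$ is a continuum inside $K$, and $K$ is disjoint from $D$ only along $M$. Because $M\subset K$ and $P_2$ is a component of $K$, the connected set $M$ meeting $P_2$ would have to satisfy $M\subset P_2$. This is immediate: $M$ is a connected subset of $K$ intersecting the component $P_2$, so $M\subset P_2$.

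So the lemma reduces to the elementary fact that a connected subset of a space meeting a component lies in that component. Hypotheses (1)--(3) would then seem superfluous, which makes me suspect the intended statement is about components of $P_2\setminus D$ versus components of the larger set $K\setminus D$ in a stronger sense. One candidate is that hypotheses (1)--(3) are meant for a variant where $K$ is replaced by a limit or neighbouring continuum. Another is that the intended claim is that these components are also open-closed relative to nearby pieces.

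Nevertheless, for the statement as written I would present the short argument above and check its only subtle point. That point is that $M$ is connected in the subspace $K\setminus D$, hence connected in $K$, hence contained in the component of $K$ through any of its points, namely $P_2$.
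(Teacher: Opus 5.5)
Your argument is correct and complete for the statement as written. The component $M$ of $K\setminus D$ containing $C$ is a connected subset of $K$ that meets the component $P_2$. Hence $M\subset P_2\setminus D$, and maximality of $C$ gives $M=C$. You are also right that hypotheses (1)--(3) are never used.

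This is a genuinely different route from the paper's. The paper uses Lemma~\ref{lem:arcalphaSquare} to get arcs $\alpha_1,\alpha_2\subset(0,1)^2\setminus K$ that cut the square into Jordan domains $W_0,W_1,W_2$ with $P_j\subset\overline{W_j}$. It then picks $z_i\in P_1$ with $|z_i-x_i|<r$. Finally it traps each component $R$ in a complementary domain $U_R$ of $\alpha_1\cup\alpha_2\cup\overline{z_0x_0}\cup\overline{z_1x_1}$ whose boundary meets $K$ only inside $D$.

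That trapping is where (1)--(3) and planarity enter. What it is meant to buy is a \emph{localized} conclusion, insensitive to what $K$ does outside the square. This is how the lemma is invoked in Proposition~\ref{pro:core_1}: there the $Q_n$ are only components of $K\cap Q$, not of the ambient compactum, so your argument does not transfer. By contrast, your argument is exactly what is needed where the lemma is cited in Proposition~\ref{closure_R}. There $Q_{k,n}$ is a component of $K\setminus(U_{x,n}\cup U_{y,n})$ and $U_{x,n}\cup U_{y,n}\subset\pi^{-1}(V_x)\cup\pi^{-1}(V_y)$.

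The paper's trapping step also needs care when $R$ reaches the top or bottom edge outside $D$, which (1)--(3) do not exclude. In that case no domain $U_R$ with $\overline{U_R}\subset(0,1)^2$ can contain $R$. So the paper's route is heavier and not airtight as written, while yours is airtight but proves only the literal statement.

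In a final write-up, drop the exploratory middle paragraphs and the contradiction framing; the sentence ``$K$ is disjoint from $D$ only along $M$'' is garbled. The proof is just your first and last steps.
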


\begin{proof}
By condition (3) we can pick $z_0,z_1\in P_1$ with $|x_i-z_i|<r$. By Lemma \ref{lem:arcalphaSquare}, we can find two open arcs $\alpha_1,\alpha_2\subset (0,1)^2\setminus K$  each of which connects a point in $(0,1)\times\{1\}$ to one in $(0,1)\times\{0\}$  such that $(0,1)^2\setminus(\alpha_1\cup\alpha_2)$ consists of three Jordan domains $W_j$   $(0\le j\le 2)$ satisfying $P_j\subset \overline{W_j}$. See Figure \ref{trapping}, in which we represent $W_2$  by a shaded area.
\begin{figure}[ht]
     \includegraphics[trim = 0 10 0 10, width=0.618\textwidth]{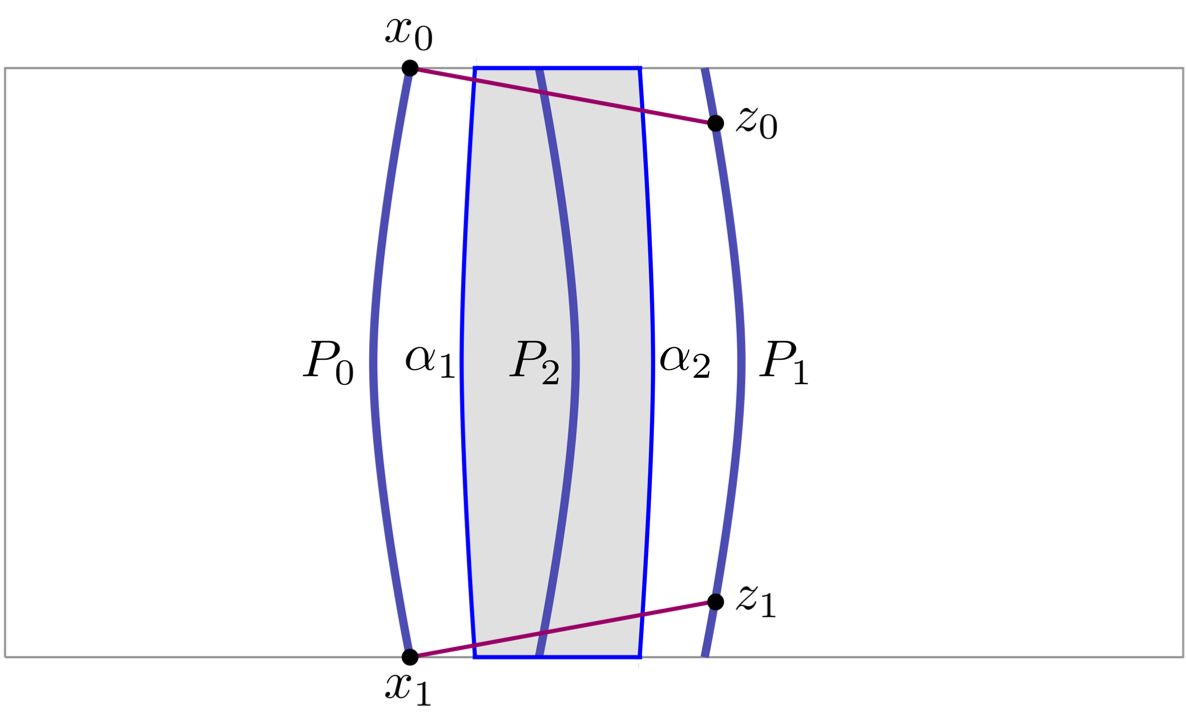} \caption{Relative locations of $x_0,x_1$  and $P_0,P_1,P_2$.\label{trapping}}
\end{figure}
Clearly, $\overline{W_2}\cap(P_0\cup P_1)=\emptyset$.
Now, for any component $R$ of $P_2\setminus\left(D_r(x_0)\cup D_r(x_1)\right)$  we can find a component $U_R$ of $(0,1)^2\setminus\left(\alpha_1\cup\alpha_2\cup\overline{z_0x_0}\cup\overline{z_1x_1}\right)$
that contains $R$.  Then $\overline{U_R}\subset(0,1)^2$ and $(K\cap \partial U_R) \subset \overline{z_0x_0}\cup\overline{z_1x_1}
\subset D_r(x_0)\cup D_r(x_1)$. It follows that $R$ is also a component of $K\setminus\left(D_r(x_0)\cup D_r(x_1)\right)$.
\end{proof}

Now we obtain two characterizations of $R_K$. The first one is the following.

\begin{lemm}\label{lem:Rk=RkQ}
Let $K$ be a compactum on a closed surface $\Sc$ and $R_K$ the Sch\"onflies relation. Then two distinct elements $x,y\in K$ are related under $R_K$ if and only if one can find
a closed annulus $A\subset S$ bounded by two closed arcs $J_1$ and $J_2$ satisfying the following three conditions.
\begin{itemize}
\item[(a)] $x\in J_1$ and $y\in J_2$.
\item[(b)]  $K\cap A$ has infinitely many components $P_n$ intersecting $J_1$ and $J_2$.
\item[(c)] $(P_n)_{n\ge1}$ converges w.r.t.\ Hausdorff distance to a continuum $P_\infty$ satisfying $x,y\in P_\infty$.
\end{itemize}
\end{lemm}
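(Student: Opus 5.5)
The plan is to prove the two implications separately and to pass between the two settings by cutting and gluing along arcs that avoid $K$; Lemma~\ref{lem:arcalphaSquare} and Lemma~\ref{lem:arcalpha} supply such arcs. Throughout, $J_1$ and $J_2$ are read as the two boundary curves of $A$, so that $K\cap\partial A\subset J_1\cup J_2$ holds automatically. This is the analogue of condition (a) in Definition~\ref{R_K}.

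\emph{From an annulus to a quadrilateral.} Suppose $A$, $J_1$, $J_2$ and the components $P_n$ are as in the statement. Applying Lemma~\ref{lem:arcalpha} to two of the $P_n$ gives an arc $\alpha\subset A\setminus K$ with one endpoint on each $J_i$ and interior in ${\rm Int}(A)$. By Lemma~\ref{lem:arcalpha} applied to more of the $P_n$, there are many pairwise disjoint such arcs, so $\alpha$ can be chosen with endpoints different from $x$ and $y$. Cutting $A$ along a thin closed tubular neighbourhood of $\alpha$ that still misses $K$ yields a closed disk $Q$. Its boundary consists of two subarcs $I_1\subset J_1$ and $I_2\subset J_2$ together with two arcs parallel to $\alpha$.

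Then $x\in I_1$, $y\in I_2$, and $K\cap\partial Q\subset I_1\cup I_2$. The removed neighbourhood misses $K$, so every $P_n$ lies in $Q$ and is still a component of $K\cap Q$, meeting both $I_1$ and $I_2$. Hence (a), (b) and (c) of Definition~\ref{R_K} hold with $Q_n=P_n$ and $Q_\infty=P_\infty$, and $(x,y)\in R_K$.

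\emph{From a quadrilateral to an annulus.} Let $(Q,I_1,I_2)$ witness $(x,y)\in R_K$. First I would normalise $Q$. Using Lemma~\ref{lem:arcalphaSquare}, pick two arcs $\beta_1,\beta_2\subset Q\setminus K$ joining $I_1$ to $I_2$ that cut off a sub-quadrilateral $Q'$ with the following properties: $Q'$ contains all but finitely many $Q_n$ (and hence $Q_\infty$), and its marked sides $I_i'=Q'\cap I_i$ still contain $x$ and $y$. This is possible because only finitely many $Q_n$ are separated from $Q_\infty$ by arcs close to the sides of $Q$. Next I would build the annulus from a thin closed collar $N$ of $\partial Q'$ in $\mathcal S$, that is, a regular neighbourhood of a tame Jordan curve, which is an annulus. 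Along $\beta_1$ and $\beta_2$, where $\partial Q'$ misses $K$, I would make $N$ so thin that it avoids $K$. I would then take $A=Q'\setminus{\rm int}(D)$, where $D$ is a small closed disk placed inside the region between consecutive components, and re-route boundary curves so that one curve runs through $I_1'$ and the other through $I_2'$.

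\emph{Main obstacle.} The hard step is producing the annulus $A$ with $x\in J_1$, $y\in J_2$ and $K\cap A$ exactly as required, without picking up extra pieces of $K$ from outside $Q$ near the corners of $Q'$. What I would try first is this: take two disjoint closed disks $E_1\supset I_1'$ and $E_2\supset I_2'$, thin neighbourhoods of the marked sides with $x\in\partial E_1$ and $y\in\partial E_2$. Then join them through $Q'$ by the strip bounded by $\beta_1$ and $\beta_2$. Finally, identify $A$ with the closure of the region between $\partial E_1$ and $\partial E_2$ inside a disk neighbourhood of $Q'$, and check via Lemma~\ref{lem:CWT} that the $Q_n$ remain distinct components crossing from $J_1$ to $J_2$.
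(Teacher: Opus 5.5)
Your direction ``annulus $\Rightarrow$ quadrilateral'' is correct and essentially matches the paper. The paper cuts $A$ along two disjoint crossing arcs from Lemma~\ref{lem:arcalpha} and keeps the piece containing $P_\infty$; you remove a thin $K$-free neighbourhood of one such arc. Since the endpoints of $\alpha$ are not in $K$, the points $x,y$ stay on $I_1,I_2$ automatically, so you do not need extra arcs for that.

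The converse direction is not established. None of your constructions gives an annulus whose \emph{two different} boundary curves contain $x$ and $y$:
\begin{itemize}
\item $Q'\setminus{\rm int}(D)$ is an annulus bounded by $\partial Q'$ and $\partial D$, so $x$ and $y$ both lie on $\partial Q'$. ``Re-routing'' boundary curves is not an operation that can change this.
\item In your last paragraph, $E_1$ cannot be a neighbourhood of $I_1'\ni x$ and also have $x\in\partial E_1$.
\item If instead $E_1,E_2$ are disks attached to $Q'$ from outside along $I_1',I_2'$, then $E_1\cup Q'\cup E_2$ is a disk. The closed region between $\partial E_1$ and $\partial E_2$ inside a disk neighbourhood has three boundary curves, so it is not an annulus either.
\end{itemize}

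The missing idea is to add a handle along the \emph{unmarked} sides, as the paper does. Inside a planar neighbourhood $V$ of $Q$, take a closed strip $\overline B\cong[0,1]^2$ lying outside $Q$, except that its two ends are exactly the closures of the two components of $\partial Q\setminus(I_1\cup I_2)$. Then $A=Q\cup\overline B$ is a closed annulus; planarity of $V$ rules out a M\"obius band. One boundary curve $J_1$ is $I_1$ together with one long edge of $\overline B$, and the other $J_2$ is $I_2$ together with the other long edge.

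Your worry about picking up extra pieces of $K$ outside $Q$ is then harmless, because (b) only asks for infinitely many crossing components. The set $K\cap\overline B\cap Q$ consists of at most the four endpoints of $I_1,I_2$. By boundary bumping, every $Q_n$ avoiding these points is still a component of $K\cap A$. Hence all but at most four of the $Q_n$ witness (b) and (c).

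In particular, your normalisation to $Q'$ is unnecessary, and as stated it can fail. If $x$ is an endpoint of $I_1$, every arc of $Q\setminus K$ from $I_1$ to $I_2$ starts strictly inside $I_1$. So $x$ is cut off from the marked side of any sub-quadrilateral bounded by two such arcs.
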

\begin{proof}
We  first deal with the ``if'' part and then the ``only if'' part.

Assume that $A$ is a closed annulus satisfying the three conditions (a), (b), and (c) in the statement of the lemma. Since $A$ is homeomorphic to the standard annulus $\{z\in\bbC: 1\le|z|\le2\}$, we may apply  Lemma~\ref{lem:arcalpha} to find disjoint arcs $\alpha_1,\alpha_2$ contained in $A\setminus K$,  each of which connects a point on $J_1$ to a point on $J_2$, such that $P_1$ and $P_2$ are contained in different components of $A\setminus(\alpha_1\cup\alpha_2)$.  Let $D_1$  be the component of $A\setminus(\alpha_1\cup\alpha_2)$ with $P_\infty\subset D_1$.
Since the arcs $\alpha_i$, except for their end points, both lie in ${\rm Int}(A)$, the triple $(\overline{D_1}, J_1\cap \overline{D_1}, J_2\cap \overline{D_1})$ is a marked quadrilateral with respect to $(x,y)$. It follows that $(x,y) \in R_K$.
\begin{figure}[ht]
\includegraphics[trim = 0 10 0 5, width=0.382\textwidth]{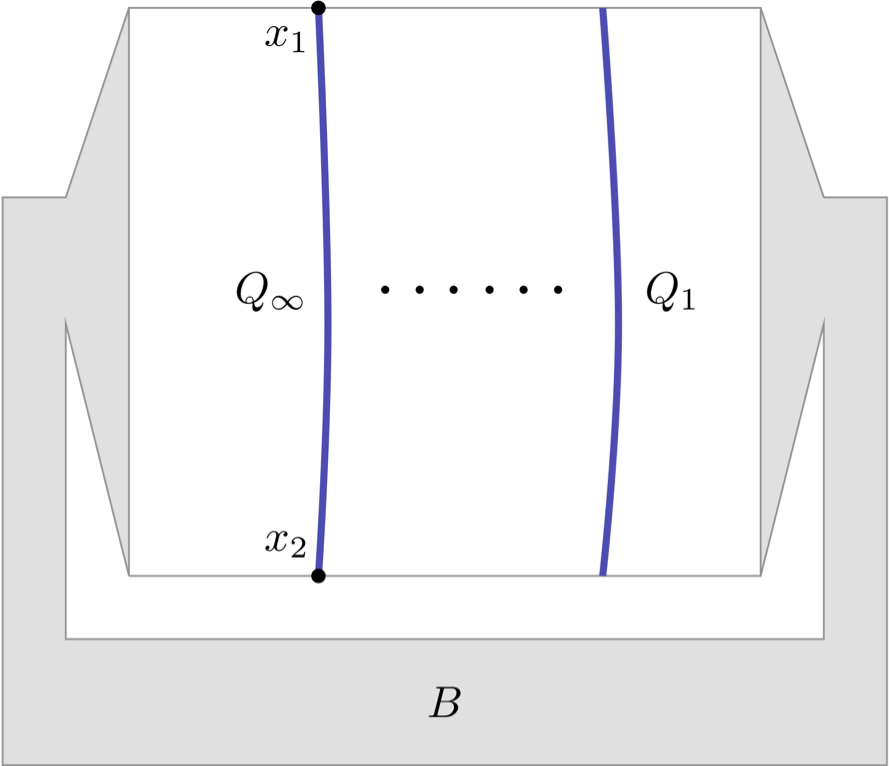}
\caption{An image of $Q$ and $B$, the latter of which is shaded.}\label{fig:RksupRkQ}
\end{figure}

Before going on to deal with the ``only if'' part, let us recall that there is a topological covering map $\varphi$ from $\bbC$ or $\widehat{\bbC}$ onto $\Sc$. Fix $(x,y)\in R_K$ and a marked quadrilateral $(Q,I_1,I_2)$ with respect to the pair $(x,y)$ that satisfies all the conditions (a), (b), and (c) in Definition \ref{R_K}. Using the covering map $\varphi$ we can find a neighborhood $V$ of $Q$ in $\Sc$ that is homeomorphic with $(0,1)^2$. By picking an appropriate embedding $h:[0,1]^2\rightarrow V$ such that $B=h\left((0,1)^2\right)$ is disjoint from $Q$ and $\overline{B}\cap Q=h(\{0,1\}\times[0,1])$, we obtain a closed annulus $A=Q\cup \overline{B}$ that satisfies each of the conditions (a), (b), and (c) in the statement of the lemma (see Figure~\ref{fig:RksupRkQ}). This completes our proof.
\end{proof}

To provide the second characterization of the relation $R_K$, we need the notion of {\em elementary region} from Whyburn \cite[p.18]{Whyburn42} and a certain notion of planarity. 

\begin{Deff}[Elementary region]\label{elementary-region}
An {\it elementary region $W(J_1,\ldots,J_k)$} is a domain in the plane that is bounded by $k\ge2$ disjoint Jordan curves $J_1,\ldots,J_k$, in a way that every $J_i$ is a component of $\partial W$.
\end{Deff}

An open annulus is just an elementary region bounded by two disjoint Jordan curves. 

\begin{Deff}[Planar and strictly planar]\label{def:SP}
 A subset of $\Sc$ is called {\em planar}, if it is homeomorphic to a subset of  $\bbC$. A continuum $K$ in a closed surface $\Sc$ is said to be {\em strictly planar} provided that one of its neighborhoods in $\Sc$ is planar. 
\end{Deff}

Notice that a Jordan curve in $\Sc$ may not be strictly planar in the above sense, since it is possible that the thickening of such a set is a M\"obius strip \cite[p.~157, Lemma (7.4)]{Armstrong}.

By \cite[Lemma 3.3]{LYY-2020}, we can generalize the characterization of $R_K$ given in Lemma~\ref{lem:Rk=RkQ} in the following way.

\begin{lemm}[{\em cf}.~{\cite[Lemma 3.3]{LYY-2020}}]\label{lem:good-R}
Let $K$ be a compactum on a closed surface $\Sc$. Then two distinct elements
$x, y\in K$ are related under $R_K$ if and only if there is an elementary region $W=W(J_1,\ldots,J_k)$, whose closure is strictly planar, that satisfies the following condition: There are two integers $i_1$ and $i_2$, such that $\overline{W}\cap K$ has infinitely many components $P_n$ intersecting both $J_{i_1}$ and $J_{i_2}$ and the limit $P_\infty=\lim\limits_{n\rightarrow\infty}P_n$ under the Hausdorff distance satisfies $x\in J_{i_1}\cap P_\infty$ and $y\in J_{i_2}\cap P_\infty$.
\end{lemm}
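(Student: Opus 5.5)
The statement has two directions, and I would prove them separately, leaning on Lemma~\ref{lem:Rk=RkQ}. The ``only if'' direction is the easy one. Given $(x,y)\in R_K$ with $x\neq y$, Lemma~\ref{lem:Rk=RkQ} already supplies a closed annulus $A$ with the required properties. Its construction in the proof of that lemma places $A$ inside a neighborhood $V$ homeomorphic to $(0,1)^2$, so $A$ is strictly planar. Take $W=\mathrm{Int}(A)$, the elementary region $W(J_1,J_2)$ with $k=2$, $i_1=1$, $i_2=2$. Its closure is $A$, which is strictly planar, and conditions (a)--(c) of Lemma~\ref{lem:Rk=RkQ} translate verbatim into the required condition.

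For the ``if'' direction, let $W=W(J_1,\ldots,J_k)$ be given with $\overline{W}$ strictly planar. Fix a planar neighborhood $N$ of $\overline{W}$ and an embedding $h:N\to\bbC$. In the plane, $h(\overline{W})$ is a closed disc with $k-1$ open discs removed, with $h(J_{i_1})$ and $h(J_{i_2})$ among its boundary curves. I would reduce to a quadrilateral as follows. First choose an arc $\gamma\subset h(\overline W)$ from $h(J_{i_1})$ to $h(J_{i_2})$ that avoids the set $\bigcup_n h(P_n)$ except possibly at finitely many of the $P_n$; these few can simply be discarded from the sequence. The arc $\gamma$ should also avoid the other boundary curves $J_i$, $i\notin\{i_1,i_2\}$. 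Such an arc exists by the argument of Lemma~\ref{lem:arcalpha}. By \cite[\S49, V, Theorem~3]{Kuratowski68}, a closed set having at least two components that each meet both $J_{i_1}$ and $J_{i_2}$ does not separate these two curves in the appropriate sense. Applying this to a compact piece of $K\cap\overline W$ obtained by a Cut Wire (Lemma~\ref{lem:CWT}) splitting, which separates $P_1\cup P_2$ from the rest, yields a path between the curves that misses this piece.

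Next, cut $h(\overline W)$ along two disjoint such arcs $\alpha_1,\alpha_2$. These should be chosen via Lemma~\ref{lem:arcalphaSquare}, applied after a first cut so that they separate two members of the sequence. We then take the complementary piece $D$ that contains infinitely many $P_n$, and in particular contains $P_\infty$. If the piece $D$ still contains some other holes $J_i$, I would modify the arcs: thicken a thin arc from each such hole to $\alpha_1$, chosen to miss all but finitely many $P_n$, and absorb the hole into the cut. After this, $\overline D$ is a closed topological disc whose boundary consists of $\alpha_1$, $\alpha_2$, a subarc $I_1$ of $J_{i_1}$ and a subarc $I_2$ of $J_{i_2}$. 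Here $x\in I_1$ and $y\in I_2$, since $x,y\in P_\infty\subset\overline D$. Moreover $K\cap\partial\overline D\subset I_1\cup I_2$, because the arcs lie in the complement of $K$.

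Pulling back by $h^{-1}$ gives a marked quadrilateral $(Q,I_1,I_2)$ in $\Sc$. The components of $K\cap Q$ meeting both $I_1$ and $I_2$ include infinitely many $P_n$, since each $P_n$ lies in $Q$ and is a component of $K\cap\overline W$ and hence of $K\cap Q$. These $P_n$ converge to $P_\infty\ni x,y$, so $(x,y)\in R_K$ by Definition~\ref{R_K}.

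The main obstacle is ensuring that the cutting arcs can be chosen in the complement of $K$ and away from the other boundary curves, while still leaving infinitely many $P_n$ in one piece together with the limit $P_\infty$. Only finitely many $P_n$ can be lost at each of the finitely many cuts, but one must verify carefully that the limit points $x,y$ do not end up on a cut. For that I would choose the cutting arcs in $\overline W\setminus K$, which is automatically disjoint from $P_\infty\subset K$.
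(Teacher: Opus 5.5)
Your ``only if'' direction is fine: the annulus $A=Q\cup\overline B$ from the proof of Lemma~\ref{lem:Rk=RkQ} lies in the planar set $V$. For $k=2$, your ``if'' direction is just the ``if'' part of Lemma~\ref{lem:Rk=RkQ}. The gap is how you treat the extra boundary curves $J_i$, $i\notin\{i_1,i_2\}$, when $k\ge 3$. Absorbing a hole into the cut puts $J_i$ (or a parallel copy of it) and both sides of the connecting arc on $\partial Q$ outside $I_1\cup I_2$. Condition (a) of Definition~\ref{R_K} then forces all of these to miss $K$ entirely. You only arrange that the connecting arc misses all but finitely many $P_n$, and nothing in the hypotheses stops $K$ from meeting $J_i$, or from blocking every arc from $J_i$ to $\alpha_1$. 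The arc $\gamma$ has the same defect: the Cut Wire/Kuratowski argument gives a path missing only a compact piece containing $P_1\cup P_2$, not all of $K$. Also, after one cut a region with $k\ge3$ boundary curves is not yet a quadrilateral, so Lemma~\ref{lem:arcalphaSquare} does not apply.

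More seriously, the quadrilateral you are aiming for need not exist. Let $W$ be a pair of pants with inner curves $J_1,J_2$ and outer curve $J_3$. Let $P_n$ be disjoint arcs from $J_1$ to $J_2$ converging from one side to an arc $P_\infty$ from $x\in J_1$ to $y\in J_2$. Put $K=\bigcup_n P_n\cup P_\infty\cup\sigma$, where $\sigma$ is an arc from an interior point of $P_\infty$, on the other side, to $J_3$, missing the $P_n$ and $J_1\cup J_2$. Set $P_0=P_\infty\cup\sigma$, and suppose $Q\subset\overline W$ is any disc containing $P_\infty$ with $K\cap\partial Q\subset J_1\cup J_2$. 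Then $P_0\setminus\{x,y\}$ is connected, misses $\partial Q$ and meets $\mathrm{Int}(Q)$. Hence it lies in $\mathrm{Int}(Q)\subset W$, which is absurd since $\sigma$ meets $J_3$. So no choice of cutting arcs or absorptions can work.

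Nevertheless $(x,y)\in R_K$ here. Let $I_1$ run from below $P_1$ up through $x$ and then across $\sigma$. Let $I_2$ run from a point just above $P_\infty$ on the far side of $\sigma$ down through $y$ to below $P_1$. Join the free ends by arcs in $W\setminus K$. The middle pieces of the $P_n$ are then components of $K\cap Q$ meeting both $I_1$ and $I_2$, and they converge to $P_\infty$.

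What is missing is a way to handle pieces of $K$, possibly the component through $P_\infty$ itself, that reach the other $J_i$. One option is to let the marked sides run through $K$. Another, on the sphere, is to use the annulus bounded by $J_{i_1}$ and $J_{i_2}$ alone, after a counting/clopen argument showing that all but finitely many $P_n$ miss the other $J_i$ and remain components. The paper gives no construction of its own here; it simply invokes the planar result \cite[Lemma 3.3]{LYY-2020}.
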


To conclude this section, we address how the core decomposition $\Dc_K^{PC}$ of a general compactum $K$ is connected to the core decompositions $\Dc_X^{PC}$ of its components $X$. Here we assume that all of those core decompositions $\Dc_X^{PC}$ exist. To present such a connection, let us introduce some terminology.

\begin{Deff}\label{def:F-relation}
Given a compactum $K$, let $\Fc_K$ be the finest usc decomposition of $K$ into subcontinua such that for each sequence $(P_n)_{n\in\mathbb{N}}$ of distinct components $P_n$ of $K$ that converge under Hausdorff distance its limit $P_\infty$ is contained in a single element of $\Fc_K$.
\end{Deff}

The following is immediate.
\begin{lemm}\label{lem:F_CD}
$\Fc_K$ is the finest usc decomposition such that the resulting quotient space is an $F$-compactum.
\end{lemm}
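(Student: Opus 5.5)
We prove both halves of Lemma~\ref{lem:F_CD}: the quotient $K/\Fc_K$ is an $F$-compactum, and $\Fc_K$ refines every usc decomposition $\Dc$ of $K$ into subcontinua whose quotient is an $F$-compactum. Throughout, the quotient map $\pi\colon K\to K/\Dc$ is a monotone closed map between compacta. Hence preimages of connected sets are connected, and components of $K/\Dc$ correspond bijectively to unions of components of $K$.

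\emph{Refinement.} Let $\Dc$ be an usc decomposition into subcontinua with $F$-compactum quotient. We show that $\Dc$ satisfies the defining property of $\Fc_K$; minimality of $\Fc_K$ then gives that $\Fc_K$ refines $\Dc$.

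So let $(P_n)$ be distinct components of $K$ converging in the Hausdorff metric to $P_\infty$. Each $\pi(P_n)$ is a connected subset of a component $C_n$ of $K/\Dc$. Suppose first that infinitely many of the $C_n$ are distinct. The $F$-property forces $\operatorname{diam} C_n\to0$ along them, so $\pi(P_n)$ shrinks to a single point, by continuity and compactness applied to $\pi(P_\infty)=\lim\pi(P_n)$. (Here we use that $\pi$ is uniformly continuous, so $\pi(P_n)\to\pi(P_\infty)$ in the Hausdorff metric on $K/\Dc$.) Hence $P_\infty$ lies in one element of $\Dc$.

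Otherwise we pass to a subsequence with $C_n=C$ constant. Then all $P_n$ lie in $\pi^{-1}(C)$. Since $\pi$ is monotone and $C$ is connected, $\pi^{-1}(C)$ is a single component of $K$. This contradicts $P_n$ being distinct components.

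The main subtlety in this half is interpreting the defining property of $\Fc_K$ correctly. What we have just checked is that for every convergent sequence of distinct components, the limit lies in a single element of $\Dc$. That is exactly the required property, so $\Fc_K$ refines $\Dc$. We take the existence of $\Fc_K$, as the finest such decomposition, as given by Definition~\ref{def:F-relation}. This would be justified by intersecting the corresponding closed equivalence relations, as in the Sch\"onflies setting, and is the point I would double-check.

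\emph{$F$-property of $K/\Fc_K$.} Suppose the quotient is not an $F$-compactum. Then there are infinitely many distinct components $C_n$ of $K/\Fc_K$ with diameter at least $\varepsilon$. Each $\pi^{-1}(C_n)$ is a component $P_n$ of $K$. Passing to a subsequence, $P_n\to P_\infty$ in the Hausdorff metric. By the defining property of $\Fc_K$, $\pi(P_\infty)$ is a single point $p$. Uniform continuity gives $\pi(P_n)=C_n\to\{p\}$, which contradicts $\operatorname{diam}C_n\ge\varepsilon$.

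Hence $K/\Fc_K$ is an $F$-compactum, and combined with the refinement half, $\Fc_K$ is the finest usc decomposition into subcontinua with $F$-compactum quotient.
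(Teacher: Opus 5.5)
Your argument is correct. It is the routine verification the paper leaves implicit when it calls the lemma immediate (the paper gives no proof): you rightly read the statement as being about usc decompositions into subcontinua, and under the monotone map $\pi$ components of the quotient correspond to single components of $K$, not unions, which is what you actually use. On the existence of $\Fc_K$ that you flag, intersecting closed equivalence relations does not by itself give connected classes; instead, if $\sim$ is the smallest closed equivalence relation containing all pairs $(x,y)$ with $x,y$ in a common limit $P_\infty$, then the components of its classes form a usc decomposition into subcontinua (Lemma~\ref{lem:lemNadler}) that still has the defining property because each $P_\infty$ is connected, so by minimality the classes of $\sim$ are already connected and form $\Fc_K$, exactly as in the proof of Proposition~\ref{connected_fiber}.
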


With this, we can establish the above-mentioned connection as follows.

\begin{prop}\label{prop:continuum_case}
Let $K$ be a compactum such that the core decomposition $\Dc_X^{PC}$ with Peano quotient exists for each component $X$ of $K$. Let $\Ec_K$ be the finest usc decomposition that satisfies two requirements: 
\begin{itemize}
\item[(1)] For each component $X$ of $K$ and each $\delta\in\Dc_X^{PC}$ there is a unique element $\delta'\in\Ec_K$ with $\delta\subset\delta'$.
\item[(2)] For each $\delta\in\Fc_K$ there is a unique element $\delta'\in\Ec_K$ with $\delta\subset\delta'$. 
\end{itemize}
Then $\Dc_K^{PC}$ exists and equals $\Ec_K$.
\end{prop}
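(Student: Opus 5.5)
The plan is to prove three things: $\Ec_K$ exists and consists of subcontinua, its quotient is a Peano compactum, and it refines every Peano decomposition of $K$.

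\emph{Existence.} The family of usc decompositions into subcontinua that satisfy (1) and (2) is nonempty, since $\{K\}$ or the decomposition into components of a suitable coarsening belongs to it. I would construct $\Ec_K$ explicitly. Let $\sim$ be the smallest closed equivalence relation on $K$ such that each $\delta\in\Dc_X^{PC}$ and each $\delta\in\Fc_K$ lies in a single class. Closed equivalence relations on a compactum give usc decompositions. Every class is contained in an element of $\Fc_K$, and $\Fc_K$ refines the component decomposition in the sense that its elements are continua which are unions of components. Connectedness of the classes follows because each generating set is a continuum, and the closure of a chain of continua is a continuum (compare the fiber-connectedness argument in Proposition~\ref{connected_fiber}). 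So $\Ec_K$ exists.

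\emph{Peano quotient.} Let $\pi:K\to\Ec_K$ be the projection. The quotient factors through $K/\Fc_K$, which is an $F$-compactum by Lemma~\ref{lem:F_CD}. Hence the nondegenerate components of $\Ec_K$ form a null sequence, since images of a null family of continua under a refinement-compatible monotone map stay null. It remains to show that each component $C$ of the quotient is locally connected. $C$ is the image of a union of components $X$ of $K$ glued along limits of convergent sequences of components. Each $\pi(X)$ is a monotone image of $X/\Dc_X^{PC}$, which is Peano, so $\pi(X)$ is a Peano continuum. Only finitely many of the $\pi(X)$ have diameter $>\varepsilon$. Any small neighbourhood of a point is then covered by finitely many small connected pieces from big components together with small images of small components. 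This uses the identification of Hausdorff limits of components, which gives Property S. I expect this step to be the main obstacle: controlling local connectedness at points where infinitely many distinct components accumulate. I would handle it by showing that $C$ has Property S. Given $\varepsilon$, I cover $C$ by finitely many $\pi(X)$-pieces of Peano partitions together with the connected images of the accumulating families, which are collapsed to points lying in such pieces.

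\emph{Minimality.} Let $\Dc$ be any Peano decomposition of $K$. Each component $X$ of $K$ meets elements of $\Dc$ that are subcontinua, and the restriction $\Dc|_X$ has the components of $\pi_\Dc(X)$ as its quotient. Since $\pi_\Dc(X)$ lies in a component of a Peano compactum, and a subcontinuum of that component is refined appropriately, $\Dc|_X$ is coarser than a Peano decomposition. Here I would argue that the elements of $\Dc$ meeting $X$, intersected with $X$, give an usc decomposition of $X$ with Peano quotient; if the intersections are disconnected, I would pass to the monotone-light factorization. Therefore $\Dc_X^{PC}$ refines it, which gives (1). Since $\Dc$ has an $F$-compactum quotient, Lemma~\ref{lem:F_CD} shows that $\Fc_K$ refines $\Dc$, which gives (2). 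As $\Ec_K$ is the finest decomposition with (1) and (2), $\Ec_K$ refines $\Dc$. Thus $\Ec_K$ is a Peano decomposition refining all others, i.e. $\Dc_K^{PC}=\Ec_K$.
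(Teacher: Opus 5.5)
Your three-step outline (existence, Peano quotient, minimality) matches the paper's. However, there is a genuine gap caused by a misreading of $\Fc_K$.

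By Definition~\ref{def:F-relation}, $\Fc_K$ is a decomposition \emph{into subcontinua}, so each of its elements lies in a single component of $K$. It never glues distinct components together. Putting each limit $P_\infty$ into one element already forces the images of the $P_n$ to shrink to a point, so no gluing is needed. The same holds for $\Ec_K$ and for every Peano decomposition $\Dc$. Several of your claims are therefore false:
\begin{itemize}
\item that elements of $\Fc_K$ ``are unions of components'';
\item that each class of $\sim$ lies in an element of $\Fc_K$ (the inclusion is the reverse);
\item that a component of the quotient is ``a union of components glued along limits''.
\end{itemize}
The missing observation is this. Since every element of $\Dc$ is connected, $\pi_{\Dc}^{-1}(\pi_{\Dc}(X))=X$ for every component $X$ of $K$. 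As $\pi_{\Dc}$ is closed and monotone, $\pi_{\Dc}(X)$ is then an entire component of the quotient.

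This misreading breaks two of your steps as written.

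\emph{Local connectedness.} You do state the right fact: $\pi(X)$ is a continuous image of the Peano continuum $X/\Dc_X^{PC}$. But you then treat a Property~S argument for glued families as the main obstacle and only sketch it. With the observation above, $C=\pi(X)$ and you are done. This is exactly the paper's ``locally connected by (1)''.

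\emph{Minimality.} Here the argument fails. Knowing that $\pi_{\Dc}(X)$ merely \emph{lies in} a component of a Peano compactum gives nothing, because subcontinua of Peano continua need not be locally connected (a $\sin(1/x)$-continuum inside a square). Your fallback via the monotone-light factorization cannot help either, since light maps do not reflect local connectedness: projecting the closed $\sin(1/x)$-continuum onto the $y$-axis is a light map onto an arc. What you need is that $\pi_{\Dc}(X)$ \emph{is} a component of the Peano quotient, hence a Peano continuum. Then $\{\delta\in\Dc:\delta\subset X\}$ is a Peano decomposition of $X$ and is refined by $\Dc_X^{PC}$, which gives (1).

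Once this is corrected, the rest of your outline goes through: your handling of (2) via Lemma~\ref{lem:F_CD}, and of the null-sequence property via $K/\Fc_K$. The same holds for connectedness of the classes of $\Ec_K$, though this is best done with the component trick of Lemma~\ref{lem:lemNadler}, as in Proposition~\ref{connected_fiber}, rather than via ``closures of chains''.
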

\begin{proof}
Note that every component of the quotent space $\Ec_K$ is locally connected by (1). Because it is also an $F$-compactum by (2) we conclude that $\Ec_K$ a Peano decomposition. Moreover, if $\Dc$ is a Peano decomposition of $K$ and $X$ is a component of $K$, then every $\delta\in\Dc_X^{PC}$ is contained in a single element of $\Dc$. Therefore, we just need to show that every Peano decomposition $\Dc$ of $K$ is refined by $\Fc_K$. This is the case, since the projection $\pi:K\rightarrow\Dc$ sends every subcontinuum $P_\infty$ to a single point provided that $P_\infty$ is the limit under Hausdorff distance of an infinite of different components $P_n$ of $K$.
\end{proof}

\section{Existence of the core decomposition and its characterization by Sch\"onflies equivalence}\label{Peano}

This section is devoted to the proof of Theorem~\ref{core}.

Throughout this section, we will need some notation. Let $K$  be a compactum on a closed surface $\Sc$. For any point $x_0\in\Sc$ we can find a {\it coordinate chart} $\varphi: V_0\rightarrow\bbD=\{z\in\bbC: |z|<1\}$, which is a homeomorphism from a neighborhood $V_0$ of $x_0$ onto the unit disk $\bbD$ that sends $x_0$ to the origin $0$. In this way we can identify neighborhoods $D_r(x_0)=\varphi^{-1}(\{z\in\bbD: |z|<r\})$ of $x_0$ for $0<r<1$, each of which has a closure homomorphic to $\overline{\bbD}$.
Such a set $D_r(x_0)$ will be  called a \textit{disk neighborhood} of $x_0$. Moreover, if $\mathcal{R}$ is some relation, we will write $\mathcal{R}(x)=\left\{y: (x,y)\in\mathcal{R}\right\}$ for the {\em fiber} of $\mathcal{R}$ at $x$. Using this notation, we recall that $R_K(x) \subset \mathcal{D}_K(x)$ for all $x\in K$. We first establish the following result.

\begin{prop}\label{connected_fiber}
$\Dc_K$ is an usc decomposition of $K$ into subcontinua.
\end{prop}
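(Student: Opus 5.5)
The proof splits into two parts: upper semicontinuity and connectedness of the classes $\Dc_K(x)$.

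\emph{Upper semicontinuity.} The relation $\sim$ is by definition a closed subset of $K\times K$. For a compactum $K$, a closed equivalence relation always gives an usc decomposition. To see this, let $U\supset\Dc_K(x)$ be open. Then $U^*=K\setminus \pi^{-1}\pi(K\setminus U)$ is open, because the saturation of the compact set $K\setminus U$ is the projection of $(K\times(K\setminus U))\cap{\sim}$, which is compact. Moreover $U^*$ is saturated and satisfies $\Dc_K(x)\subset U^*\subset U$. So the real content is that every class is connected.

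\emph{Connectedness.} I will build an explicit closed equivalence relation with connected classes that contains $R_K$. Let $\Dc'$ be the collection of components of the classes of $\sim$; more precisely, set $x\approx y$ iff $x,y$ lie in the same component of $\Dc_K(x)$. By minimality of $\sim$, it then suffices to show two things: $R_K\subset{\approx}$, and $\approx$ is closed.

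The first inclusion is easy. If $(x,y)\in R_K$ with $x\ne y$, then Definition~\ref{R_K} gives components $Q_n$ converging to a continuum $Q_\infty\ni x,y$. Every pair of points of $Q_\infty$ is in $R_K$, using the same quadrilateral. Indeed, for $z\in Q_\infty$ one can shrink or reshape $Q$ so that $z$ becomes a boundary point; alternatively one applies Lemma~\ref{lem:good-R}. This would show $Q_\infty\subset\Dc_K(x)$, so $x$ and $y$ lie in the connected set $Q_\infty\subset\Dc_K(x)$, hence $x\approx y$. I expect some care is needed here: reshaping the quadrilateral to put an arbitrary $z\in Q_\infty$ onto $I_1$ may fail. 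In that case I would instead show the weaker statement that $Q_\infty$ lies in a single $\sim$-class. That statement follows by approximating: points of $Q_\infty$ near $I_1$ and $I_2$ are related, and closedness of $\sim$ then gives the rest. Either way, $x$ and $y$ end up in one connected subset of their class.

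\emph{Closedness of $\approx$ (main obstacle).} Suppose $x_n\approx y_n$, $x_n\to x$, $y_n\to y$. Let $C_n$ be the component of $\Dc_K(x_n)$ containing $x_n,y_n$. After passing to a subsequence, $C_n\to C$ in the Hausdorff metric, and $C$ is a continuum containing $x,y$. By closedness of $\sim$ we get $C\subset\Dc_K(x)$, so $x\approx y$. Hence $\approx$ is closed, because the Hausdorff limit of continua is a continuum; no surface topology is needed at this point.

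The genuine difficulty is different: $\approx$ must also be transitive and contain $R_K$. Transitivity is automatic, since components of a class partition it. It then follows that ${\sim}\subset{\approx}$ by minimality, and since ${\approx}\subset{\sim}$ trivially, every class of $\sim$ is connected.

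So the expected obstacle is the inclusion $R_K\subset{\approx}$, namely showing that the whole limit continuum $Q_\infty$ lies in one $\sim$-class. For this I would rely on Lemma~\ref{lem:Rk=RkQ} and the Trapping Lemma~\ref{lem:trapping}. These lemmas produce, for each $z\in Q_\infty$ and each small disk $D_r(z)$, annuli or quadrilaterals showing that $z$ is $R_K$-related to points arbitrarily close to $x$. Closedness of $\sim$ then finishes the argument.
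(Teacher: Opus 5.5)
Your framework matches the paper's. You replace $\Dc_K$ by the decomposition into components of the classes of $\sim$ and conclude by minimality of $\sim$. Where the paper cites Lemma~\ref{lem:lemNadler}, you give a correct direct Hausdorff-limit proof that $\approx$ is closed. The gap is the step you flag yourself: showing $R_K\subset{\approx}$, i.e.\ that the limit continuum $Q_\infty$ lies in $\Dc_K(x)$. This is the whole geometric content of the proposition (Lemma~\ref{connected_fiber_1} in the paper), and none of your three suggestions establishes it.

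\emph{Reshaping $Q$.} Moving an arbitrary $z\in Q_\infty$ onto a marked side is not justified. The new quadrilateral would need unmarked sides avoiding $K$, and infinitely many crossing components converging to a continuum through $z$. Lemma~\ref{lem:good-R} is only an equivalent form of $R_K$, so it cannot produce such a region.

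\emph{The fallback.} The claim that points near $I_1$ and $I_2$ are related says nothing about points of $Q_\infty$ far from $\partial Q$. Closedness of $\sim$ only passes to limits of related pairs, and you have exhibited no related pairs approaching such a point.

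\emph{The final paragraph.} It names the right target but only lists lemmas. Also, what can actually be proved is not that $z$ itself is $R_K$-related to points near $x$. It is that $(z,z_1)\in\overline{R_K}$, via approximations in both coordinates.

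The missing idea is the following annulus argument.
\begin{enumerate}
\item Let $z\in Q_\infty\setminus\partial Q$ and $z_1\in Q_\infty\cap I_1$. For small $r$, take disjoint disks $D_r(z)$, $D_r(z_1)$ in a chart. Set $\eta_r=\partial D_r(z_1)\cap Q$ and $A=Q\setminus(D_r(z)\cup D_r(z_1))$. Then $A$ is a closed annulus bounded by $\partial D_r(z)$ and $\Gamma=(\partial Q\setminus D_r(z_1))\cup\eta_r$.
\item For all large $n$, the component $Q_n$ of $K\cap Q$ meets both $\partial D_r(z)$ and $\eta_r$. It has points inside each disk and meets $I_2$ outside both.
\item By the Cut Wire Theorem, $Q_n\cap A$ has a component meeting both boundary curves of $A$. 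Because $Q_n$ is a component of $K\cap Q$ and $A\subset Q$, this is also a component of $K\cap A$.
\item These components are pairwise disjoint. A subsequence converges to a continuum meeting $\partial D_r(z)$ at some $z'$ and $\eta_r$ at some $z_1'$. Lemma~\ref{lem:Rk=RkQ} then gives $(z',z_1')\in R_K$ with $|z-z'|=|z_1-z_1'|=r$.
\item Letting $r\to0$ yields $(z,z_1)\in\overline{R_K}\subset{\sim}$.
\end{enumerate}

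Take $z_1=x$. For points of $Q_\infty\cap\partial Q$, use transitivity through a point of $Q_\infty\setminus\partial Q$. That set is nonempty: $Q_\infty\subset K\cap Q$ is connected and meets both disjoint arcs $I_1,I_2$, while $K\cap\partial Q\subset I_1\cup I_2$. This gives $Q_\infty\subset\Dc_K(x)$ and closes the gap. The Trapping Lemma is not needed for this step.
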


The main part of the proof of Proposition~\ref{connected_fiber} is contained in the following lemma.

\begin{lemm}\label{connected_fiber_1}
For any $x\in K$ there is a connected set $N_x$ such that $R_K(x)\subset N_x\subset\Dc_K(x)$.
\end{lemm}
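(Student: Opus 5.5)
The natural candidate is
\[
N_x=\{x\}\cup\bigcup\{P_\infty : P_\infty \text{ a Hausdorff limit witnessing } (x,y)\in R_K \text{ for some } y\in R_K(x)\setminus\{x\}\}.
\]
If $R_K(x)=\{x\}$ we simply take $N_x=\{x\}$. Otherwise, for each $y\in R_K(x)\setminus\{x\}$ fix a marked quadrilateral $(Q,I_1,I_2)$ as in Definition~\ref{R_K}. Condition (c) gives a continuum $Q_\infty(y)$ containing both $x$ and $y$. Put
\[
N_x=\{x\}\cup\bigcup_{y} Q_\infty(y).
\]
This set contains $R_K(x)$, and it is connected because it is a union of continua all passing through $x$. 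So the whole content of the lemma is the inclusion $N_x\subset\Dc_K(x)$, that is, $Q_\infty(y)\subset \Dc_K(x)$ for each such $y$.

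To prove this inclusion, fix $y$, the quadrilateral $Q$, the components $Q_n$, and the limit $Q_\infty$. Let $z\in Q_\infty$; we want $z\sim x$. We use that $\sim$ is closed and transitive, and aim to show that $z$ is a limit of points $z_k$ with $(x_k,z_k)\in R_K$, where $x_k\to x$ and $x_k\sim x$. More directly, we try to show $(x,z)\in R_K$ after passing to sub-quadrilaterals. Using Lemma~\ref{lem:arcalphaSquare}, we separate the $Q_n$ by disjoint arcs in $Q\setminus K$ joining $I_1$ to $I_2$. This lets us assume, passing to a subsequence, that the $Q_n$ are linearly ordered across $Q$ and accumulate only on the side containing $Q_\infty$.

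For $z\in Q_\infty\cap\mathrm{int}(Q)$ take a small disk $D_r(z)$ and a short arc through $z$ that crosses the strip, transverse to the $Q_n$. Points $z_n\in Q_n$ converging to $z$ exist by Hausdorff convergence. The plan is then to build, from the separating arcs of Lemma~\ref{lem:arcalphaSquare} together with a cross-cut near $z$, a new marked quadrilateral $Q'\subset Q$. Its side $I_1'=I_1\cap Q'$ is to contain $x$, its side $I_2'$ is to pass through $z$, and $\partial Q'$ is to meet $K$ only in $I_1'\cup I_2'$. The components of $K\cap Q'$ that contain the pieces of $Q_n$ from $I_1$ to near $z_n$ then witness $(x,z')\in R_K$ for some point $z'$ close to $z$. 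The Trapping Lemma~\ref{lem:trapping} is exactly the tool for controlling this: pieces of $Q_n$ outside small disks are genuine components of $K$ outside those disks. Letting $r\to0$ gives $z'\to z$, and closedness of $\sim$ yields $z\sim x$. Points of $Q_\infty$ on $\partial Q$ are handled by closedness, since $Q_\infty\cap\mathrm{int}(Q)$ is dense in $Q_\infty$ when $Q_\infty$ is nondegenerate and not contained in $I_1\cup I_2$. In the remaining case $Q_\infty$ lies in $I_1\cup I_2$; it is then contained in one of them by connectedness, which contradicts $x\in I_1$, $y\in I_2$ together with the disjointness of $I_1$ and $I_2$.

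I expect the main obstacle to be this construction of $Q'$. The cross-cut near $z$ must avoid $K$ while still cutting infinitely many $Q_n$ in a controlled way. Since $K$ may be large near $z$, a cut disjoint from $K$ may not exist. I would therefore instead let the new marked side $I_2'$ be a small arc lying in $\partial D_r(z)$, placing the part of $K$ near $z$ on the marked edge, which condition (a) permits. Concretely, $Q'$ is the closure of the component of $Q\setminus(\alpha_1\cup\alpha_2\cup \overline{D_r(z)})$ adjacent to $I_1$, suitably trimmed so that it is a closed disk. The limit of the resulting components then contains $x$ and a point of $\partial D_r(z)$, and hence gives pairs in $R_K$ converging to $(x,z)$.
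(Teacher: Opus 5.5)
Your reduction is the same as the paper's: the paper takes $C_y=P_\infty$, the Hausdorff limit witnessing $(x,y)\in R_K$, and has to show $P_\infty\subset\Dc_K(x)$. Your argument for that inclusion, however, has a genuine gap.

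First, you never actually build the sub-quadrilateral $Q'$. With $D_r(z)$ lying inside the strip between $\alpha_1$ and $\alpha_2$, the region you describe is an annulus, and its boundary also contains pieces of $I_2$ that meet $K$. More seriously, even if you grant $Q'$, the argument does not reach $x$. A Hausdorff limit of components of $K\cap Q'$ joining $I_1'$ to $\partial D_r(z)$ meets $I_1'$ only at some uncontrolled point of $Q_\infty\cap I_1'$. Nothing forces that limit to contain $x$, or even to come near it, so you do not obtain pairs in $R_K$ converging to $(x,z)$.

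Second, your treatment of the other boundary points is wrong, because $Q_\infty\cap\mathrm{int}(Q)$ need not be dense in $Q_\infty$. Let $Q=[0,1]^2$ with $I_1$ the top edge and $I_2$ the bottom edge, and fix $0<a<b<1$. For $n\ge N$, where $b+\frac1N<1$, let
\[
Q_n=\{a+\tfrac1n\}\times[0,1-\tfrac1n]\ \cup\ [a+\tfrac1n,b+\tfrac1n]\times\{1-\tfrac1n\}\ \cup\ \{b+\tfrac1n\}\times[1-\tfrac1n,1],
\]
and put $K=\overline{\bigcup_n Q_n}$. Then:
\begin{itemize}
\item $Q_\infty=\{a\}\times[0,1]\cup[a,b]\times\{1\}$, and $(x,y)\in R_K$ for $x=(\frac{a+b}{2},1)$ and $y=(a,0)$.
\item No point of $(a,b)\times\{1\}$ is a limit of points of $Q_\infty\cap\mathrm{int}(Q)=\{a\}\times(0,1)$.
\item Since $Q_n\cap I_1=\{(b+\frac1n,1)\}$, all your construction guarantees on $I_1$ is that the limit contains $(b,1)$.
\end{itemize}

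The missing device is a second small disk around the boundary target point. The paper proves $(z,z_1)\in\overline{R_K}$ for every $z\in P_\infty\setminus\partial Q$ and every $z_1\in P_\infty\cap\partial Q$, as follows.
\begin{itemize}
\item For small $r$, the set $A=Q\setminus(D_r(z)\cup D_r(z_1))$ is a closed annulus. Its boundary curves are $\Gamma=(\partial Q\setminus D_r(z_1))\cup\eta_r$, where $\eta_r=\partial D_r(z_1)\cap Q$, and $\partial D_r(z)$.
\item For large $n$, $P_n$ meets both $\eta_r$ and $\partial D_r(z)$. The Cut Wire Theorem (Lemma~\ref{lem:CWT}) then gives a component of $P_n\cap A$ meeting both. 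This is a component of $K\cap A$, because $P_n$ is a component of $K\cap Q$.
\item Subsequential limits of these components meet both $\partial D_r(z)$ and $\eta_r$. Lemma~\ref{lem:Rk=RkQ}, the annulus form of $R_K$, then gives $(z',z_1')\in R_K$ with $|z'-z|=|z_1'-z_1|=r$. Its proof supplies the $K$-free cross-cuts through Lemma~\ref{lem:arcalpha}, so no quadrilateral has to be built by hand.
\item Letting $r\to0$ gives $(z,z_1)\in\overline{R_K}$.
\end{itemize}
Because $z_1$ is arbitrary, every interior point of $P_\infty$ is related both to $x$ and to every boundary point of $P_\infty$. Moreover $P_\infty\not\subset\partial Q$, since it joins $I_1$ to $I_2$. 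Hence $P_\infty\subset\Dc_K(x)$, and no density argument is needed.
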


Note that we do not need to prove that $N_x$ is a continuum.

\begin{proof}
It suffices to show that for each $y\in R_K(x)$ there is a continuum $C_y$ satisfying $\{x,y\} \subset C_y \subset \Dc(x)$.
To do that, we fix a marked quadrilateral $\Qc=(Q,I_1,I_2)$ with respect to $(x,y)$, with $x\in I_1$, $y\in I_2$ and $(K\cap \partial Q) \subset (I_1 \cup I_2)$, such that $K\cap Q$ has infinitely many components $P_n$ $(n\ge1)$ satisfying $P_n \cap I_i\not= \emptyset$ for each $i\in\{1,2\}$, and $(P_n)_{n\ge1}$ converges to a continuum $P_\infty\supset\{x,y\}$ in the Hausdorff distance. 

We claim that for any $z\in P_\infty\setminus\partial Q$ and any $z_1\in P_\infty\cap\partial Q$ the closure $\overline{R_K}$ of $R_K$ contains the element $(z,z_1)$. To show this, we follow the proof of \cite[Proposition 5.1]{LLY-2019}. W.l.o.g.\ we may assume that $z_1\in I_1$ (the case $z_1\in I_2$ is symmetric). Moreover, to show that $(z,z_1)\in \overline R_K$, it suffices to verify that for any small $r>0$ there exist $z', z_1'\in P_\infty$ with $|z-z'|=|z_1-z_1'|=r$ and $(z',z_1')\in R_K$.  We represent $\Qc$ as a rectangle in Figure~\ref{location}, where the two components of $\partial Q\setminus(I_1\cup I_2)$ are denoted by $\alpha_0$ and $\beta_0$.
\begin{figure}[ht]
\vspace{-0.382cm}
\begin{center}
\begin{tikzpicture}[scale=1.3]
\footnotesize
 \pgfmathsetmacro{\xone}{0}
 \pgfmathsetmacro{\xtwo}{16}
 \pgfmathsetmacro{\yone}{0}
 \pgfmathsetmacro{\ytwo}{3.5}

\draw[gray,very thick] (\xone,\yone) -- (\xone+3*\xtwo/7,\yone) --  (\xone+3*\xtwo/7,\yone+6*\ytwo/7) --
(\xone,\yone+6*\ytwo/7) -- (\xone,\yone);

\draw[blue,very thick] (1.5*\xtwo/7,2.0*\ytwo/7) circle (0.3*\xtwo/7);
\draw[fill=blue] (1.5*\xtwo/7,2.0*\ytwo/7) circle (0.15em);
\draw[black] (1.5*\xtwo/7,2.0*\ytwo/7) node[anchor=north] {$z$};

\draw[black] (\xone,3*\ytwo/7) node[anchor=east] {$\alpha_0$};
\draw[black] (3*\xtwo/7,3*\ytwo/7) node[anchor=west] {$\beta_0$};
\draw[black] (1.75*\xtwo/7,6*\ytwo/7) node[anchor=south] {$z_1\in\gamma_1\subset I_1$};
\draw[line width=1pt,color=blue] (1.2*\xtwo/7,6*\ytwo/7) arc(180:360:0.3*\xtwo/7);

\draw[black] (1.75*\xtwo/7,0*\ytwo/7) node[anchor=north] {$\gamma_2\subset I_2$};
\draw[fill=blue] (1.5*\xtwo/7,6*\ytwo/7) circle (0.15em);

\foreach \p in {.4,.9,1.2}   \draw[gray,very thick] (0.1*\xtwo/7+\p*\xtwo/7,0*\ytwo/7) -- (0.1*\xtwo/7+\p*\xtwo/7,6*\ytwo/7);
\draw[black] (1.1*\xtwo/7,3.9*\ytwo/7) node[anchor=west] {$Q_n$};
\draw[black] (1.45*\xtwo/7,4.35*\ytwo/7) node[anchor=west] {$\eta_r$};
\end{tikzpicture}
\end{center}\vskip -0.5cm
\caption{Relative locations of $\alpha_0, \beta_0$ and $z_1\in \gamma_1$ in $Q$. \label{fig:BLpaper}}\label{location}
\vskip -0.25cm
\end{figure}
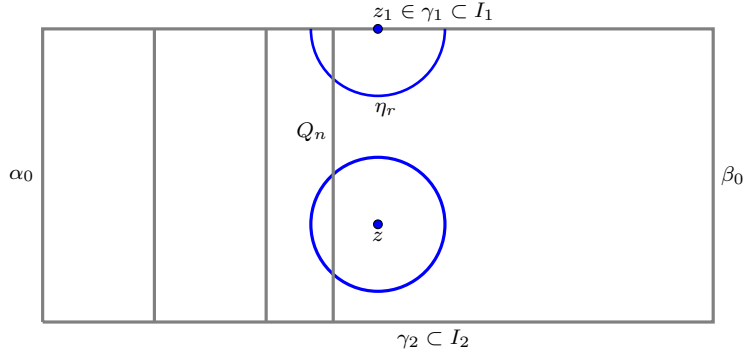
For any $r$ with  $0<r<\frac12\min\left\{{\rm dist}(z,\partial Q),\{{\rm dist} (z_1,\alpha_0\cup\beta_0)\right\}$, let $D_r(z)$ and $D_r(z_1)$ be the open  disk centered at $z$ and $z_1$ with radius $r$, respectively,  and set $\eta_r=\partial D_r(z_1)\cap \overline{Q}$. Then $\Gamma=(\partial Q\setminus D_r(z_1))\cup\eta_r$ is a simple closed curve disjoint from $\partial D_r(z)$. As $P_\infty\supset\{z,z_1\}$ and $\lim\limits_{n\rightarrow\infty}P_n=P_\infty$, we see that all but finitely many $P_n$ must intersect both $\partial D_r(z)$ and $\eta_r$ (see Figure~\ref{fig:BLpaper} for relative locations of $\eta_r$ and $\partial D_r(z)$).
Applying Lemma~\ref{lem:CWT}, we see that   $P_n\setminus\left(D_r(z)\cup D_r(z_1)\right)$ has a component $Q_n$ intersecting each of the sets $\partial D_r(z)$ and $\eta_r$. Let $A \subset Q$ be the compact subset with $\partial A=\Gamma\cup\partial D_r(z)$. Then $A$ is an annulus satisfying $P_n\setminus\left(D_r(z)\cup D_r(z_1)\right)=P_n\cap A$, which implies that  $Q_n$ is also a component of $A\cap K$.
By going to an appropriate subsequence, if necessary, we may assume that $Q_n$ $(n\ge1)$ converges to a continuum $Q_\infty$ with $Q_\infty \cap \partial D_r(z)\not=\emptyset$ and $Q_\infty \cap \eta_r \not=\emptyset$. Thus, choosing $z'\in Q_\infty \cap \partial D_r(z)$ and  $z_1'\in Q_\infty \cap \eta_r$ we have $(z',z_1')\in R_K$ by Lemma \ref{lem:Rk=RkQ}. Since $|z-z'|=|z_1-z_1'|=r$, and $r$ can be chosen arbitrarily small, this implies $(z,z_1)\in \overline{R_K}$, and the claim is proved.

Now we may set $C_y=P_\infty$. Since $z \in P_\infty \setminus \partial A$ and $z_1 \in  P_\infty \cap \partial Q$ are arbitrary, it is immediate that $\{x,y\}\subset C_y\subset\Dc(x)$.
\end{proof}

In order to complete the proof of Proposition~\ref{connected_fiber} we need the following result from continuum theory.

\begin{lemm}[{\cite[p.~278, Lemma~13.2]{Nadler92}}]\label{lem:lemNadler}
Let $X$ be a compactum and $\mathcal{D}$ an usc decomposition of $X$. Let $\mathcal{C}=\{C\subset X\colon C \ \text{is\ a\ component\ of\ some}\ D\in\mathcal{D}\}$. Then $\mathcal{C}$ is an usc decomposition of $X$ into subcontinua.
\end{lemm}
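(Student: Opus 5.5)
We verify the two requirements for $\mathcal{C}$ directly: that it is a partition of $X$ into subcontinua, and that it is upper semicontinuous. The partition part is immediate. Every $x\in X$ lies in exactly one $D\in\mathcal{D}$ and in exactly one component of that $D$. Each $D\in\mathcal{D}$ is closed in the compactum $X$, hence compact, so its components are compact connected sets, i.e.\ subcontinua. All the work is in the upper semicontinuity.

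Fix $C\in\mathcal{C}$, the component of some $D\in\mathcal{D}$, and an open set $U\supset C$. The first step is to find a set $F$, closed and open in $D$, with $C\subset F\subset U$. We use the standard fact that in a compactum components coincide with quasicomponents, so $C$ is the intersection of all sets clopen in $D$ that contain it. The set $D\setminus U$ is compact and covered by the complements of these clopen sets. Finitely many of them suffice, and intersecting the corresponding finitely many clopen sets gives the desired $F$. Alternatively, one can get $F$ from Lemma~\ref{lem:CWT}, applied in $D$ with $A_1=C$ and with $A_2$ taken as the closed set $D\setminus U$, by separating $C$ from $D\setminus U$. I expect this separation step to be the only real obstacle; everything afterwards is formal.

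Now $D=F\cup(D\setminus F)$ with both pieces compact and disjoint. By normality of $X$ we choose disjoint open sets $U_1\supset F$ and $U_2\supset D\setminus F$, and we shrink $U_1$ so that $U_1\subset U$. Upper semicontinuity of $\mathcal{D}$ then gives an open set $V$ with $D\subset V\subset U_1\cup U_2$ such that $V$ is a union of elements of $\mathcal{D}$.

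Put $W=V\cap U_1$. This set is open, contains $C$ (since $C\subset F\subset U_1$ and $C\subset D\subset V$), and is contained in $U$. It remains to show that $W$ is a union of elements of $\mathcal{C}$. Take $p\in W$ and let $C'$ be the element of $\mathcal{C}$ containing $p$, a component of some $D'\in\mathcal{D}$. Since $p\in V$ and $V$ is a union of elements of $\mathcal{D}$, we get $D'\subset V\subset U_1\cup U_2$. The set $C'$ is connected and contained in the union of the disjoint open sets $U_1$ and $U_2$, so it lies entirely in one of them. It meets $U_1$ at $p$, hence $C'\subset U_1$, and therefore $C'\subset W$. Thus $W$ is an open union of elements of $\mathcal{C}$ with $C\subset W\subset U$, which proves that $\mathcal{C}$ is an usc decomposition into subcontinua.
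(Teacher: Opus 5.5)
Your proof is correct and complete. The separation step works either way you suggest: components coincide with quasicomponents in the compactum $D$, or you can use the Cut Wire Theorem (Lemma~\ref{lem:CWT}), which applies because any connected subset of $D$ meeting $C$ lies in $C\subset U$. The rest is the standard argument and is sound: take disjoint open sets $U_1\subset U$ and $U_2$ around $F$ and $D\setminus F$, take a saturated open set $V\subset U_1\cup U_2$, and set $W=V\cap U_1$; each element of $\mathcal{C}$ inside $V$ is connected, so it lies entirely in $U_1$ or in $U_2$.

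The paper does not prove this lemma; it only cites Nadler, so your argument gives a self-contained proof of the quoted result.
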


We are able to finish the proof of Proposition~\ref{connected_fiber} as follows.

\begin{proof}[{\bf Proof of Proposition~\ref{connected_fiber}}]
Let $\mathcal{C}_K=\{C\colon C \ \text{is\ a\ component\ of\ some}\ D\in\mathcal{D}_K\}$. By Lemma~\ref{lem:lemNadler}, we know that $\mathcal{C}_K$ is an usc decomposition of $K$ into subcontinua that refines $\mathcal{D}_K$.
By Lemma~\ref{connected_fiber_1}, we also know that for each $x\in K$ there is a connected set $N_x$ such that $R_K(x)\subset N_x\subset\Dc_K(x)$. Thus, by connectedness of $N_x$, $R_K(x)\subset\mathcal{C}_K(x)$ and, since $\sim_K$ is the smallest closed equivalence relation that contains $R_K$, we have $\Dc_K(x)\subset\mathcal{C}_K(x)$. This establishes the theorem.
\end{proof}

As a next step in the proof of  Theorem~\ref{core} we establish the following result.

\begin{prop}\label{PC_quotient}
$\Dc_K$ is a Peano decomposition of $K$.
\end{prop}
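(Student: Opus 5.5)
We must show that the quotient $\Dc_K$ is a Peano compactum. Proposition~\ref{connected_fiber} already makes $\Dc_K$ an usc decomposition into subcontinua, so the quotient $Y=\Dc_K$ is a compactum and $\pi:K\to Y$ is a monotone closed map. Two things remain: (i) $Y$ is an $F$-compactum, and (ii) every component of $Y$ is locally connected.

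For (i), suppose there are $\varepsilon>0$ and infinitely many distinct components $Y_n$ of $Y$ with $\mathrm{diam}\,Y_n>\varepsilon$. Because $\pi$ is monotone, the sets $X_n=\pi^{-1}(Y_n)$ are distinct components of $K$. Since $\pi$ is uniformly continuous, $\mathrm{diam}\,X_n$ is bounded below, so after passing to a subsequence we may assume $X_n\to X_\infty$ in the Hausdorff distance, with $X_\infty$ a nondegenerate continuum. I would show that $X_\infty$ lies in a single class of $\sim$, i.e.\ that $\Fc_K$ refines $\Dc_K$. Choose distinct $x,y\in X_\infty$. Locally, take a small quadrilateral, or a thin annulus as in Lemma~\ref{lem:Rk=RkQ}, around a short subarc near $x$. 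Arguing exactly as in the claim in the proof of Lemma~\ref{connected_fiber_1}, via the Cut Wire Theorem (Lemma~\ref{lem:CWT}) applied in the annulus $\overline{D_{2r}(x)}\setminus D_r(x)$, we see that the pieces of $X_n$ crossing that annulus are components of $K\cap A$, and these pieces are pairwise distinct for distinct $n$. Lemma~\ref{lem:Rk=RkQ} then gives pairs of $R_K$ at distance about $r$ from $x$. Since $\sim$ is closed, this shows that points of $X_\infty$ that are arbitrarily close together are related, and a chaining argument along the continuum $X_\infty$ shows that the whole of $X_\infty$ lies in one class. Consequently each such limit continuum collapses to a point. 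On the other hand $\pi(X_n)=Y_n$ has diameter greater than $\varepsilon$, and $\pi(X_n)\to\pi(X_\infty)$ along a subsequence (upper semicontinuity gives $\limsup \pi(X_n)\subset\pi(X_\infty)$). I would have to reconcile this with $\pi(X_\infty)$ being a single point. The cleanest route is to apply the argument not to $X_\infty$ itself but to a limit of subcontinua of $X_n$ that join two $\varepsilon$-separated classes.

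For (ii), suppose a component $Y_0$ of $Y$ fails to be locally connected at a point $\pi(\delta)$. The standard characterization then produces a convergence continuum: a sequence of components $C_n$ of $Y_0\setminus U$ converging to a nondegenerate continuum $C_\infty\ni \pi(\delta)$, where $U$ is a small open neighbourhood. I would pull this back. Take a closed disk neighbourhood of $\delta$ in $\Sc$, use the strict planarity of a small neighbourhood to work inside a planar elementary region, and obtain infinitely many distinct components of $K\cap \overline W$ crossing between two boundary curves and accumulating on a continuum that meets at least two distinct classes. Lemma~\ref{lem:good-R} then puts pairs of points of distinct classes into $R_K$, which is a contradiction.

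The main obstacle is this pull-back step. Components of $\pi^{-1}(C_n)\cap\overline W$ need not be components of $K\cap\overline W$, because other parts of $K$ may connect them inside $W$. To handle this I would use the Trapping Lemma~\ref{lem:trapping} together with Lemma~\ref{lem:arcalphaSquare}. The separating arcs isolate the crossing pieces, so that the crossing sub-continua do become genuine components of $K$ intersected with a suitable quadrilateral. The same mechanism also fixes the passage to subcontinua in (i).
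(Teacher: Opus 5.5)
Your outline has a genuine gap at the step where the proof has to do its work. You never produce a pair in $R_K$ whose two points are \emph{forced} to lie in different classes of $\sim$.

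In (i), annuli $\overline{D_{2r}(x)}\setminus D_r(x)$ centered at a point $x\in X_\infty$ give pairs $(x',y')\in R_K$ with $x'\in\partial D_r(x)$ and $y'\in\partial D_{2r}(x)$. As $r\to0$, closedness only yields $(x,x)$, which says nothing. No chaining argument upgrades such pairs to ``$X_\infty$ lies in one class'': the closed equivalence on $[0,1]^2$ whose classes are vertical segments has related pairs at every scale near every point. Note also that if you did know $X_\infty$ lies in one class, then $\limsup\pi(X_n)\subset\pi(X_\infty)$ would already contradict $\mathrm{diam}\,Y_n>\varepsilon$; there is nothing to reconcile.

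In (ii), the fiber $\delta$ over the point of non-local connectedness need not be strictly planar, so you cannot simply work in a planar neighborhood of it. Moreover, knowing that a limit continuum meets two classes does not tell you that the particular points $x\in J_{i_1}\cap P_\infty$ and $y\in J_{i_2}\cap P_\infty$ supplied by Lemma~\ref{lem:good-R} lie in different classes.

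Three ingredients are missing.

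First, move along the convergence continuum, which is uncountable, to a point $u$ in the interior of a closed quotient neighborhood $U_0$ such that $\pi^{-1}(u)$ is strictly planar. This is exactly what Corollary~\ref{counting_lemma} provides.

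Second, use upper semicontinuity to choose brick-wall neighborhoods $N_0\subset W_0\subset\mathrm{Int}(N)$ of $\pi^{-1}(u)$. These should lie inside a planar open set $W$ with $W\cap K\subset\pi^{-1}(\mathrm{Int}(U_0))$, and be such that no class meets both $W_0$ and $\partial N$, as in \eqref{disjoint_atoms}. Then any $R_K$-pair obtained from Lemma~\ref{lem:good-R} in the elementary region $\mathrm{Int}(N)\setminus N_0$, with one point on $\partial N_0$ and the other on $\partial N$, is automatically split.

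Third, the pull-back problem you identify is not solved by the Trapping Lemma. That lemma presupposes that the pieces are already components, and arcs in the complement of $K$ cannot separate pieces that are joined through $K$. The actual fix is simpler: take the $Q_n$ to be components of $U_0$ itself. Since $\pi$ is monotone, the sets $\pi^{-1}(Q_n)$ are components of $\pi^{-1}(U_0)\supset K\cap N$. Hence each component of $\pi^{-1}(Q_n)\cap(N\setminus\mathrm{Int}(N_0))$ is a component of $K\cap(N\setminus\mathrm{Int}(N_0))$.

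Failure of the $F$-property and failure of local connectedness both yield such a $U_0$. So a single argument covers your (i) and (ii), which is how the paper proceeds.
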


In the proof of this proposition, we have to make sure that certain elements of $\Dc_K$ are strictly planar in the sense of Definition~\ref{def:SP}. To assure the existence of such elements we need to prove that there do not exist too many non strictly planar elements in $\Dc_K$. In particular, we prove that even a general usc decomposition into subcontinua can contain only finitely many non strictly planar continua. In the sequel, we denote the Euler characteristic by~$\chi$. We need the following auxiliary result.

\begin{lemm}\label{lem:non-planar subcontinua}
If $\delta_0,\delta_1\subset\Sc$ are disjoint continua
and $\delta_1$ is {\em not} strictly planar, then there is a surface with boundary $\Sc_1$, whose interior contains $\delta_0$, whose closure is disjoint from $\delta_1$, and whose boundary consists of $q$ Jordan curves, such that $\chi(\Sc_1)+q>\chi(\Sc)$.
\end{lemm}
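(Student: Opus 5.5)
Since $\delta_0$ and $\delta_1$ are disjoint compacta, I will take $\Sc_1$ to be a compact subsurface that is a slight thickening of $\delta_0$ and avoids $\delta_1$. The inequality will then come from Euler characteristic additivity, with the non-planarity of $\delta_1$ forcing a topological defect in the complement.

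\textbf{Step 1 (building $\Sc_1$).} Fix a triangulation of $\Sc$ so fine that every closed simplex meeting $\delta_0$ is disjoint from $\delta_1$; this is possible because ${\rm dist}(\delta_0,\delta_1)>0$. Let $N$ be the union of the closed simplices of the second barycentric subdivision that meet $\delta_0$. This is a regular neighbourhood, hence a compact surface with boundary. Its interior contains $\delta_0$ and it is disjoint from $\delta_1$. Let $\Sc_1$ be the component of $N$ containing the continuum $\delta_0$. Then $\partial\Sc_1$ is a disjoint union of $q\ge1$ Jordan curves (if $q=0$ we would have $\Sc_1=\Sc$, which is impossible since $\delta_1\neq\emptyset$).

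\textbf{Step 2 (Euler characteristic bookkeeping).} Let $\Sc_2=\overline{\Sc\setminus\Sc_1}$. Then $\Sc_2$ is a compact surface with boundary whose boundary is the same $q$ curves. Since $\chi$ of a circle is $0$, additivity gives
\[
\chi(\Sc)=\chi(\Sc_1)+\chi(\Sc_2).
\]
So the desired inequality $\chi(\Sc_1)+q>\chi(\Sc)$ is equivalent to $\chi(\Sc_2)<q$. Write $\Sc_2=\bigcup_{j=1}^m C_j$ with components $C_j$. Component $C_j$ has $q_j\ge1$ boundary curves and $\sum_j q_j=q$, and $\chi(C_j)=2-2g_j-q_j$ (or $2-c_j-q_j$ in the nonorientable case). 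Hence $\chi(C_j)\le 2-q_j$, with equality exactly when $C_j$ is planar, i.e.\ a sphere with holes. Summing,
\[
\chi(\Sc_2)\le 2m-q,
\]
so this crude bound alone does not suffice. I therefore refine the choice of $\Sc_1$: replace $\Sc_1$ by $\Sc_1$ together with all components $C_j$ of $\Sc_2$ that are disjoint from $\delta_1$. This keeps $\delta_0$ in the interior and $\delta_1$ outside. Afterwards the continuum $\delta_1$ lies in exactly one complementary component $C$, because $\delta_1$ is connected and misses $\partial\Sc_1$. Now $\Sc_2=C$ and $q=q_C$, so the goal becomes $\chi(C)<q_C$.

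\textbf{Step 3 (using non-planarity).} Suppose to the contrary that $\chi(C)\ge q_C$. Since $\chi(C)\le 2-q_C$, this forces $q_C=1$ and $\chi(C)=1$, so $C$ is a closed disk, or else $C$ is a sphere with $q_C\le1$ holes. In either case the interior of $C$ is an open subset of $\Sc$ homeomorphic to an open subset of $\bbC$, and it contains $\delta_1$. This gives $\delta_1$ a planar neighbourhood, contradicting the assumption that $\delta_1$ is not strictly planar. Therefore $\chi(C)\le q_C-1$, and hence $\chi(\Sc_1)+q=\chi(\Sc)-\chi(C)+q_C>\chi(\Sc)$.

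Let me double-check the case analysis. If $C$ is a disk, then $\chi=1$ and $q=1$, so $\chi(C)\ge q_C$ holds and $C$ is planar, as required for the contradiction. If $C$ is an annulus or a planar surface with $q\ge2$ holes, then $\chi=2-q<q$ already, so there is no contradiction to derive and the inequality holds directly. For example, an annulus gives $0<2$.

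The main obstacle I expect is the rigorous construction in Step 1. Specifically, I must make sure that $\Sc_1$ is a genuine compact surface with Jordan-curve boundary, and that the merging in Step 2 preserves this. Both follow from regular-neighbourhood theory for PL surfaces, which applies since every closed surface is triangulable.
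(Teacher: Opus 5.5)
Your argument is correct in substance and is essentially the paper's proof with the roles of $\delta_0$ and $\delta_1$ swapped.

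The paper thickens $\delta_1$ into a connected subsurface $M_2$ and takes $\Sc_1=\overline{C_1}$, where $C_1$ is the complementary component containing $\delta_0$. The $\delta_1$-side $\Sc\setminus C_1$ is then automatically connected, and capping it with $q$ disks gives a closed surface that cannot be a sphere. You instead thicken $\delta_0$, make the $\delta_1$-side connected by absorbing the other complementary components, and use $\chi(C)\le 2-q_C$ with equality only for planar $C$. This is the same fact in different form. Since no sphere with holes can contain $\delta_1$ in its interior, you actually get $\chi(C)\le 1-q_C$ and hence the paper's sharper bound $\chi(\Sc_1)+q\ge\chi(\Sc)+2q-1$; your $\chi(C)\le q_C-1$ already suffices.

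One step in Step 1 is wrong as written. The union $N$ of the closed simplices of the second barycentric subdivision $\Kc''$ that meet an arbitrary continuum $\delta_0$ is not a regular neighbourhood, and it need not be a surface. Here is why:
\begin{itemize}
\item Let $T_1,\dots,T_m$ be the triangles of $\Kc''$ around a vertex $v$, in cyclic order.
\item $\delta_0$ can enter $T_1$ and $T_3$ only through their open edges opposite $v$, with the two visits joined by an arc running outside the closed star of $v$.
\item Then near $v$ the set $N$ is $T_1\cup T_3$, which is pinched at $v$.
\end{itemize}
Passing to finer subdivisions does not help, since the same configuration can occur at every level. This is exactly the defect the paper repairs by adding the $\Kc^{j+1}$-stars of the finitely many local cut points of $M_1$.

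In your setting the clean fix is as follows. Let $L$ be the subcomplex of $\Kc$ generated by the simplices meeting $\delta_0$. Take $N$ to be its second-derived neighbourhood, i.e.\ the union of the simplices of $\Kc''$ that meet $|L|$. Then:
\begin{itemize}
\item $\delta_0\subset|L|\subset{\rm int}(N)$;
\item $N$ is a compact surface with Jordan-curve boundary, by regular neighbourhood theory;
\item $N\cap\delta_1=\emptyset$, provided the mesh of $\Kc$ is less than $\frac12{\rm dist}(\delta_0,\delta_1)$.
\end{itemize}
With that change, Steps 2 and 3 go through exactly as you wrote them.
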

\begin{proof}
Suppose $\Sc$ is equipped with a triangulation $\Kc^0$. For $l\ge1$ recursively define $\Kc^l$ as the  {\it barycentric subdivision} of $\Kc^{l-1}$. These subdivisions will be needed to construct $\Sc_1$.
Fix $j$ such that the mesh of  $\Kc^{j}$ is smaller than one third of the distance between $\delta_0$ and $\delta_1$. Collect the $2$-simplexes of $\Kc^{j}$ intersecting $\delta_1$. These simplexes form a subcomplex, 
which provides a triangulation of  a closed subset $M_1$ of $\Sc\setminus \delta_0$.
Then, being the union of finitely many simplexes, $M_1$ has at most finitely many {\it local cut points}, say $v_1,\ldots,v_n$. Notice that each of these $v_i$  is a vertex of $\Kc^j$. Collecting all the $2$-simplexes of $\Kc^{j+1}$ that have $v_i$ as a vertex or are contained in a $2$-simplex of $\Kc^j$ lying in $M_1$, we obtain a subcomplex of $\Kc^{j+1}$ that provides a triangulation of a surface with boundary, denoted by $M_2$, which contains $M_1$.
Now consider the complement $\Sc\setminus M_2$, which has finitely many components $C_1,\ldots, C_n$. The closure of $C_i$ in $\Sc$ is a surface with boundary for each $i\in\{1,\ldots,n\}$. W.l.o.g.\ we may assume that $\delta_0 \subset C_1$. Notice that every boundary component of $\overline{C_1}$ is a one-dimensional manifold hence is a Jordan curve. Therefore, the boundary $\partial C_1$ consists of $q\ge1$ Jordan curves $J_1,\ldots,J_q$.  Thus $\Sc\setminus C_1$ is also a surface with boundary whose interior contains $\delta_1$.
Consider the disjoint union of $\Sc\setminus C_1$ and $q$ topological disks $D_1,\ldots, D_q$. Pick a homeomorphism $f_i: \partial D_i\rightarrow J_i$ for $1\le i\le q$ and form an identification space $E$ that uses those $f_i$ as attaching maps, so that every $D_i$ is glued to $\Sc\setminus C_1$ along $J_i$. See for instance \cite[p.71]{Armstrong}.
It follows that $E$ is a closed surface containing the not strictly planar set $\delta_1$. Thus $E$ is not a sphere, hence, its Euler characteristic satisfies  $\chi(E)=\chi(\Sc\setminus C_1)+q<2$. Clearly, $\chi(\partial C_1)=0$, because $\partial C_1$ is a union of disjoint Jordan curves. Since $\chi(\overline{C_1})+\chi(\Sc\setminus C_1)=\chi(\Sc) + \chi(\partial C_1) = \chi(\Sc)$,  we have
\[
\chi(\overline{C_1})+q=\chi(\Sc)-\chi(\Sc\setminus C_1)+q>\chi(\Sc)+2q-2\ge\chi(\Sc).
\]
Thus, setting $\Sc_1=\overline{C_1}$, the result is proved.
\end{proof}

We will need the following corollary of Lemma~\ref{lem:non-planar subcontinua}.

\begin{coro}[\bf Counting Lemma]\label{counting_lemma}
If $\Dc$ is an usc decomposition of $K\subset \Sc$ into subcontinua, then all but finitely many of its elements are  strictly planar.
\end{coro}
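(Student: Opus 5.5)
We argue by contradiction, using Lemma~\ref{lem:non-planar subcontinua} repeatedly to produce surfaces with boundary whose Euler characteristic grows without bound. Suppose $\Dc$ contains infinitely many elements that are not strictly planar. Elements of $\Dc$ are disjoint subcontinua of $K\subset\Sc$, so we may pick pairwise distinct non strictly planar elements $\delta_1,\delta_2,\ldots,\delta_m$ for any given $m$. The aim is to cut $\Sc$ into pieces, each containing some non strictly planar set, so that additivity of $\chi$ yields a contradiction once $m$ exceeds a bound depending only on $\chi(\Sc)$.

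First we isolate the sets. Since the $\delta_i$ are pairwise disjoint compacta, we choose disjoint closed neighborhoods $N_1,\ldots,N_m$. Each $N_i$ is a compact subsurface with boundary obtained as in the proof of Lemma~\ref{lem:non-planar subcontinua}: take $2$-simplexes of a fine barycentric subdivision meeting $\delta_i$, then thicken at local cut points. We take the component containing $\delta_i$, which is connected since $\delta_i$ is.

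Next we show that $N_i$ is non-planar. Otherwise $\delta_i$ would have a planar neighborhood, contradicting the assumption. A compact connected surface with boundary that is not planar has $\chi(N_i)+q_i\le 0$, where $q_i$ is the number of its boundary curves. Indeed, capping the boundary curves gives a closed surface $E_i$ with $\chi(E_i)=\chi(N_i)+q_i$, and $E_i$ is not a sphere, because a sphere minus disks is planar. Hence $\chi(E_i)\le 1$. In fact the argument of Lemma~\ref{lem:non-planar subcontinua} gives $\chi(N_i)+q_i<2$, so $\chi(N_i)\le 1-q_i$.

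Finally we count with Euler characteristic. Let $M=\Sc\setminus\bigcup_i\mathrm{int}(N_i)$. This is a compact surface with boundary consisting of $q=\sum q_i$ curves, and it has at most $q$ components, each having nonempty boundary. Additivity (the $N_i$ meet $M$ in circles, which have $\chi=0$) gives
\[
\chi(\Sc)=\chi(M)+\sum_i\chi(N_i)\le \chi(M)+\sum_i(1-q_i).
\]
Each component of $M$ has Euler characteristic at most $1$, since a compact connected surface with nonempty boundary has $\chi\le1$. Also, each component of $M$ contains at least one boundary curve, so $\chi(M)\le\#\{\text{components of }M\}\le q$. This only yields $\chi(\Sc)\le m$, which is useless.

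\textbf{This crude count is where the main obstacle lies.} The refinement we would try first is to use $\chi(E_i)\le1$ more carefully. If $E_i$ is a projective plane, then $\chi(N_i)=1-q_i$; if $E_i$ is orientable of positive genus, then $\chi(N_i)\le -q_i$. Equivalently, we compute $\chi(\Sc)=\chi(\widehat M)+\sum_i\chi(E_i)-2q$, where $\widehat M$ is $M$ with all $q$ boundary curves capped. Then $\chi(\widehat M)\le 2c$, with $c$ the number of components of $M$, and $c\le$ the number of components of the graph joining the $N_i$ through $M$. Since $\Sc$ is connected, this gives $c\le q-m+1$. Hence
\[
\chi(\Sc)\le 2(q-m+1)+m-2q=2-m,
\]
and so $m\le 2-\chi(\Sc)$. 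This bound on the number of non strictly planar elements proves the Counting Lemma. We expect verifying the component count $c\le q-m+1$ (that is, connectivity of $\Sc$ forces many boundary gluings) to be the delicate step.
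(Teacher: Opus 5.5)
Your argument is correct, and it takes a genuinely different route from the paper's.

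\textbf{The paper's route.} It uses upper semicontinuity. It extracts a Hausdorff-convergent sequence $\delta_k$ of non strictly planar elements and takes the element $\delta_0\in\Dc$ containing the limit. It then applies Lemma~\ref{lem:non-planar subcontinua} to $\delta_0$ and $\delta_1$ and caps the side containing $\delta_0$. This gives a closed surface $\Sc_1^*$ with $\chi(\Sc_1^*)>\chi(\Sc)$ that still contains a tail of the sequence. Iterating makes $\chi$ increase without bound, which is impossible.

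\textbf{Your route.} You cut out $m$ pairwise disjoint non-planar pieces at once and do a single global count,
$\chi(\Sc)=\chi(\widehat M)+\sum_i\chi(E_i)-2q\le 2c+m-2q$.
This is the multi-piece version of the Euler characteristic computation in Lemma~\ref{lem:non-planar subcontinua}.

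\textbf{What each buys.} The paper's route is shorter once that lemma is available. However, it needs the usc hypothesis to keep infinitely many elements on one side of each cut, and it produces no count. Yours never uses upper semicontinuity, so it applies to any family of pairwise disjoint continua. It gives the explicit bound $\xi(\Sc)\le 2-\chi(\Sc)$ for the number $\xi(\Sc)$ of Remark~\ref{2D_Geometrization}. If $\Sc$ is orientable, the $E_i$ are orientable non-spheres, so $\chi(E_i)\le 0$, and the same count gives $m\le (2-\chi(\Sc))/2$.

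\textbf{Repairs needed.}
\begin{itemize}
\item The step you flag as delicate is immediate. Form the graph whose vertices are $N_1,\dots,N_m$ and the $c$ components of $M$, with one edge for each of the $q$ boundary circles. It is connected because $\Sc$ is connected, so $q\ge m+c-1$.
\item Assume $m\ge 2$ explicitly, so that every $N_i$ is a proper subsurface with $q_i\ge 1$. For $m=1$ one may have $N_1=\Sc$, e.g.\ $\delta_1=\Sc$ when $\Sc$ is a sphere, and there the bound fails.
\item Your inline claim $\chi(N_i)+q_i\le 0$ is wrong: a M\"obius band gives $1$. You only use $\chi(E_i)\le 1$, which is correct, so this does not affect the proof.
\item The ``crude count'' paragraph can simply be deleted.
\end{itemize}
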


\begin{proof}
Assume on the contrary that there are infinitely many elements $\delta_k\in\Dc$, $k\ge 1$, that are not strictly planar. We may assume w.l.o.g.\ that the limit $\delta_\infty=\lim_{k\rightarrow\infty}\delta_k$  w.r.t.\ Hausdorff metric exists. Since $\Dc$ is usc, we can find $\delta_0\in\Dc$ that contains $\delta_\infty$. By Lemma~\ref{lem:non-planar subcontinua} we can find a surface with boundary $\Sc_1$ with $\Sc_1\cap\delta_1=\emptyset$ such that its interior contains $\delta_0\supset\delta_\infty$ and its boundary consists of $q\ge1$ disjoint Jordan curves. Pick $k_0\ge2$ such that $\delta_k \subset \Sc_1$ for each $k\ge k_0$. As is done in the proof of Lemma \ref{lem:non-planar subcontinua}, we may obtain a  closed surface $\Sc_1^*$ by gluing $q$ topological disks  $D_1,\ldots, D_q$ to  $\Sc_1$  along $J_i$, which then satisfies $\chi(\Sc_1)+q>\chi(\Sc)$.
This implies that $\chi(\Sc_1^*)=\chi(\Sc_1)+q > \chi(\Sc)$.
Since $\Sc_1^*$ contains $\delta_k$ with $k\ge k_0$, none of which is strictly planar, we may repeat the above construction and obtain a closed surface $\Sc_2^*$ with $\chi(\Sc_2^*)>\chi(\Sc_1^*)$ that contains infinitely many $\delta_k$. Iterating this construction indefinitely will yield an infinite sequence $(\Sc_i^*)_{i\ge 1}$ of closed surfaces with strictly increasing Euler characteristic. Since this is impossible, we have proved the result.
\end{proof}

\begin{rema}\label{2D_Geometrization}
The above proof involves Euler characteristics of certain closed surfaces and their triangulations. Using similar arguments, we can associate to every closed surface $\Sc$ a largest integer $\xi(\Sc)\ge0$ such that there is a collection of disjoint continua $N_1,\ldots,N_{\xi(\Sc)}\subset \Sc$ none of which is strictly planar. We conjecture that
\[
\xi(\Sc)=\left\{\begin{array}{ll}\frac{2-\chi(\Sc)}{2}&\text{when}\ \Sc\ \text{is\ orientable,}\\ \frac{2-\chi(\Sc)}{2} + 1&\text{when}\ \Sc\ \text{is not orientable.}\end{array}
\right.
\]
\end{rema}

\begin{proof}[{\bf Proof of Proposition~\ref{PC_quotient}}]
By Proposition~\ref{connected_fiber}, $\mathcal{D}_K$ is an usc decomposition into subcontinua. Thus it suffices to show that for any  decomposition $\Dc$ of $K$ that is not Peano, there exist  $(x_1,x_2)\in R_K$ satisfying $\Dc(x_1)\cap \Dc(x_2)=\emptyset$. 
Recall that $\Dc(x)$ for any $x\in K$ is the element of  $\Dc$ that contains $x$, which is a subcontinuum of $K$.

Let $\pi: K\rightarrow\Dc$ be the projection of $K$ onto the quotient space~$\Dc$, which is a closed, monotone map. 
As the quotient space $\Dc$ is not a Peano compactum, we can find either a component which
is not locally connected at one of its points $u_0$ or an infinite sequence of
distinct components of diameter no less than a constant $C>0$ such that their limit under
Hausdorff distance contains a point $u_0$. Either way, we can find 
a closed neighborhood $U_0$ of $u_0$ such that the component of $U_0$ containing $u_0$ is no longer a neighborhood of
$u_0$.
Then $U_0$ has infinitely many components $Q_n$ $(n\ge1)$ satisfying two requirements. First,  every $Q_n$ intersects $\partial U_0$. Second, the limit $\lim\limits_{n\rightarrow\infty}Q_n=Q_\infty$ under Hausdorff distance is a nondegenerate continuum containing $u_0$. Clearly, $Q_\infty\cap\partial U_0\ne\emptyset$ and $Q_\infty\cap {\rm Int}(U_0)$ is an uncountable set.
Moreover, $(\pi^{-1}(Q_n))_{n\ge 1}$ is an infinite sequence of pairwise disjoint subcontinua in $K$. By going to an appropriate subsequence, if necessary, we may assume that this sequence is also convergent  under Hausdorff distance.
Because $Q_\infty$ is uncountable, by  Corollary~\ref{counting_lemma}, we can find $u\in  Q_\infty$ such that $\pi^{-1}(u)$ is a strictly planar continuum. That is to say, we can find an open planar set $W\subset \Sc$ with $\pi^{-1}(u)\subset W \subset \pi^{-1}(U_0)$.
By planarity we may regard $W$ as a subset of the unit disk  $\bbD\subset\bbC$ and fix a brick-wall tiling $\Tc_\varepsilon$ of size $\varepsilon$, where $\varepsilon$ is  less than one third the distance between $\pi^{-1}(u)$ and $\partial W$.
The union of all the tiles intersecting $\pi^{-1}(u)$ is a locally connected continuum $N$ with no cut point. Notice that $\bbD\setminus N$ has finitely many components. Each of them is a Jordan domain by Lemma~\ref{lem:Torhorst}. 
By the upper semi-continuity of $\Dc$, we can find an open set $W_0$ in $\Sc$   such that $\pi^{-1}(u)\subset W_0\subset \overline{W_0}\subset {\rm Int}(N)$ and  all the elements of $\Dc$ intersecting $W_0$ are disjoint from $\partial N$. In other words, \begin{equation}\label{disjoint_atoms}
\Dc(x_1)\cap\Dc(x_2)=\emptyset\quad\text{for\ any}\ x_1\in W_0\ \text{and}\ x_2\in \partial N.
\end{equation}
Now, fix another brick-wall tiling  $\Tc_{\varepsilon'}$ of size $\varepsilon'<\varepsilon$, where $\varepsilon'$ is less than one third of the distance between  $\pi^{-1}(u)$ and $N\setminus W_0$. The union of the tiles of $\Tc_{\varepsilon'}$ intersecting $\pi^{-1}(u)$ is  a locally connected continuum $N_0$ with no cut point. Clearly, we have $N_0\subset W_0$.
Applying Lemma~\ref{lem:Torhorst} again, we see that every component of $N\setminus N_0$ is bounded by a Jordan curve. It follows that  ${\rm Int}(N)\setminus N_0$ is an elementary region in the sense of Definition \ref{elementary-region}.
Recall that $\big\{\pi^{-1}(Q_n)\big\}_{n\ge 1}$ is convergent under the Hausdorff distance and that its limit intersects $\pi^{-1}(u)$. It follows that all but finitely many of the continua $\pi^{-1}(Q_n)$ intersect both $\partial N_0$ and $\partial N$.
For any of those $n\ge1$, we can use Lemma~\ref{lem:CWT} to find a component $P_n$ of $\pi^{-1}(Q_n) \cap (N\setminus{\rm Int}(N_0))$ that intersects $\partial N$ and $\partial N_0$ both. Because $\partial (N\setminus N_0)=\partial N \cup\partial N_0$ is a separation and $\partial N$ as well as $\partial N_0$ are finite unions of disjoint Jordan curves, we can find Jordan curves $\gamma_1 \subset \partial N_0$ and $\gamma_2 \subset \partial N$ such that there are infinitely many $P_n$ that intersect $\gamma_1$ and $\gamma_2$ both. Therefore, by Lemma \ref{lem:good-R} we can further find $x_1 \in \gamma_1\subset \partial N_0\subset W_0$ and $x_2 \in \gamma_2\subset\partial N$ with $(x_1,x_2)\in R_K$. By equation \eqref{disjoint_atoms}, this  proves the result.
\end{proof}

To obtain Theorem~\ref{core}, it suffices to combine Proposition \ref{PC_quotient} with the following result.

\begin{prop}\label{pro:core_1}
$\Dc_K$ refines every Peano decomposition of $K$.
\end{prop}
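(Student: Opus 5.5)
Let $\Dc$ be a Peano decomposition of $K$ with projection $\pi:K\to\Dc$. Since $\Dc$ is usc, the relation $\{(x,y)\in K\times K: \Dc(x)=\Dc(y)\}$ is a closed equivalence relation on $K$. As $\sim_K$ is the smallest closed equivalence relation containing $R_K$, it is enough to show $R_K$ is contained in it. In other words, we must show $\Dc(x)=\Dc(y)$ whenever $(x,y)\in R_K$. Then every class $\Dc_K(x)$ lies in $\Dc(x)$, so $\Dc_K$ refines $\Dc$.

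Fix $(x,y)\in R_K$ with $x\ne y$, and suppose $\Dc(x)\cap\Dc(y)=\emptyset$. Take a marked quadrilateral $(Q,I_1,I_2)$ as in Definition~\ref{R_K}. Its components $P_n$ of $K\cap Q$ meet both $I_1$ and $I_2$ and converge to $P_\infty\ni x,y$. Then $\pi(P_\infty)$ is a continuum in $\Dc$ containing $\pi(x)\neq\pi(y)$. The plan is to show that the continua $\pi(P_n)$ violate the Peano property of $\Dc$ near $\pi(P_\infty)$.

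To that end, choose a small closed neighbourhood $U$ of $\pi(x)$ in $\Dc$ not containing $\pi(y)$. The set $\pi^{-1}(U)$ is a closed neighbourhood of $\Dc(x)$ in $K$. Because $\Dc$ is a Peano compactum, the component $C$ of the component of $\Dc$ containing $\pi(x)$ is locally connected, and only finitely many components of $\Dc$ have large diameter. Hence there is a connected neighbourhood $V\subset U$ of $\pi(x)$ in $\Dc$, open in $\Dc$. This uses the standard fact that a Peano compactum is locally connected at each point relative to small neighbourhoods modulo finitely many components; the null-sequence condition handles components accumulating at $\pi(x)$. Then $\pi^{-1}(V)$ is a connected set, since $\pi$ is monotone and closed. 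It is a neighbourhood of $\Dc(x)\ni x$ in $K$, and it is disjoint from $\Dc(y)$.

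Now compare with the quadrilateral. The $P_n$ eventually come arbitrarily close to $x$, so for large $n$ both $P_n$ and $P_{n+1}$ meet $\pi^{-1}(V)$. Similarly, a connected set $\pi^{-1}(V')$ around $\Dc(y)$ meets them. The main obstacle is to turn this into a contradiction with the separation structure in $Q$. By Lemma~\ref{lem:arcalphaSquare}, distinct $P_n,P_m$ are separated in $Q$ by arcs from $I_1$ to $I_2$ in $Q\setminus K$. So a connected subset of $K$ meeting two of them must leave $Q$, and can only do so through $I_1\cup I_2$, since $K\cap\partial Q\subset I_1\cup I_2$. That alone is not contradictory, so I would shrink scales. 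Choose the neighbourhoods so small that $\pi^{-1}(V)$ lies in a small disk $D_r(x)$ and $\pi^{-1}(V')$ in $D_r(y)$. This is possible if $\Dc(x)$ is small; in general one works with a point of $P_\infty$ in $\mathrm{Int}(Q)$, using the claim in the proof of Lemma~\ref{connected_fiber_1} together with the Trapping Lemma~\ref{lem:trapping}. Apply the Trapping Lemma to three components $P_0,P_1,P_2$ with $P_2$ between the other two. Each component of $P_2$ outside $D_r(x_0)\cup D_r(x_1)$ is then a component of $K$ outside these disks. This yields infinitely many disjoint continua of $K$, each meeting both $\partial D_r(x)$ and $\partial D_r(y)$, which lie outside the connected sets $\pi^{-1}(V)\cup\pi^{-1}(V')$ except near the disks. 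Projecting, their images are infinitely many distinct pieces of $\Dc\setminus(V\cup V')$ with diameter bounded below, accumulating on $\pi(P_\infty)$. This contradicts local connectedness, or the null-sequence property, of $\Dc$ at a point of $\pi(P_\infty)$.

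I expect this last step, ruling out that $\pi$ glues the $P_n$ together near $P_\infty$, to be the delicate part. The key input is that, in a neighbourhood where the images are not connected together, the only way $\pi(P_\infty)$ can avoid being a limit of separated continua is if elements of $\Dc$ meet infinitely many $P_n$. Those elements would then contain the limit points, forcing $\Dc(x)=\Dc(y)$.
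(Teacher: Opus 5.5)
Your reduction is correct: $\Dc$ induces a closed equivalence relation, so it suffices to show $\Dc(x)=\Dc(y)$ for $(x,y)\in R_K$. Your intended mechanism is also the paper's: use the Trapping Lemma~\ref{lem:trapping} to get separated continua, then push them into the quotient. But the decisive step is never carried out, and the steps you propose for it fail.

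\emph{(i) No connected neighbourhoods.} A Peano compactum need not be locally connected; $\{0\}\cup\{1/n:n\in\bbN\}$ is one. Since $\pi(x)$ may be a limit of small components of $\Dc$, your connected open $V$ need not exist. Connectedness of $V$ is not needed anyway.

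\emph{(ii) Wrong inclusion.} You require $\pi^{-1}(V)\subset D_r(x)$. This is impossible in general, because $\pi^{-1}(V)\supset\Dc(x)$ and $\Dc(x)$ can be large. It is also the wrong direction. If the saturated sets lie inside the disks, then $K\setminus\pi^{-1}(V\cup V')\supset K\setminus(D_r(x)\cup D_r(y))$. The Trapping Lemma only gives distinct components of the smaller set, and these may merge in the larger one. So you learn nothing about components of $\Dc\setminus(V\cup V')$.

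\emph{(iii) The closing heuristic.} It does not fill this hole. An element of $\Dc$ meeting infinitely many $P_n$ contains some point of $P_\infty$, not necessarily $x$ or $y$. Moreover, the contradiction has to be found at a point between $\pi(x)$ and $\pi(y)$, not near $\pi(x)$. Near $\pi(x)$, gluing of all the $P_n$ by $\Dc(x)$ is perfectly possible.

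The missing idea is the reverse inclusion, with disks inside saturated sets. Take any open sets $B_1\ni\pi(x)$ and $B_2\ni\pi(y)$ in $\Dc$ with $\overline{B_1}\cap\overline{B_2}=\emptyset$; no connectedness is needed. Then take disks $D_1\ni x$, $D_2\ni y$ in a planar neighbourhood of $Q$ with $D_i\subset\pi^{-1}(B_i)$.
\begin{itemize}
\item Pick $y_n\in P_n$ with $\pi(y_n)$ equidistant from $\overline{B_1}$ and $\overline{B_2}$. Then $y_n\notin D_1\cup D_2$, and after passing to a subsequence $y_n\to y_\infty$ with $\pi(y_\infty)$ in the interior of $\Dc\setminus(B_1\cup B_2)$.
\item Apply the Trapping Lemma with $P_0$ the component of $K\cap Q$ containing $P_\infty$. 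For infinitely many $n$, the component of $P_n\setminus(D_1\cup D_2)$ containing $y_n$ is a component of $K\setminus(D_1\cup D_2)$.
\item Set $L_0:=\pi^{-1}(\Dc\setminus(B_1\cup B_2))$. Then $L_0\subset K\setminus(D_1\cup D_2)$, and $\pi|_{L_0}$ is closed and monotone. Hence the points $\pi(y_n)$ lie in pairwise distinct components of $\Dc\setminus(B_1\cup B_2)$.
\end{itemize}
You still need the case split, which you only gesture at with ``or''.
\begin{itemize}
\item If infinitely many $\pi(P_n)$ lie in distinct components of $\Dc$, those components have diameters bounded below, since $\pi(P_n)\to\pi(P_\infty)$ and $\pi(P_\infty)$ is nondegenerate. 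This violates the null-sequence condition.
\item Otherwise almost all $\pi(P_n)$ lie in one component $Y_0$. The component of $Y_0\setminus(B_1\cup B_2)$ containing $\pi(y_\infty)$ is then not a neighbourhood of $\pi(y_\infty)$ in $Y_0$, so $Y_0$ is not locally connected.
\end{itemize}
This is precisely the paper's argument.
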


\begin{proof}
It suffices to show that for any usc decomposition $\Dc$ of $K$ into subcontinua, if there exists $(x_1,x_2)\in R_K$ such that $\Dc(x_1)\cap \Dc(x_2)=\emptyset$, then the quotient space $\Dc$ is not a Peano compactum. Pick a marked quadrilateral $\Qc=(Q,I_1,I_2)\subset \Sc$ with respect to $(x_1,x_2)$, such that
\begin{itemize}
\item[(a)] $x_1$ and $x_2$ belong to the two marked sides $I_1$ and $I_2$ respectively;
\item[(b)] $K\cap Q$ has infinitely many components $Q_n$ $(n\ge1)$ each of which intersects both of the arcs $I_1$ and $I_2$;
\item[(c)] $(Q_n)_{n\ge1}$ converges in Hausdorff distance to a continuum $Q_\infty\supset\{x_1,x_2\}$.
\end{itemize}
Let $\pi: K\rightarrow\Dc$ be the projection onto the quotient space. Because $\pi(x_1)\neq \pi(x_2)$, the continuum $\pi(Q_\infty)$ is nondegenerate and, hence, the diameters of $\pi(Q_{n})$ are bounded away from zero. Thus one of the following two cases must occur. 

\smallskip
\noindent {\em Case 1:}  There is an infinite sequence $(Y_i)_{i\ge 1}$ of pairwise disjoint components of the quotient space of $\Dc$ and a strictly increasing sequence $(n_i)_{i\ge 1}$ of integers satisfying $\pi(Q_{n_i}) \subset Y_i$. Thus, the diameters of $Y_i$ are bounded away from zero, hence, the quotient space of $\Dc$ is not a Peano compactum and we are done.

\smallskip
\noindent{\em Case 2:} There exist $N\ge 1$ and a component $Y_0$ of the quotient space of $\Dc$ 
such that $\pi(Q_n) \subset Y_0$ holds for each $n\ge N$ (by taking a subsequence we may assume w.l.o.g.\ that $N=1$.). In this case we also have $\pi^{-1}(Y_0)\supset Q_\infty$. To prove that $\Dc$ is not a Peano compactum it suffices to prove that $Y_0$ is not locally connected. Pick two open balls $B_j$ centered at $\pi(x_j)$ (with $j=1,2$) such that $\overline{B_1}\cap \overline{B_2} =\emptyset$. See Figure \ref{fig:finest_proof}. \begin{figure}[ht]
         \centering
     \includegraphics[trim = 0 10 0 10, width=0.618\textwidth]{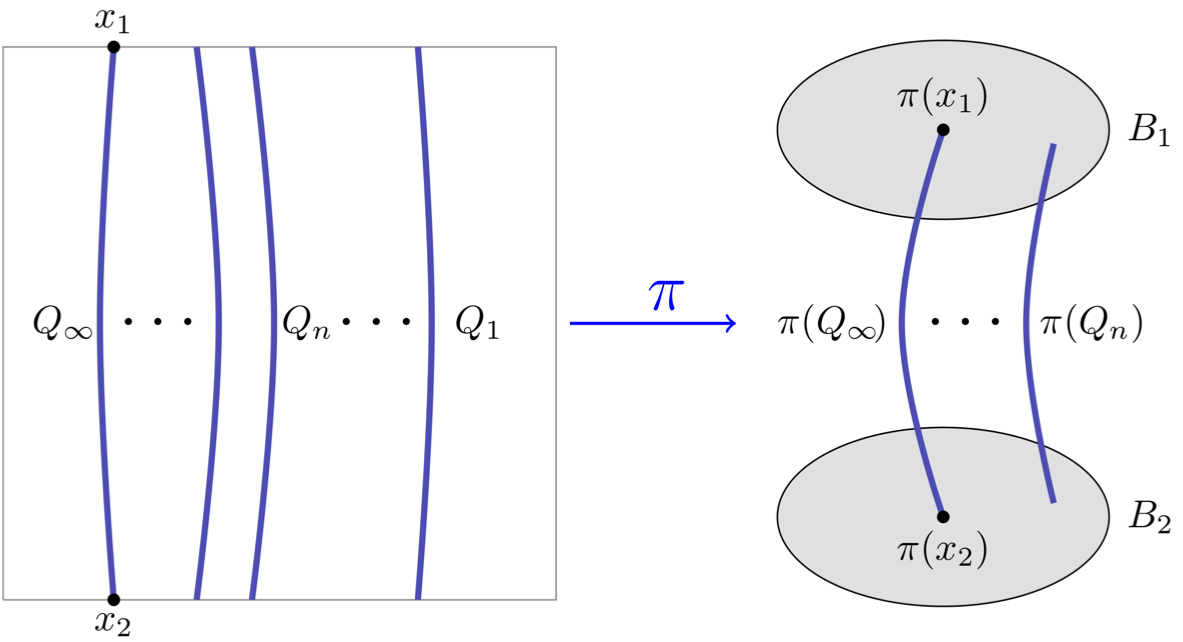}

     \caption{The components $Q_n$ and their images under $\pi$.
     }\label{fig:finest_proof}
\end{figure}
Set $L=\pi^{-1}(Y_0)$ and $L_0=L\setminus\big(\pi^{-1}(B_1)\cup \pi^{-1}(B_2)\big)$. By definition this implies that $\pi(L)=Y_0$ and $\pi(L_0)=Y_0\setminus(B_1\cup B_2)$. Note that $\pi(L_0)$ has nonempty interior in $Y_0$, whose relative topology is inherited from the quotient space $\Dc$. Moreover, its boundary in $Y_0$ is a subset of $\partial B_1 \cup \partial B_2$. Also not that $\pi(Q_\infty)$ is
connected and intersects ${B_1}$ as well as  ${B_2}$. By condition (c) we may assume that every $\pi(Q_n)\ (n\ge1)$ intersects both ${B_1}$ and  ${B_2}$. Since every $\pi(Q_n)$ $(n\ge1)$ is connected, for each $n\ge 1$ we can pick  $y_n\in Q_n$  such that the distance from $\pi(y_n)$ to $\overline{B_1}$ and that from $\pi(y_n)$ to $\overline{B_2}$ are equal. Going to an appropriate subsequence, we may assume that $y_n\rightarrow y_\infty$. Then
$\pi(y_\infty)$ belongs to $\pi(Q_\infty)\cap{\rm Int}\left(\pi(L_0)\right)$. To complete our proof, we show that the component of $\pi(L_0)$ containing $\pi(y_\infty)$ is not a neighborhood of $\pi(y_\infty)$, with respect to the relative topology of $Y_0$.  To do that, we pick an open neighborhood of $Q$, say $Q^*$, which may be regarded as a subset on the plane. Fix two open disks $D_i$ centered at $x_i$ $(i\in\{1,2\})$, such that $\overline{D_1}\cap\overline{D_2}=\emptyset$ and $D_i\subset Q^*\cap\pi^{-1}(B_i)$ for $i\in\{1,2\}$. Since $\lim\limits_{n\rightarrow\infty}Q_n=Q_\infty\supset\{x_1,x_2\}$, we may assume that every $Q_n$ intersects both $D_1$ and $D_2$. By condition (b) and the containment $\{y_n: n\ge1\}\subset L_0$ we may apply Lemma \ref{lem:trapping} to exhibit an infinite set $\bbN_0\subset\bbN$ such that for any $n\in\bbN_0$  the component of $Q_n\setminus(D_1\cup D_2)$  containing $y_n$ is also a component of $K\setminus(D_1\cup D_2)$.  Thus no two of  $\pi(y_n)$ with $n\in\bbN_0$ lie in the same component of $\pi(L_0)=Y_0\setminus(B_1\cup B_2)$. This proves the result, since $\pi(y_n)\rightarrow\pi(y_\infty)$. 
\end{proof}

\begin{rema}
Proposition \ref{pro:core_1} is also the consequence of Propositions \ref{closure_R} and \ref{prop:R_K_tilde}. Because the proof of Proposition \ref{pro:core_1} is more direct, we kept it for the sake of self-containment.
\end{rema}

\section{How atoms are mapped under branched coverings}\label{PD_basics}
This section is devoted to the proof of Theorem \ref{invariance}. We first introduce a  new relation.

\begin{Deff}[The relation $\widetilde{R_K}$]\label{def:RTilde}
The relation  $\widetilde{R_K}$ on an arbitrary compactum $K$ (not necessarily embedded in a surface) consists of all points $(x,x)\in K\times K$ and all points $(x,y)$ with $x\ne y$ such that both $x$ and $y$ possess a decreasing sequence of neighborhoods $(U_{x,n})_{n\ge 1}$ and $(U_{y,n})_{n\ge 1}$  satisfying the following two conditions.
\begin{itemize}
\item[(a)]  $\bigcap_{n\ge 1}U_{x,n}=\{x\}$ and $\bigcap_{n\ge 1}U_{y,n}=\{y\}$.
\item[(b)] For each $n\ge1$ the set $K\setminus\left(U_{x,n}\cup U_{y,n}\right)$ has infinitely many components $Q_{k,n}$ ($k\ge 1$) intersecting both of the boundaries $\partial U_{x,n}$ and $\partial U_{y,n}$.
\end{itemize}
\end{Deff}

By definition, $\widetilde{R_K}$ is reflexive and symmetric. Moreover, we can easily check that  $\widetilde{R_K}$ is a  closed relation.
The relation $\widetilde{R_K}$ has a close connection with the Peano decompositions of  $K$. Indeed, we have the following result.

\begin{prop}\label{closure_R}
If $\Dc$ is a Peano decomposition of a compactum $K$ and $x\in K$, then $\widetilde{R_K}(x)\subset \Dc(x)$.
\end{prop}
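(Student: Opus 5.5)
We argue by contradiction. Suppose $(x,y)\in\widetilde{R_K}$ with $x\ne y$ but $\Dc(x)\neq\Dc(y)$; since $\Dc$ is a decomposition, $\Dc(x)\cap\Dc(y)=\emptyset$. Let $\pi:K\to\Dc$ be the natural projection, a closed monotone surjection onto the Peano compactum $\Dc$. Choose disjoint closed neighbourhoods $B_x\ni\pi(x)$ and $B_y\ni\pi(y)$ in the quotient. By upper semicontinuity, $\pi^{-1}(\mathrm{Int}\,B_x)$ and $\pi^{-1}(\mathrm{Int}\,B_y)$ are open saturated neighbourhoods of $\Dc(x)$ and $\Dc(y)$. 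By condition (a) of Definition~\ref{def:RTilde} and compactness, for large $n$ we have $\overline{U_{x,n}}\subset\pi^{-1}(\mathrm{Int}\,B_x)$ and $\overline{U_{y,n}}\subset\pi^{-1}(\mathrm{Int}\,B_y)$. Fix such an $n$. Condition (b) then gives infinitely many components $Q_k=Q_{k,n}$ of $K\setminus(U_{x,n}\cup U_{y,n})$, each meeting $\partial U_{x,n}$ and $\partial U_{y,n}$.

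We now push this to the quotient. Each $\pi(Q_k)$ is a continuum in $\Dc$ meeting both $B_x$ and $B_y$, so it has diameter at least $\mathrm{dist}(B_x,B_y)>0$. Pass to a subsequence so that $Q_k\to Q_\infty$ and $\pi(Q_k)\to Z$ in the Hausdorff metric. If infinitely many $\pi(Q_k)$ lie in distinct components of $\Dc$, this contradicts the null-sequence property of an $F$-compactum. So, as in Case~2 of the proof of Proposition~\ref{pro:core_1}, we may assume all $\pi(Q_k)$ lie in a single component $Y_0$, which is a Peano continuum.

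Next we show that local connectedness of $Y_0$ is violated. Set $M=\Dc\setminus(\mathrm{Int}\,B_x\cup\mathrm{Int}\,B_y)$, a closed set, and pick points $y_k\in Q_k$ with $\pi(y_k)$ equidistant from $B_x$ and $B_y$. After passing to a subsequence, $\pi(y_k)\to w$, where $w\in Y_0$ lies in the interior of $M$ relative to $Y_0$. We claim that the points $\pi(y_k)$ lie in pairwise distinct components of $M\cap Y_0$. Granting the claim, the component of $M\cap Y_0$ containing $w$ is not a neighbourhood of $w$ in $Y_0$, contradicting local connectedness of $Y_0$; more precisely, a Peano continuum is uniformly locally connected, so small neighbourhoods of $w$ would lie in a single component of $M\cap Y_0$.

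The main obstacle is proving this claim. For saturated sets, $\pi^{-1}(M)\subset K\setminus(U_{x,n}\cup U_{y,n})$, but $\pi^{-1}(M)$ is generally smaller than that set and is not simply a union of the $Q_k$. We proceed as follows. Suppose $\pi(y_k)$ and $\pi(y_l)$, with $k\ne l$, lie in a common connected subset $C$ of $M$. Since $\pi$ is closed and monotone, preimages of connected sets are connected, so $\pi^{-1}(C)$ is a connected subset of $K\setminus(U_{x,n}\cup U_{y,n})$. Because $y_k\in\pi^{-1}(C)$ and $y_l\in\pi^{-1}(C)$, the set $\pi^{-1}(C)$ would join $y_k\in Q_k$ to $y_l\in Q_l$ inside $K\setminus(U_{x,n}\cup U_{y,n})$. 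This is impossible, since $Q_k$ and $Q_l$ are distinct components of that set. Hence the $\pi(y_k)$ lie in distinct components of $M$, and therefore of $M\cap Y_0$. Note that no surface structure is used here, since monotonicity replaces the trapping lemma. The point requiring care is choosing $n$ so that the $U$'s sit inside the saturated preimages; this is what guarantees $\pi^{-1}(M)\cap(U_{x,n}\cup U_{y,n})=\emptyset$.
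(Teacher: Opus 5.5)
Your proof is correct and follows essentially the same route as the paper's proof. You assume $\pi(x)\neq\pi(y)$, use the $F$-compactum property to force the images $\pi(Q_k)$ into a single Peano component $Y_0$, and contradict local connectedness of $Y_0$ at a limit of equidistant points $\pi(y_k)$.

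The one real difference is the separation step. You combine two facts: preimages of connected sets under the closed monotone map $\pi$ are connected, and $\pi^{-1}(M)\subset K\setminus(U_{x,n}\cup U_{y,n})$. The paper instead appeals to the Cut Wire Theorem and the planar Trapping Lemma. Your version therefore holds verbatim for an arbitrary compactum, which is what the statement requires.
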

\begin{proof}
Let $\pi: K\rightarrow\Dc$ be the monotone projection of $K$ onto the quotient space of $\Dc$. We will provide a proof by contradiction.  Suppose on the contrary that there exists  $(x,y)\in\widetilde{R_K}$ with $\pi(x)\ne\pi(y)$. In the quotient space $\Dc$ pick two open sets $V_x$ and $V_y$ with $\overline{V_x}\cap \overline{V_y}=\emptyset$ and $\pi(x)\in V_x$ and $\pi(y)\in V_y$.
As $(x,y)\in\widetilde{R_K}$, we can find sequences $(U_{x,n})_{n\ge 1}$ and $(U_{y,n})_{n\ge 1}$ satisfying conditions (a) and (b) of Definition~\ref{def:RTilde}. Denote the infinitely many components intersecting both of the boundaries $\partial U_{x,n}$ and $\partial U_{y,n}$ by $Q_{k,n}$ ($n,k\ge 1$). Fix $n\ge1$ such that $U_{x,n}\subset\pi^{-1}(V_x)$ and $U_{y,n}\subset\pi^{-1}(V_y)$  hold.
Using Lemma \ref{lem:CWT}, we see that every $Q_{k,n}$ $(k\ge1)$ contains a subcontinuum $P_k$ which is a component of $Q_{k,n}\setminus\left(\pi^{-1}(V_x)\cup\pi^{-1}(V_y)\right)$ and intersects $\partial \pi^{-1}(V_x)$ and $\partial \pi^{-1}(V_y)$. By Lemma~\ref{lem:trapping}, each of these $P_k$ is also a component of
\[
K\setminus\Big(\pi^{-1}(V_x)\cup\pi^{-1}(V_y)\Big).
\]
As $\pi: K\rightarrow\Dc$ is a monotone map, we see that every $\pi(P_k)$ is necessarily a component of $\Dc\setminus(V_x\cup V_y)$, which intersects $\partial V_x$ and $\partial V_y$ both. Since the quotient space $\Dc$ is a Peano compactum, it has a component $M$ that contains all but finitely many of the above continua $\pi(P_k)$.
By going to an appropriate subsequence, we may assume that $\pi(P_k)$ converge under the Hausdorff distance to a continuum $P_\infty^*$.
Pick $\xi\in P_\infty^*$ with ${\rm dist}(\xi,\partial V_x)={\rm dist}(\xi,\partial V_y)$. Then $\Dc\setminus(V_x\cup V_y)$ is a neighborhood of $\xi$ and its component containing $\xi$ is no longer a neighborhood of $\xi$. Thus $M$ is not locally connected at $\xi$.
This is absurd, since the quotient space $\Dc$ is assumed to be a Peano compactum thus all of its components, including $M$, must be locally connected.
\end{proof}

The following result is also useful.
\begin{prop}\label{prop:R_K_tilde}
If $K$ lies on a closed surface $\Sc$, then $\widetilde{R_K}$ contains $R_K$.
\end{prop}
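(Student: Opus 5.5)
Fix distinct $x,y$ with $(x,y)\in R_K$ and a marked quadrilateral $\Qc=(Q,I_1,I_2)$ as in Definition~\ref{R_K}: $x\in I_1$, $y\in I_2$, $K\cap\partial Q\subset I_1\cup I_2$, and components $P_n$ of $K\cap Q$, each meeting $I_1$ and $I_2$, with $P_n\to P_\infty\ni x,y$. As in the proof of Lemma~\ref{lem:Rk=RkQ}, we work inside a planar open neighbourhood $Q^*$ of $Q$, identified with a subset of $\bbC$. Choose radii $r_n\downarrow0$ with $r_1<\frac12|x-y|$, small enough that the closed disks $\overline{D_{r_n}(x)}$ and $\overline{D_{r_n}(y)}$ lie in $Q^*$ and are disjoint. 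Set $U_{x,n}=K\cap D_{r_n}(x)$ and $U_{y,n}=K\cap D_{r_n}(y)$. These are decreasing neighbourhoods of $x$ and $y$ in $K$ with intersections $\{x\}$ and $\{y\}$, so condition (a) of Definition~\ref{def:RTilde} holds. Their boundaries in $K$ are contained in $K\cap\partial D_{r_n}(x)$ and $K\cap\partial D_{r_n}(y)$, respectively.

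To check condition (b) for a fixed $n$, write $D_x=D_{r_n}(x)$ and $D_y=D_{r_n}(y)$. Since $P_k\to P_\infty\ni x,y$, all but finitely many $P_k$ meet both $D_x$ and $D_y$. Each such $P_k$ is connected and not contained in $D_x\cup D_y$, because it has points near $x$ and near $y$ and the closed disks are disjoint. By the boundary bumping argument used in Lemma~\ref{connected_fiber_1} (the Cut Wire Theorem, Lemma~\ref{lem:CWT}, applied in the compactum $P_k\setminus(D_x\cup D_y)$), some component $R_k$ of $P_k\setminus(D_x\cup D_y)$ meets both $\partial D_x$ and $\partial D_y$.

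The key step is to show that $R_k$ is a whole component of $K\setminus(U_{x,n}\cup U_{y,n})=K\setminus(D_x\cup D_y)$, not merely of $P_k\setminus(D_x\cup D_y)$. This is exactly what the Trapping Lemma~\ref{lem:trapping} provides. Pass to a subsequence and, after a homeomorphism, take $Q=[0,1]^2$ with $I_1,I_2$ as the top and bottom sides. Since $K\cap\partial Q\subset I_1\cup I_2$, the set $K\cap Q$ avoids the vertical sides, so Lemma~\ref{lem:trapping} applies to $K\cap Q\subset(0,1)\times[0,1]$. The components $P_k$ are vertical crossings. By Lemma~\ref{lem:arcalphaSquare} they are linearly ordered left to right, and at most finitely many can lie on one side of $P_\infty$'s limit without accumulating. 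Passing to a subsequence, we may assume they are monotone in this order. For three indices $k'<k<k''$ we then have:
\begin{itemize}
\item[(1)] $P_{k'}$ lies in the component $V_0$ of $Q\setminus P_k$ and $P_{k''}$ lies in $V_1$, which is hypothesis (2) of Lemma~\ref{lem:trapping} with $P_2=P_k$;
\item[(2)] hypothesis (3) of Lemma~\ref{lem:trapping} holds for $n$ large, because $P_{k'}$ and $P_{k''}$ both approach $x$ and $y$.
\end{itemize}
The lemma is stated with disks centred on points of $P_0$ on the boundary lines. I would either re-centre the disks slightly, since $x,y$ lie on $I_1,I_2$, or simply reproduce its proof. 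That proof uses separating arcs $\alpha_1,\alpha_2$ from Lemma~\ref{lem:arcalphaSquare} around $P_k$, together with the segments joining points of $P_{k'}$ and $P_{k''}$ inside $D_x$ and $D_y$. Here one must check that $K$ outside $Q$ cannot connect to $R_k$: the trapping region's boundary meets $K$ only inside $D_x\cup D_y$, and $R_k$ lies in the interior of $Q$ away from $\partial Q\setminus(I_1\cup I_2)$.

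This yields infinitely many distinct components $R_k$ of $K\setminus(U_{x,n}\cup U_{y,n})$ meeting both boundaries. They are distinct because they lie in distinct $P_k$. Hence $(x,y)\in\widetilde{R_K}$. The main obstacle I expect is the trapping step: the points $x,y$ lie on $\partial Q$, so part of each disk sticks outside $Q$, and one must verify that components of $K\setminus(D_x\cup D_y)$ lying outside $Q$ cannot merge with $R_k$. Enclosing $R_k$ in a Jordan region whose boundary meets $K$ only inside $D_x\cup D_y$ should handle this; that region is built from $\alpha_1$, $\alpha_2$, the short segments, and the arcs of $\partial D_x,\partial D_y$.
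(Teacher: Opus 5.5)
Your overall plan matches the paper's proof: disk neighbourhoods of $x$ and $y$, boundary bumping inside the crossing components, and the trapping construction to upgrade a piece of $P_k$ to a component of $K\setminus(D_x\cup D_y)$. (The paper takes $P_0$ to be the component of $K\cap Q$ containing the limit, so the disks are centred on it as Lemma~\ref{lem:trapping} requires. Your two flanking sets $P_{k'},P_{k''}$, joined by segments inside the disks, serve the same purpose.)

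The gap is in the order of choices at the key step. You fix an \emph{arbitrary} component $R_k$ of $P_k\setminus(D_x\cup D_y)$ meeting both circles, and only then claim it is trapped. The segments give cross-cuts $c_x\subset D_x$ and $c_y\subset D_y$ of the strip $S$ between $\alpha_1$ and $\alpha_2$. Only the part $M$ of $S$ lying between $c_x$ and $c_y$ has boundary meeting $K$ inside $D_x\cup D_y$. The two end parts of $S$ are bounded partly by arcs of $I_1$ and $I_2$, which may leave the disks, and $K$ can continue outside $Q$ from those arcs. So the real danger is $I_1\setminus D_x$ and $I_2\setminus D_y$, not $\partial Q\setminus(I_1\cup I_2)$.

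This can actually happen. $P_k$ may enter $I_1$ far from $x$, run into $D_x$ without meeting $D_y$, and carry a side branch reaching into $D_y$. The outer neighbour can simply wrap around that branch, so both flanking sets still meet both disks. The component of $P_k\setminus(D_x\cup D_y)$ through the entry point then meets both circles. It is nevertheless not a component of $K\setminus(D_x\cup D_y)$ once $K$ has an arc outside $Q$ attached at that entry point. Your proposed Jordan region (built from $\alpha_1,\alpha_2$, the segments and arcs of the circles) does not contain this $R_k$.

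The repair is to build the trap first and choose the piece inside it.
\begin{itemize}
\item Since $c_x$ and $c_y$ each separate $S\cap I_1$ from $S\cap I_2$ in $S$, and $P_k$ meets both arcs, Lemma~\ref{lem:CWT} gives a component $C$ of $P_k\cap\overline M$ meeting $c_x$ and $c_y$.
\item A second application gives a component $R$ of $C\setminus(D_x\cup D_y)$ meeting $\overline{C\cap D_x}\setminus D_x$ and $\overline{C\cap D_y}\setminus D_y$. These sets lie in the boundaries of $U_{x,n},U_{y,n}$ relative to $K$, which is what Definition~\ref{def:RTilde} asks for; meeting $\partial D_x,\partial D_y$ alone is not enough.
\item Because $\partial M\setminus(D_x\cup D_y)\subset\alpha_1\cup\alpha_2$ misses $K$, the component of $K\setminus(D_x\cup D_y)$ containing $R$ stays in $M\cap K$, hence in $P_k$.
\end{itemize}
These components are therefore distinct for distinct $k$. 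This is what the paper means by choosing $P_n\subset U_n$ with $U_n$ a Jordan domain in ${\rm Int}(Q)$.
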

\begin{proof}Fix a metric $\rho$ on $\Sc$ that is compatible with the topology of $\Sc$.
Given $(x_1,x_2)\in R_K$, we can fix  a marked quadrilateral $\Qc=(Q,I_1,I_2)\subset \Sc$ with respect to $(x_1,x_2)$, such that
\begin{itemize}
\item[(a)] $x_1\in I_1$, $x_2\in I_2$, while $\partial Q\setminus(I_1\cup I_2)$ is disjoint from $K$;
\item[(b)] $K\cap Q$ has infinitely many components $Q_n$ $(n\ge1)$ each of which intersects $I_1$ and $I_2$;
\item[(c)] $Q_n$ $(n\ge1)$ converge under Hausdorff distance to a continuum $Q_\infty\supset\{x_1,x_2\}$.
\end{itemize}To show that $(x_1,x_2)\in\widetilde{R_K}$, we may consider $Q$ to be $[0,1]^2$ with $I_1=[0,1]\times\{1\}$ and $I_2=[0,1]\times\{0\}$.  Let $Q_0$ be the component of $K\cap Q$ that contains $Q_\infty$. W.l.o.g., we assume that all $Q_n$ $(n\ge1)$ lie in the same component of $Q\setminus Q_0$.
For any $k\ge1$, let $D_{r_k}(x_i)$ $(i=1,2)$ be the disk neighborhoods of $x_i$ with radius $r_k=\frac{1}{k+2}\rho(x_1,x_2)$. By Lemma \ref{lem:trapping}, we can find a Jordan domain $U_n\subset Q^\circ$ and a continuum $P_n\subset Q_n$, for infinitely many $n\ge1$, such that the following conditions are all satisfied. First, $P_n\subset U_n$. Second, $P_n$ is a component of $(K\cap Q)\cup(\Sc\setminus {\rm Int}(Q))$. Third, $P_n$ intersects $\partial D_{r_k}(x_1)$ and  $\partial D_{r_k}(x_2)$.
By the definition of $\widetilde{R_K}$ and because $k\ge1$ was arbitrary, this proves the result.
\end{proof}

Notice that when $K$ is actually planar, so that it may be embedded into $\widehat{\bbC}$, we can infer from \cite[Theorem 1.3]{LYY-2020} that $\widetilde{R_K}=\overline{R_K}$. In such a case, we know that every fiber $\widetilde{R_K}(x)$ is a continuum \cite[Theorem 1.4]{LYY-2020}. When $K$ is nonplanar and lies on a closed surface, the following problem remains open.

\begin{ques}
Is $\widetilde{R_K}(x)$ connected for each $x\in K$? Is it true that $\widetilde{R_K}=\overline{R_K}$?
\end{ques}

\begin{rema}
Fix a general compactum $K$.
By Proposition \ref{closure_R}, if the core decomposition $\Dc_K^{PC}$ exists then every fiber $\widetilde{R_K}(x)$ with $x\in K$ lies in a single element of $\Dc_K^{PC}$. In other words, $\Dc_K^{PC}$ splits no fiber of $\widetilde{R_K}$.  However, it is not clear whether there exists an usc decomposition into subcontinua, whose quotient space is not a Peano compactum, that refines $\Dc_K^{PC}$ and splits none of the fibers $\widetilde{R_K}(x)$.
Note that if further $K$ belongs to a closed surface then $\overline{R_K}\subset\widetilde{R_K}$ by Proposition~\ref{prop:R_K_tilde} and, by Proposition~\ref{closure_R}, we can easily infer Proposition~\ref{pro:core_1}. Therefore, by Proposition~\ref{PC_quotient} we further see that $\Dc_K^{PC}$ is the  finest usc decomposition  that splits no fiber of $\widetilde{R_K}$. In other words, the smallest closed equivalence containing $\widetilde{R_K}$ coincides with $\sim_K$. Notice that in such a situation the classes of $\sim_K$ are all connected thus $\Dc_K^{PC}$ is also the finest usc decomposition of  $K$ into subcontinua that splits no fiber of $\widetilde{R_K}$.
\end{rema}


In view of Propositions \ref{closure_R} and \ref{prop:R_K_tilde}, the first part of Theorem~\ref{invariance} is implied by the following result.

\begin{lemm}\label{into}
Let $f:\Sc_1\rightarrow\Sc_2$ be a branched covering map of a closed surface $\Sc_1$ onto a closed surface $\Sc_2$, $K\subset\Sc_2$ a compactum and $L=f^{-1}(K)$.  Under the above assumptions, every fiber of $\widetilde{R_L}$ is sent into a fiber of $\widetilde{R_K}$. In particular, if $K$ is a Peano comapctum so is $L$.
\end{lemm}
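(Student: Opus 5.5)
The plan is to prove directly that for $(u,v)\in\widetilde{R_L}$ with $u\neq v$, either $f(u)=f(v)$ or $(f(u),f(v))\in\widetilde{R_K}$. Since $\widetilde{R_K}$ is reflexive, this puts $f(\widetilde{R_L}(u))$ inside the fiber $\widetilde{R_K}(f(u))$.

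Fix such $u,v$ with $x=f(u)\neq y=f(v)$. Branched coverings between closed surfaces are finite-to-one, open and closed, and locally look like $z\mapsto z^d$ in suitable charts. So for each $x$ we choose a decreasing sequence of closed disk neighborhoods $D_{x,n}$ shrinking to $x$ that are small enough that $f^{-1}(D_{x,n})$ is a disjoint union of closed disks, one around each preimage of $x$. Each of these disks maps onto $D_{x,n}$ as a branched cover of the form $z^d$, and we may take the branching to occur only at the preimage points themselves. We do the same for $y$. Set $U_{x,n}=D_{x,n}\cap K$ and $U_{y,n}=D_{y,n}\cap K$ as neighborhoods in $K$; condition (a) of Definition \ref{def:RTilde} then holds automatically. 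It remains to verify condition (b): we need infinitely many components of $K\setminus(U_{x,n}\cup U_{y,n})$ that meet both $\partial D_{x,n}$ and $\partial D_{y,n}$.

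\textbf{Transferring from $L$.} Since $(u,v)\in\widetilde{R_L}$, Proposition \ref{closure_R} shows that the pair is not separated by any Peano decomposition of $L$. We would rather use the defining sequences directly, though. As in the proof of Proposition \ref{closure_R}, we apply Lemma \ref{lem:CWT} and shrink to the disks $E_{u,n},E_{v,n}$ of $f^{-1}(D_{x,n}),f^{-1}(D_{y,n})$ that contain $u$ and $v$. This yields infinitely many distinct continua $P_k\subset L\setminus(E_{u,n}\cup E_{v,n})$ meeting $\partial E_{u,n}$ and $\partial E_{v,n}$. We may shrink them further so that each is a component of $L\setminus f^{-1}(D_{x,n}\cup D_{y,n})$ meeting the boundaries of these preimage disks, and here we use that $f^{-1}(D)$ consists of finitely many disks. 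Each image $f(P_k)$ is a connected subset of $K\setminus(D_{x,n}\cup D_{y,n})$ that meets both $\partial D_{x,n}$ and $\partial D_{y,n}$, so it lies in some component $C_k$ of $K\setminus(D_{x,n}\cup D_{y,n})$ with the required boundary intersections.

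\textbf{Main obstacle: the $C_k$ must be infinitely many distinct components.} The key fact is that $f$ restricted to $\Sc_1\setminus f^{-1}(\mathrm{Int}\,D_{x,n}\cup\mathrm{Int}\,D_{y,n})$ is a branched covering of degree $d$ onto $\Sc_2\setminus(\mathrm{Int}\,D_{x,n}\cup\mathrm{Int}\,D_{y,n})$, and it is closed, open and finite-to-one. For a component $C$ of $K\setminus(\cdots)$, the preimage $f^{-1}(C)$ is a compact subset of $L\setminus f^{-1}(\cdots)$ with $f^{-1}(C)=f^{-1}(f(\text{itself}))$. A standard fact for open closed maps is that every component of $f^{-1}(C)$ maps onto $C$. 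Using the path-lifting and finiteness of fibers of branched coverings, I would argue that $f^{-1}(C)$ has at most $d$ components: if it had $d+1$, pick a point of $C$ away from the finitely many branch values; its $d$ preimages would lie in at most $d$ of the components, yet each component surjects onto $C$, a contradiction. Moreover each component of $f^{-1}(C)$ is a union of components of $L\setminus f^{-1}(\cdots)$. Showing it is actually a single such component, or at least that only boundedly many $P_k$ map into one $C$, is the delicate point. I would handle it via the Cut Wire Theorem applied to the closed set $f^{-1}(C)$: its components are clopen in it, and $f^{-1}(C)$ is itself a union of components of $L\setminus f^{-1}(\cdots)$ since $f^{-1}(C)$ is clopen there, because $C$ is a component of a compactum and hence an intersection of clopen sets pulled back by $f$. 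Hence the components of $f^{-1}(C)$ are exactly components of $L\setminus f^{-1}(\cdots)$, so at most $d$ of the $P_k$ map into any given $C$. Infinitely many $P_k$ therefore give infinitely many distinct $C_k$, which establishes (b) and hence $(x,y)\in\widetilde{R_K}$.

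For the last assertion, suppose $K$ is a Peano compactum. By Proposition \ref{closure_R} applied to the trivial decomposition, $\widetilde{R_K}$ is trivial. The main part then shows that $\widetilde{R_L}(u)\subset f^{-1}(f(u))$, which is finite. Since $\widetilde{R_L}$ contains $R_L$ by Proposition \ref{prop:R_K_tilde}, and $R_L(u)$ is contained in the continuum $P_\infty$ of Lemma \ref{connected_fiber_1} containing $u$ and lies in finitely many points, $R_L$ is trivial. To see this, note that nontriviality would force the whole continuum $P_\infty$ into $\overline{R_L}(u)\subset\widetilde{R_L}(u)$ by the claim in that proof, and $\widetilde{R_L}$ is closed. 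Corollary \ref{R_K_and_PC} then shows that $L$ is a Peano compactum.
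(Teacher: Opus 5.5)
Your argument is correct and is essentially the paper's proof: the Cut Wire Theorem produces distinct components $P_k$ of $L\setminus f^{-1}(D_1\cup D_2)$ meeting both boundaries, and openness plus finite fibers (every component of $f^{-1}(C)$ maps onto $C$, so there are at most $d$ of them) show each $P_k$ is a component of $f^{-1}(C_k)$, so infinitely many distinct $C_k$ arise. This counting step, like your derivation of the Peano clause, is left implicit in the paper. Two small repairs: $f^{-1}(C)$ need not be clopen in $L\setminus f^{-1}(\cdots)$, but you only need that $P_k\subset f^{-1}(C)$ is a component of $f^{-1}(C)$, which is immediate because $P_k$ is already a component of the larger set; and you should take $U_{x,n}=K\cap\mathrm{Int}\,D_{x,n}$, since no component of $K\setminus D_{x,n}$ can meet $\partial D_{x,n}$.
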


\begin{proof}
Given $(x_1,x_2)\in\widetilde{R_L}$ with  $f(x_1)\ne f(x_2)$, we will show that $(f(x_1),f(x_2))\in\widetilde{R_K}$.
Choose two disk neighborhoods  $D_i$ in $K$, centered at $f(x_i)$ and with small radius $r>0$, such that $\overline{D_1}\cap\overline{D_2}=\emptyset$. Fix open sets $U_i$ with $x_i \in U_i$ ($i\in\{1,2\}$) with $f(U_i)\subset D_i$ such that  $L\setminus\left(U_1\cup U_2\right)$ has infinitely many components, say $\{Q_k: k\ge1\}$, each of which intersects both of the boundaries $\partial U_1$ and $\partial U_2$. By Lemma \ref{lem:CWT} every $Q_k$ $(k\ge1)$ contains a subcontinuum $P_k$ which is a component of $Q_k\setminus\left(f^{-1}(D_1)\cup f^{-1}(D_2)\right)$ and intersects both $\partial f^{-1}(D_1)$ and $\partial f^{-1}(D_2)$. Each of these $P_k$ is also a component of $L\setminus\left(f^{-1}(D_1)\cup f^{-1}(D_2)\right)$.
Let $M_k$ be the component of $K\setminus(D_1\cup D_2)$ that contains $f(P_k)$. As $f$ is finite-to-one, the preimage $f^{-1}(M_k)$ has finitely many components. Each of them is sent by the open map $f$ onto $M_k$, since we can apply \cite[p.~284, Theorem 13.14]{Nadler92}.
It follows that $P_k$ is a component of $f^{-1}(M_k)$ and satisfies $M_k=f(P_k)$. That is to say, every $f(P_k)$ is a component of $K\setminus(D_1\cup D_2)$ and intersects both $\partial D_1$ and $\partial D_2$. Because $r>0$ was arbitrary, it is immediate that $(f(x_1),f(x_2))\in\widetilde{R_K}$.
\end{proof}

\begin{proof}[Proof of Theorem~\ref{invariance}~(1)]
Let $L=f^{-1}(K)$. Then $\mathcal{D}_L^*=\left\{f^{-1}(\delta): \delta\in\mathcal{D}_K^{PC}\right\}$ is a usc decomposition of $L$. By Lemma 5.6, no fiber of $\widetilde{R_L}$ is split by $\mathcal{D}_L^*$, so that each fibers of
$\widetilde{R_L}$ is contained in a single element of $\mathcal{D}_L^*$. 
By definition, $\mathcal{D}_L$ is the finest usc decomposition which splits none of the fibers of $R_L$.
Theorem~\ref{core} implies that the core decomposition $\mathcal{D}_L^{PC}$ exists and coincides with $\mathcal{D}_L$. Thus also $\mathcal{D}_L^{PC}$ splits no fibers of $R_L$. Since $\mathcal{D}_L^{PC}$ is a Peano decomposition, Propositions 5.2 implies that it splits none of the fibers of $\widetilde{R_L}$. 

Let $\widetilde{\mathcal{F}}$ be the family of all usc decompositions that split none of the fibers of $\widetilde{R_L}$ and let ${\mathcal{F}}$ be the family of all usc decompositions that split none of the fibers of $R_L$. Then by Proposition~5.3 we get $\widetilde{\mathcal{F}}\subset\mathcal{F}$. Because $\mathcal{D}_L^{PC}$ is the finest member of $\mathcal{F}$, {\em a fortiori}, it has to be the finest member of $\widetilde{\mathcal{F}}$. In other words, the core decomposition $\mathcal{D}_L^{PC}$ is the finest usc decomposition that splits none of the fibers of $\widetilde{R_L}$. It follows that $\mathcal{D}_L^{PC}$ refines $\mathcal{D}_L^*$, which then implies that every element of $\mathcal{D}_L^{PC}$ is sent by $f$ into a single element of $\mathcal{D}_K^{PC}$. This proves  Theorem 2.5~(1).
\end{proof}

To prove Theorem~\ref{invariance}~(2) we now consider covering maps between closed surfaces and obtain the following lemma.

\begin{lemm}\label{lem:fiber_onto}
Let $f:\Sc_1\rightarrow\Sc_2$ be a covering map of a closed surface $\Sc_1$ onto a closed surface $\Sc_2$, $K\subset\Sc_2$ a compactum and $L=f^{-1}(K)$.  If $x\in K$ and $u\in f^{-1}(x)$ then for any $y\in R_K(x)$ there exists $v\in R_L(u)$ with $f(v)=y$.
\end{lemm}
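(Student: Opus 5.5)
The plan is to lift the marked quadrilateral that witnesses $(x,y)\in R_K$ through the covering $f$. The case $y=x$ is trivial: take $v=u$. So assume $y\neq x$, and fix a marked quadrilateral $\Qc=(Q,I_1,I_2)$ with $x\in I_1$, $y\in I_2$, satisfying conditions (a), (b) and (c) of Definition~\ref{R_K}. The components $Q_n$ of $K\cap Q$ meet both $I_1$ and $I_2$, and converge to a continuum $Q_\infty\ni x,y$.

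Since $Q$ is homeomorphic to $[0,1]^2$, it is simply connected and locally path connected. Hence the inclusion $Q\hookrightarrow\Sc_2$ lifts through the covering $f$, by the lifting criterion. We choose the lift so that the point $x$ goes to $u$. This gives an embedding $g:Q\to\Sc_1$ with $f\circ g=\mathrm{id}_Q$ and $g(x)=u$. It is injective because $f\circ g$ is. Set $Q'=g(Q)$, $I_1'=g(I_1)$ and $I_2'=g(I_2)$; then $\Qc'=(Q',I_1',I_2')$ is a marked quadrilateral in $\Sc_1$. Moreover, $f|_{Q'}:Q'\to Q$ is a homeomorphism with inverse $g$.

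Next we transfer conditions (a), (b) and (c). We have $L\cap Q'=f^{-1}(K)\cap Q'=g(K\cap Q)$, because a point $p\in Q'$ lies in $L$ exactly when $f(p)\in K$, and $p=g(f(p))$. It follows that $L\cap\partial Q'=g(K\cap\partial Q)\subset I_1'\cup I_2'$, which is (a). The components of $L\cap Q'$ are precisely the sets $g(Q_n)$ together with the images of the other components of $K\cap Q$. Each $g(Q_n)$ meets both $I_1'$ and $I_2'$, which gives (b).

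For (c), $g$ is uniformly continuous on the compact set $Q$, so it induces a continuous map on the hyperspace of compact subsets of $Q$ with the Hausdorff metric. Therefore $g(Q_n)\to g(Q_\infty)$, and $g(Q_\infty)$ is a continuum containing $u=g(x)$ and $v:=g(y)\in I_2'$. Hence $(u,v)\in R_L$ and $f(v)=y$, as required.

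I expect the main point needing care to be the lifting step: one must check that a single-valued continuous lift with $g(x)=u$ exists on all of $Q$, including its boundary. This follows from simple connectivity and local path connectedness of $Q$. With the lift in hand, the identification of the components of $L\cap Q'$ with the images of the components of $K\cap Q$ is automatic, because $g$ is a homeomorphism onto $Q'$ that carries $K\cap Q$ onto $L\cap Q'$.
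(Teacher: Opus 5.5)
Your proof is correct and follows the same route as the paper, which takes the component $Q_u$ of $f^{-1}(Q)$ containing $u$ and uses that $f|_{Q_u}$ is a homeomorphism onto $Q$; this $Q_u$ is exactly your sheet $Q'=g(Q)$. Your lifting-criterion argument makes explicit why such a homeomorphic sheet exists, and your transfer of conditions (a)--(c) through $g$ supplies the details the paper leaves implicit.
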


\begin{proof}
Fix a marked quadrilateral $Q\subset\Sc_2$ with respect to $(x,y)$ that satisfies conditions (a), (b), (c) of
Definition~\ref{R_K}. The preimage $f^{-1}(Q)$ has finitely many components, each of which is also a quadrilateral. Moreover, every of those components contains a point in $f^{-1}(x)$. Pick the one containing $u$ and denote it by $Q_u$. Since $f$ restricted to $Q_u$ is a homeomorphism onto $Q$, one can find $v\in Q_u$ with $f(v)=y$ such that $Q_u$ is a marked quadrilateral with respect to $(u,v)$. It follows that $(u,v)\in R_L$ that satisfies conditions (a), (b), (c) of Definition~\ref{R_K}. This proves the result.
\end{proof}

With this, the second part of Theorem \ref{invariance} relies on the next two lemmas.

\begin{lemm}\label{lem:fiber_onto_closure}
Let $f:X\rightarrow Y$ be an open map of a compactum onto another and $\Rc_X,\Rc_Y$  reflexive symmetric relations on $X$ and $Y$, respectively. If $f\left(\Rc_X(x)\right)\supset\Rc_Y(f(x))$  for all $x\in X$ then  $f\left(\overline{R_X}(x)\right)\supset\overline{R_Y}(f(x))$  for all $x$.
\end{lemm}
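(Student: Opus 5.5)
The plan is to argue directly with sequences, using openness of $f$ to lift convergent sequences. Here $\overline{\Rc_X}$ denotes the closure of $\Rc_X$ in $X\times X$, so that $\overline{\Rc_X}(x)=\{x'\colon (x,x')\in\overline{\Rc_X}\}$, and similarly for $Y$.

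Fix $x\in X$ and $y'\in\overline{\Rc_Y}(f(x))$. Then there are pairs $(a_n,b_n)\in\Rc_Y$ with $a_n\to f(x)$ and $b_n\to y'$. We must produce $x'\in\overline{\Rc_X}(x)$ with $f(x')=y'$.

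\textbf{Step 1: lift $a_n$ to points converging to $x$.} Since $f$ is open, for each $k\ge1$ the set $f(B_{1/k}(x))$ is an open neighborhood of $f(x)$ in $Y$. Hence there is $N_k$ such that $a_n\in f(B_{1/k}(x))$ for all $n\ge N_k$. We may take $N_1<N_2<\cdots$. For $N_k\le n<N_{k+1}$ we choose $u_n\in B_{1/k}(x)$ with $f(u_n)=a_n$, and for $n<N_1$ we choose arbitrary preimages $u_n$ (surjectivity). Then $u_n\to x$ and $f(u_n)=a_n$. This lifting of a convergent sequence through an open map is the only delicate point of the argument, and it is exactly where openness is used.

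\textbf{Step 2: apply the hypothesis.} Since $b_n\in\Rc_Y(a_n)=\Rc_Y(f(u_n))\subset f(\Rc_X(u_n))$, there is $v_n\in\Rc_X(u_n)$ with $f(v_n)=b_n$. By compactness of $X$ we pass to a subsequence with $v_n\to x'$. Then $(u_n,v_n)\in\Rc_X$ converges to $(x,x')$, so $x'\in\overline{\Rc_X}(x)$. By continuity of $f$ we get $f(x')=\lim b_n=y'$. Hence $y'\in f(\overline{\Rc_X}(x))$.

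Reflexivity and symmetry are not needed for this inclusion. They are stated only so that the lemma fits the intended application: $\Rc_X=R_L$ and $\Rc_Y=R_K$, with the covering map $f$ being open and the hypothesis supplied by Lemma~\ref{lem:fiber_onto}.
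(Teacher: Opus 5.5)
Your proof is correct and follows the same route as the paper's: approximate $(f(x),y')$ by pairs in $\Rc_Y$, use openness of $f$ to lift the first coordinates to a sequence converging to $x$, use the hypothesis to lift the second coordinates, and pass to a limit. You also supply two details the paper leaves implicit: the explicit construction of the lifted sequence $u_n\to x$, and the compactness step that extracts a convergent subsequence $v_n\to x'$ with $(x,x')\in\overline{\Rc_X}$.
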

\begin{proof}
For any $x\in X$ and any $v\in\overline{\Rc_Y}(u)$, where $u=f(x)$, we can find an infinite sequence $(u_k,v_k)_{k\ge1}$ in $\Rc_Y$ with $u_k\rightarrow u$ and $v_k\rightarrow v$. By the openness of $f$, for all $k\ge1$ we can find $x_k\in f^{-1}(u_k)$ with $x_k\rightarrow x$. Since $f\left(\Rc_X(x)\right)\supset\Rc_Y(f(x))$ by assumption, for all $k\ge1$ we can further find $y_k\in\Rc_X(x_k)$ such that $f(y_k)=v_k$. This ensures that $f(y)=\lim\limits_{k\rightarrow\infty}f(y_k)=v$.
\end{proof}

\begin{lemm}\label{lem:LYY_invariance_lemma}
Let $f:X\rightarrow Y$ be an open map of a compactum onto another. 
Let $\mathcal{R}_X$ on $X$ and $\mathcal{R}_Y$ on $Y$ be two closed reflexive symmetric relations such that $f(\mathcal{R}_X(x))\supset \mathcal{R}_Y(f(x))$ for all $x\in X$. If $\sim_X$ (respectively, $\sim_Y$) is the minimal closed equivalence relation containing $\mathcal{R}_X$ (respectively, $\sim_Y$), then  $f\left([x]_{\sim_X}\right)\supset [f(x)]_{\sim_Y}$  for all $x\in X$.	
\end{lemm}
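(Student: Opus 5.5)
Our plan is to pull the relation $\sim_X$ forward along $f$ and show that the resulting relation on $Y$ is a closed equivalence relation containing $\mathcal{R}_Y$; minimality of $\sim_Y$ then gives the claim. (The statement presumably intends $\sim_Y$ to be the minimal closed equivalence relation containing $\mathcal{R}_Y$.)

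\textbf{Step 1: the candidate relation.} Let $E\subset Y\times Y$ consist of all pairs $(u,v)$ such that, for \emph{every} $x\in f^{-1}(u)$, there is $y\in[x]_{\sim_X}$ with $f(y)=v$. Equivalently,
\[
E=\{(u,v): v\in f([x]_{\sim_X})\ \text{for all}\ x\in f^{-1}(u)\}.
\]
The desired inclusion $[f(x)]_{\sim_Y}\subset f([x]_{\sim_X})$ for all $x$ is exactly the statement $\sim_Y\subset E$. Since $\sim_Y$ is minimal, it suffices to show that $E$ is a closed equivalence relation containing $\mathcal{R}_Y$.

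\textbf{Step 2: algebraic properties.} Containment of $\mathcal{R}_Y$ is the hypothesis $f(\mathcal{R}_X(x))\supset\mathcal{R}_Y(f(x))$ together with $\mathcal{R}_X\subset\sim_X$. Reflexivity is trivial.

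For transitivity, let $(u,v),(v,w)\in E$ and fix $x\in f^{-1}(u)$. First choose $y\sim_X x$ with $f(y)=v$. Since $y\in f^{-1}(v)$, the definition of $E$ gives $z\sim_X y$ with $f(z)=w$, and then $z\sim_X x$. Here the ``for all preimages'' quantifier is essential.

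Symmetry is the delicate point. Given $(u,v)\in E$ and $y\in f^{-1}(v)$, we need some $x'\sim_X y$ with $f(x')=u$. The definition only supplies, for each preimage $x$ of $u$, a $\sim_X$-related preimage of $v$, not necessarily $y$. To get around this, we will not prove symmetry of $E$ directly. Instead we replace $E$ by its largest symmetric subrelation $E\cap E^{-1}$. This still contains $\mathcal{R}_Y$, because $\mathcal{R}_Y$ is symmetric, and it is still reflexive. Transitivity of $E\cap E^{-1}$ follows by applying the argument above to each of $E$ and $E^{-1}$; for $E^{-1}$ the same argument works with the roles of the coordinates swapped.

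\textbf{Step 3: closedness (the main obstacle).} We expect closedness of $E$ to be the hardest step, and this is where openness of $f$ is used, exactly as in Lemma~\ref{lem:fiber_onto_closure}. Let $(u_k,v_k)\in E$ converge to $(u,v)$ and fix $x\in f^{-1}(u)$. Because $f$ is open, we can choose $x_k\in f^{-1}(u_k)$ with $x_k\to x$. By the definition of $E$, pick $y_k\sim_X x_k$ with $f(y_k)=v_k$. Passing to a subsequence, $y_k\to y$. Since $\sim_X$ is closed, $y\sim_X x$, and by continuity $f(y)=v$. Hence $(u,v)\in E$. The same argument shows $E^{-1}$ is closed, so $E\cap E^{-1}$ is a closed equivalence relation containing $\mathcal{R}_Y$.

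By minimality, $\sim_Y\subset E\cap E^{-1}\subset E$, which is the statement of the lemma. Closedness of $\mathcal{R}_X$ and $\mathcal{R}_Y$ is not needed beyond the closedness of $\sim_X$ itself.
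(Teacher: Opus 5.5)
Your proof is correct and is essentially the paper's argument. Your relation $E\cap E^{-1}$ (i.e.\ $u$ and $v$ are related iff exactly the same $\sim_X$-classes meet $f^{-1}(u)$ and $f^{-1}(v)$) is precisely the paper's auxiliary relation $\approx$; both proofs show it is a closed equivalence relation containing $\mathcal{R}_Y$ and then invoke minimality of $\sim_Y$, with the only difference in the closedness step, where you lift convergent sequences via openness of $f$ and closedness of $\sim_X$ instead of separating by saturated open sets $U,V$ as the paper does. Your version of that step is in fact the cleaner one, since $f(U)$ and $f(V)$ need not be disjoint and the paper's neighbourhood should be $f(U)\times\bigl(Y\setminus f(X\setminus V)\bigr)$.
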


\begin{rema} Lemma~\ref{lem:LYY_invariance_lemma} has its origin in \cite[Lemma 3.5]{LYY-2020}, although our assumptions here are slightly weaker. For the sake of completeness, we give a self-contained proof below.	
\end{rema}

\begin{proof}
First, we define an equivalence  $\approx$ on $Y$ such that  $y_1, y_2\in Y$ are related under $\approx$ if and only if the union of the classes of 
$\sim_X$ intersecting $f^{-1}(y_1)$ equals the union of those intersecting $f^{-1}(y_2)$. 
We claim that 
\begin{equation}\label{eq:simapprox}
f([x_0]_{\sim_X}) \supset [f(x_0)]_\approx. 
\end{equation}
Indeed, suppose $y\in [f(x_0)]_\approx$, \emph{i.e.}, $y\approx f(x_0)$. Then we have
\[
\bigcup_{f(x)=f(x_0)}[x]_{\sim_X}= \bigcup_{f(x')=y}[x']_{\sim_X},
\]
and there is a one-to-one correspondence between the classes in the left and right hand side unions. In particular, the class $[x_0]_{\sim_X}$ occurs in the union on the right hand side. In other words, there is $x'$ with $f(x')=y$ such that $[x_0]_{\sim_X}=[x']_{\sim X}$. Thus $y\in f([x_0]_{\sim_X})$ and the claim is proved.

Second, we verify that $\approx$ contains $\mathcal{R}_Y$ as subsets of $Y\times Y$. To do that, by symmetry of $\mathcal{R}_Y$ it  suffices to show  that  for any $y_1,y_2\in Y$ that are related under $\mathcal{R}_Y$ we have
\begin{align}\label{eq:twounions}
	\bigcup_{f(x)=y_1}[x]_{\sim_X}&\subset\ \bigcup_{f(x')=y_2}[x']_{\sim_X}.
\end{align}
By assumption, for any $x$ with $f(x)=y_1$ we shall have  $f(\mathcal{R}_X(x))\supset \mathcal{R}_Y(y_1)$. Since $y_2\in\mathcal{R}_Y(y_1)$, this yields that  $y_2\in f(\mathcal{R}_X(x))$ and, hence, we can find $x'\in\mathcal{R}_X(x)$ such that $f(x')=y_2$. Moreover, by the definition of $\sim_X$, $x'\in \mathcal{R}(x)$ implies that $[x]_{\sim_X}=[x']_{\sim_X}$. Thus each class that is contained in the union in the left hand side of \eqref{eq:twounions} is contained in the union in the right hand side of  \eqref{eq:twounions} as well. This proves~\eqref{eq:twounions}.

Third, we claim that $\approx$ is a closed equivalence. 
Since $\approx$ is considered as a subset of the product space $Y\times Y$, we just need to verify that the complement of $\approx$ in $Y\times Y$ is open. Indeed, if $y_0,y_1\in Y$ are not related under $\approx$  then w.l.o.g.\ we may assume that there is a point $x_0\in f^{-1}(y_0)$ such that the class $[x_0]_{\sim_X}$ is disjoint from $f^{-1}(y_1)$. Now we can apply the upper semi-continuity of the decomposition $\left\{[x]_{\sim_X}: x\in X\right\}$ and find two disjoint open sets $U$ and $V$ which are saturated with respect to $\sim_X$, such that $U$ contains $x_0$ and $V$ contains $f^{-1}(y_{1})$. 
The openness of $f$ then ensures that $f(U)$ and $f(V)$  are disjoint open sets such that $U\times V$ is disjoint from $\approx$. This proves the claim.

Now, using that $\sim_Y$ is the minimal equivalence relation containing $\mathcal{R}_Y$ and  \eqref{eq:simapprox} we gain
\[
[f(x)]_{\sim_Y} \subset
  [f(x)]_{\approx}  \subset
f\left([x]_{\sim_X}\right),
\]
and the result follows.
\end{proof}

\begin{proof}[Proof of Theorem~\ref{invariance}~(2)]
Let $f:\Sc_1\rightarrow\Sc_2$ be a covering from a closed surface $\Sc_1$ onto a closed surface $\Sc_2$ and let $K\subset\Sc_2$  be a compactum. Since we already proved Theorem~\ref{invariance}~(1), we only need to prove that for any  atom $\delta$ of $L=f^{-1}(K)$ and any $x\in\delta$ the atom of $K$ containing $f(x)$ is contained in $f(\delta)$. 

To prove this, first observe that by Lemma~\ref{lem:fiber_onto} we have $f(R_L(x)) \supset R_K(f(x))$. By Lemma~\ref{lem:fiber_onto_closure} we even get that $f\big(\overline{R_L}(x)\big) \supset \overline{R_K}(f(x))$. From this, Lemma~\ref{lem:LYY_invariance_lemma} finally yields that $f(\delta)=f([x]_{\sim L})$ contains $[f(x)]_{\sim K}$, which is the atom of $K$ containing $f(x)$.  
\end{proof}

\section{Examples for determining the core decomposition}\label{examples}

In this section we consider special compacta $K$ on a closed surface $\Sc$ and determine their atoms. Given such a compactum $K$, fix a polygon $\Pc\subset\bbC$ and a rule that identifies certain pairs of the sides of $\Pc$ such that the resulting quotient space is topologically equivalent to $\Sc$. Let $\tau: \Pc\rightarrow\Sc$ be the underlying projection and $L=\tau^{-1}(K)$. Let  $\Dc_K^{PC}$ and $\Dc_L^{PC}$ be the core decompositions of $K$ and $L$, respectively. In special cases, we have exact information on how the atoms of $K$ are related to those of $L$.

\begin{Deff}\label{D_K_pi}
Let $\Dc_K^\tau$ be the finest usc decomposition of $K$ into subcontinua that split none of the images $\tau(\delta)$ with $\delta\in\Dc_L^{PC}$.
\end{Deff}

\begin{conj}\label{conjecture}
$\Dc_K^{PC}=\Dc_K^\tau$.
\end{conj}



Hereafter in this section, we focus on the special case that $\tau:\Pc\rightarrow\Sc$ allows a special polygon diagram $\Pc$ such that all the vertices of $\Pc$ are sent by $\tau$ to the same point. When $\Sc$ is the torus, such a diagram is given in Figure \ref{torus_tau}.  \begin{figure}[ht]
\includegraphics[trim = 0 0 0 0, width=0.3\textwidth]{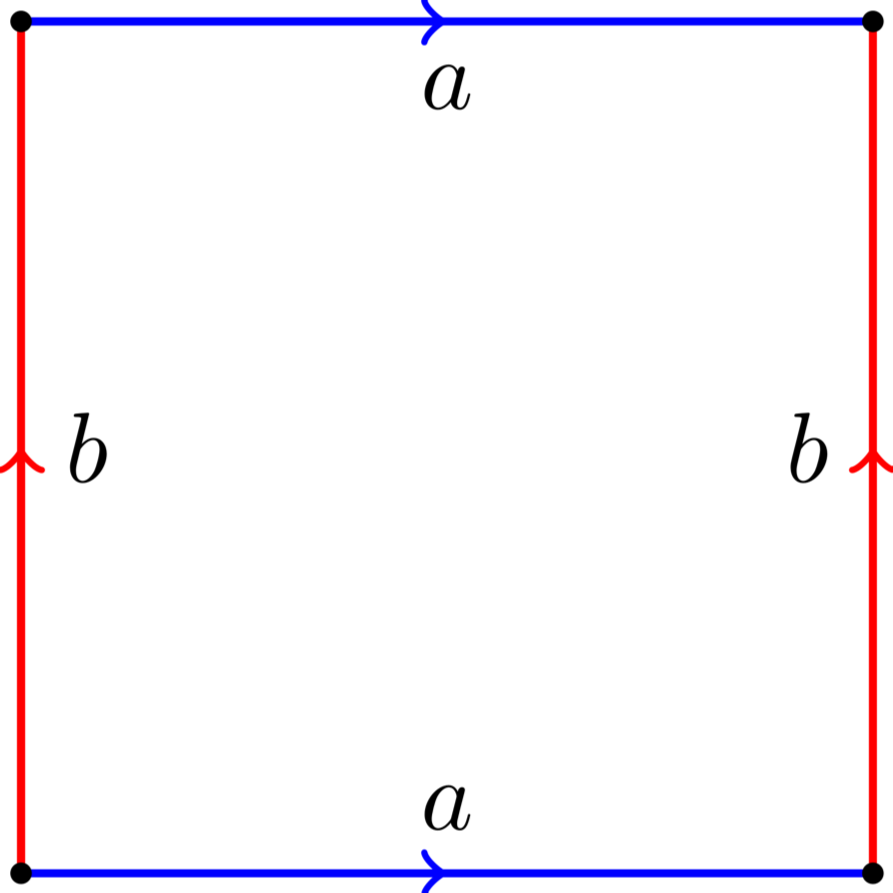}
\caption{The identification rule for  $\tau$.}\label{torus_tau}
\end{figure}

In the sequel, we fix a parametrization $\bbT=\left\{\left(e^{2\pi t\textbf{i}},e^{2\pi s\textbf{i}}\right): 0\le t,s\le1\right\}$. We also set $\varphi(x+y\textbf{i})=\left(e^{2\pi x\textbf{i}},e^{2\pi y\textbf{i}}\right)$ and call it the {\it covering map} of $\bbC$ onto $\bbT^2$. Clearly,  $\tau:[0,1]^2\rightarrow\bbT$ is just the restriction of $\varphi$ to $[0,1]^2$.   In Example \ref{torus_1}, we consider a continuum $K\subset\bbT$ and determine all its atoms.

\begin{exam}\label{torus_1} Let
$K=\left\{\tau(t+s\textbf{i}): \{t,s\}\cap\Kc\ne\emptyset\right\}$ where $\Kc$ is Cantor's ternary set.
Let $\Kc_0=\Kc\setminus(\{a_i:i\ge1\}\cup\{b_i:i\ge1\})$ where $(a_i,b_i)\ (i\ge1)$ are the components of $[0,1]\setminus\Kc$. Then an atom of $K$ falls into one of three possibilities.
\begin{itemize}
\item[(1)] The boundary of a Jordan domain which is  a component of $\bbT^2\setminus K$. 
\item[(2)] A line segment of the form $\{\tau(t+s\textbf{i}): \ s\in [a_j,b_j]\}$ or $\{\tau(s+t\textbf{i}): \ s\in [a_j,b_j]\}$
with  $(a_j,b_j)$ a component  of $[0,1]\setminus\Kc$ and $t\in \Kc_0$.
\item[(3)] A singleton  $\{\tau(t+s\textbf{i})\}$ with $t,s\in\Kc_0$. 
\end{itemize}
For an illustration, see Figure~\ref{K_on_torus}.

\begin{figure}[ht]
\includegraphics[trim = -50 0 0 0, width=0.8\textwidth]{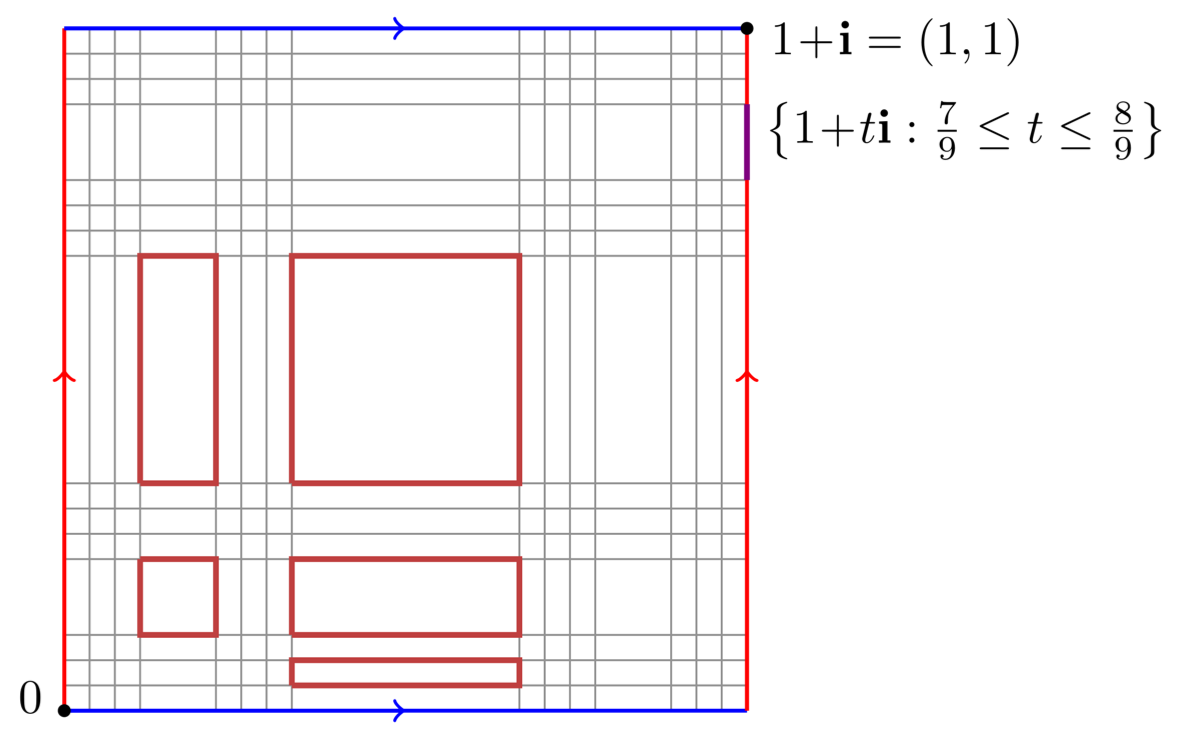}
\caption{A simple illustration of  $\tau^{-1}(\delta)$ for some atoms $\delta\in\Dc_K^{PC}$.}\label{K_on_torus}
\end{figure}
\end{exam}


As an illustration of Theorem \ref{invariance}, Example \ref{torus_2} concerns a covering map $f:\bbT^2\rightarrow\bbT^2$ and shows how the atoms of $f^{-1}(K)$ are connected to those of $K$.

\begin{exam}\label{torus_2}
Let $K$ and $L$ be defined as in Example \ref{torus_1}.
Given $m,n\in\bbZ\setminus\{0\}$, let $f\Big(\left(e^{2\pi{\mathbf i}t}, e^{2\pi{\mathbf i}s}\right)\Big)=\left(e^{2\pi{\mathbf i}mt}, e^{2\pi{\mathbf i}ns}\right)$ for $0\le t,s\le 1$. Then $f$ is a covering map of $\bbT^2$ onto itself, whose lift  via  $\varphi:\bbC\rightarrow\bbC$ is just $\widetilde{f}(x+y\textbf{i})=mx+ny\textbf{i}$.
Set $K_1=f^{-1}(K)$ and  $L_1=\tau^{-1}\left(K_1\right)$.
Then $L_1=\bigcup_{u,v}L+u+v\textbf{i}$ with $u$ running through $\{0,\ldots,m-1\}$ and $v$ running through $\{0,\ldots,n-1\}$ is a continuum. One can check that every atom of $K_1$ is
sent onto an atom of $K$ by the covering map $f:\bbT^2\rightarrow\bbT^2$. See Figure \ref{torus_1(2)}  for an illustration of $L_1$, with $m=3$ and $n=2$.
\begin{figure}[ht]
\vskip -0.2cm
\includegraphics[trim = 0 5 0 5, width=0.5\textwidth]{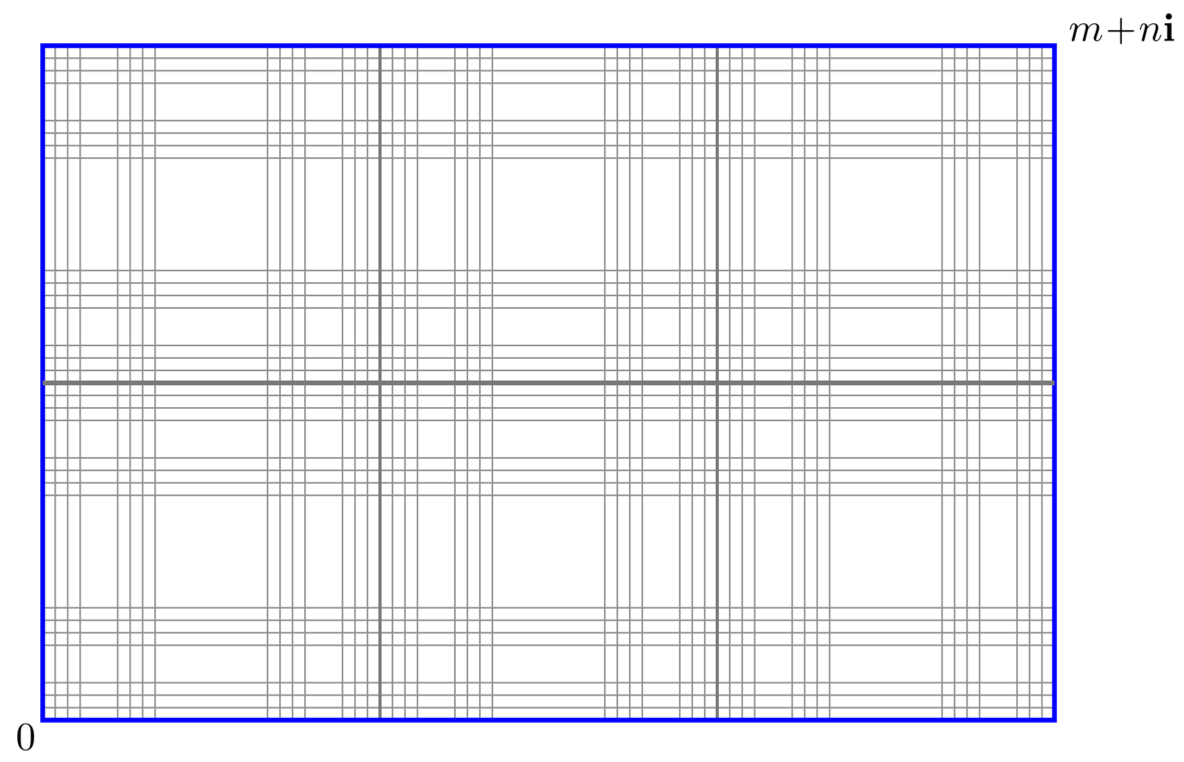}
\caption{The translates $L_1+u+v\textbf{i}$  $(0\le u\le m-1, 0\le v\le  n-1)$.}\label{torus_1(2)}
\vskip -0.2cm
\end{figure}
\end{exam}


Notice that the two compacta $K,K_1\subset\bbT^2$ share a fundamental property: every component $U$ of the complement is a Jordan domain whose boundary is an atom of $K$.
Therefore, we may extend $\Dc_K^{PC}$ (respectively, $\Dc_{K_1}^{PC}$) to an usc decomposition of $\bbT^2$ into subcontinua, each of which has a connected complement in $\bbT^2$. Each element of the extension is either an atom of $K$ (respectively, of $K_1$) or the closure $\overline{U}$, where $U$ is a component of $\bbT\setminus K$ (respectively, of $\bbT\setminus K_1$). By a generalization of Moore's theorem to surfaces (see {\em e.g.} \cite[Section~5, Theorem~6 and Section~25, Theorem~1]{Daverman86}) the resulting quotient space is homeomorphic to $\bbT^2$.

\begin{rema}
For closed surfaces $\Sc$ of higher genus, we may construct special continua $K\subset\Sc$ with the above mentioned property. For an oriented surface $\Sc$ of genus two, such a continuum $K$ is presented in the following Figure \ref{torus_3}, which describes the identification rule of a projection $\tau$ from an octagon $\Pc$ onto $\Sc$.
\begin{figure}[ht]
\begin{tabular}{cc}
\includegraphics[trim = 10 10 10 10, width=0.3\textwidth]{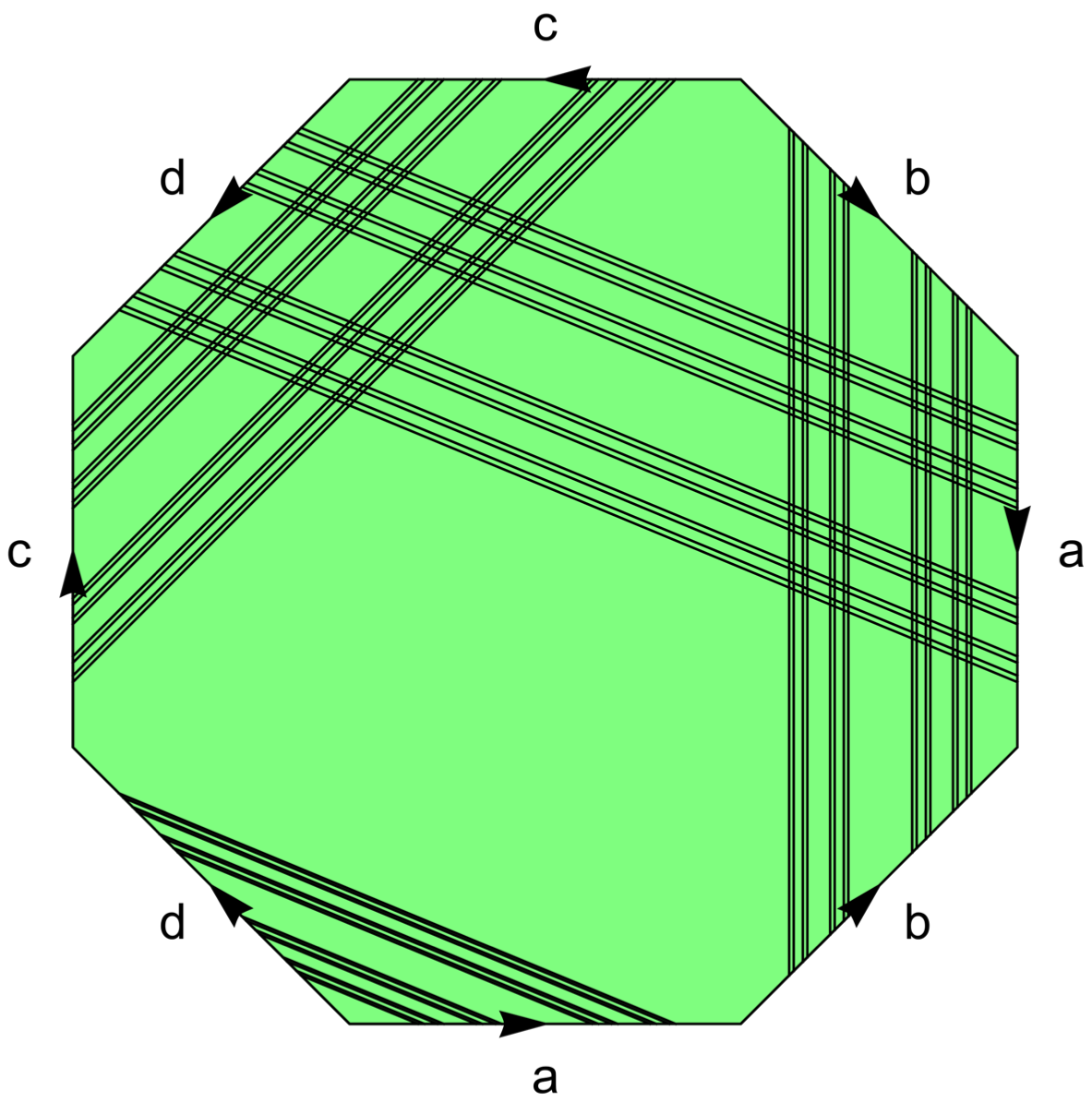}
&
\includegraphics[trim = 50 100 30 100, width=0.45\textwidth]{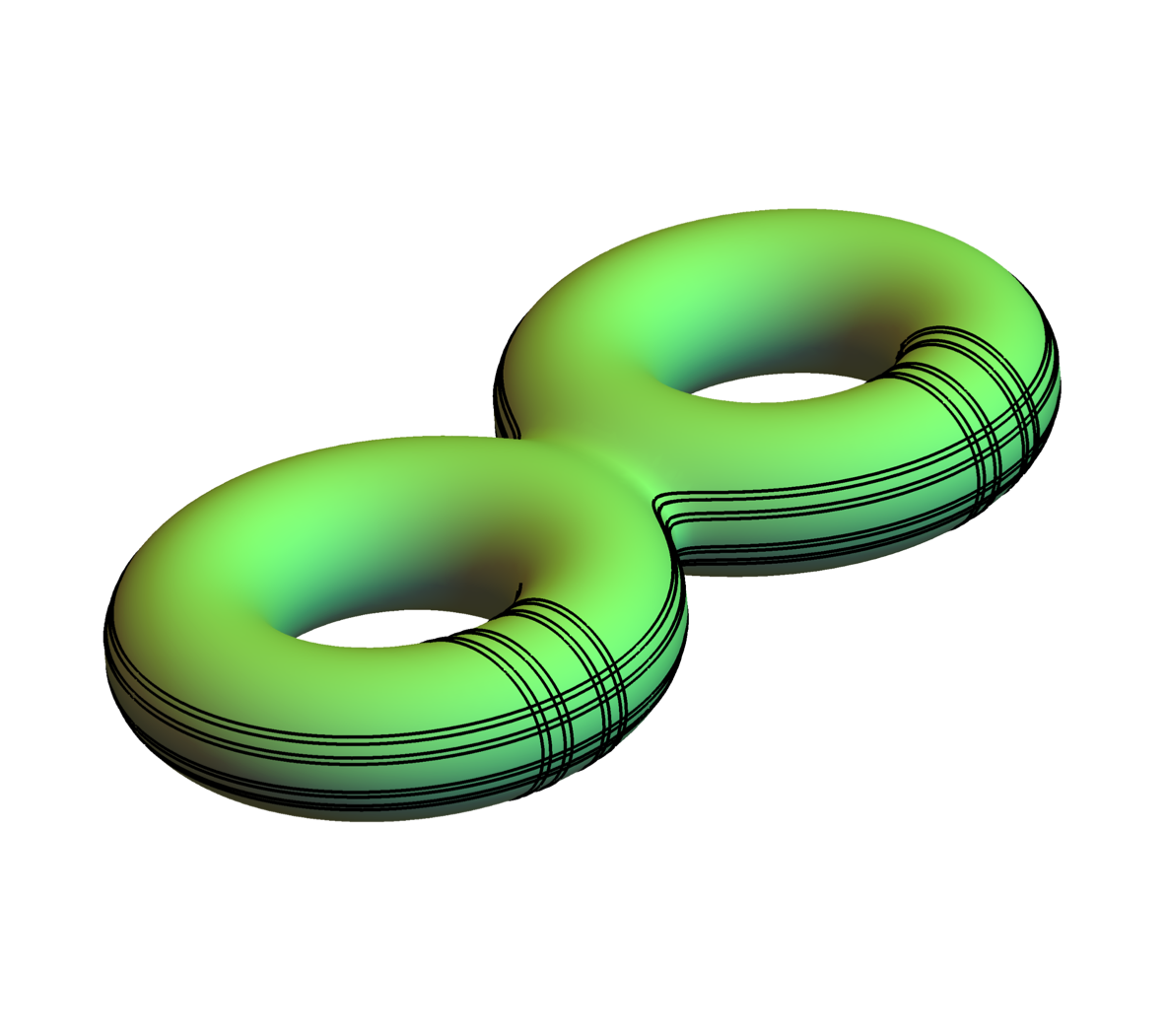}
\end{tabular}
\caption{The identification rule of $\tau$ and $\tau^{-1}(K)\subset\Pc$, $K\subset\Sc$.}\label{torus_3}
\vskip -0.2cm
\end{figure}
Further continua on closed surfaces of higher genus may be constructed in a similar way. The same construction still works, if we turn to closed surfaces that are nonorientable.
\end{rema}

\section{Atomic decompositions of compacta in $\bbR^3$}
\label{sec:ex2}

In this section we show that our theory cannot be extended to higher dimensional Euclidean space. In particular, we analyze a continuum $K$ (and a variant of it), having its origin in \cite[Example 7.1]{LLY-2019}. This continuum is not locally connected and satisfies $\widetilde{R_K}=\{(x,x): x\in K\}$. Throughout this section, we will use the projections  $p_1(x_1,x_2,x_3)=(x_1,x_2)$ and $p_2(x_1,x_2,x_3)=(x_1,x_3)$ and the abbreviations  $I_0=\{0\}\times[0,1]\times\{0\}$ and $\Lambda_0=\{0\}\times[0,1]^2$. 

\begin{exam}\label{Example_no_CD}
Let $K\subset\bbR^3$ defined by
\[
K=\bigg(\Big(\{0\}\cup\Big\{\frac1n: n\in\bbN\Big\}\Big)\times[0,1]^2\bigg) \cup \big([0,1]\times\{0\}\times[0,1]\big) \cup \big([0,1]^2\times\{0\}\big). 
\]
Then $\Dc_i^*=\left\{p_i^{-1}(u): u\in[0,1]^2\right\}$ for $i=1,2$ is a Peano decomposition, while the only usc decomposition that refines both, $\Dc_1^*$ and $\Dc_2^*$, is given by the trivial decomposition $\{\{u\}: u\in K\}$. Therefore, the core decomposition does not exist. \end{exam}

We will determine all the atomic decompositions of a continuum that is a slight variant of $K$. It is given by
\[
K_0=\bigg(\Big(\{0\}\cup\Big\{\frac1n: n\in\bbN\Big\}\Big)\times[0,1]^2\bigg) \cup \bigg([0,1]^2\times\{0\}\bigg).
\]
\begin{figure}[ht]
\begin{center}
\includegraphics[trim = 0 10 0 5, width=0.3\textwidth]{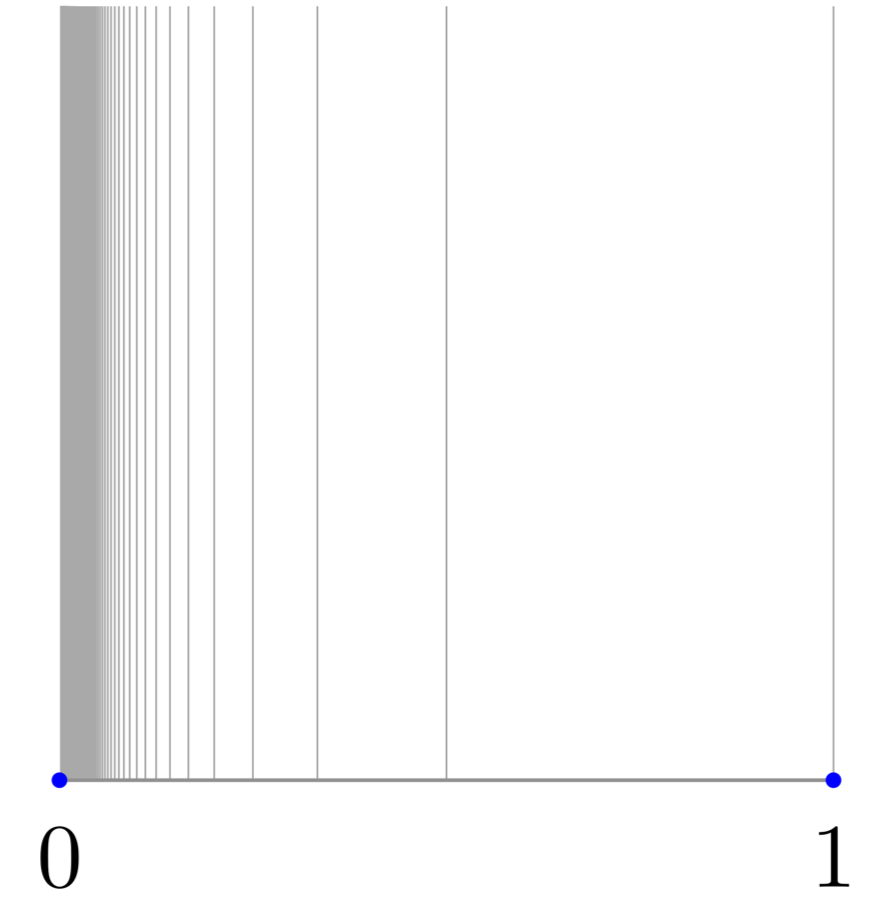}
\end{center}
\caption{An illustration of $p_2(K_0)\subset\bbR^2$.}\label{K_projection}
\end{figure}

Let $\mathfrak{M}_{K_0}$ consist of the Peano decompositions of $K_0$. We address three issues.
The first concerns how to construct an atomic decomposition $\Dc_1\in\mathfrak{M}_{K_0}$ and a family of homeomorphisms $h_\lambda:K_0\rightarrow K_0$ depending on a parameter $\lambda\in(0,1)$ such that by applying $h_\lambda$ to $\Dc_1$ we can obtain uncountably many atomic decompositions $h_\lambda(\Dc_1):=\{h_\lambda(\delta): \delta\in\Dc_1\}$.

\begin{exam}\label{continuation-1} Let
$\Dc_1$ consist of all the line segments  $\{0\}\times\{x_2\}\times[0,1]$ with $x_2\in[0,1]$ and all the singletons $\{(x_1,x_2,x_3)\}$ with  $x_1>0$. Then $\Dc_1\in\mathfrak{M}_{K_0}$ and the resulting quotient space is homeomorphic to the union 
\begin{equation}\label{quotient-1}
X_1:=\big([0,1]^2\times\{0\}\big) \cup \bigg(\bigcup_{n\in\bbN}
\Big\{\frac1n\Big\}\times[0,1]\times\Big[0,\frac1n\Big]\bigg).
\end{equation}
See Figure \ref{quotient_projection} for a projection of $X_1$ under $p_2$.
\begin{figure}[h]
\begin{subfigure}[b]{0.45\textwidth}
         \centering
     \includegraphics[trim = 0 5 0 5, width=0.75\textwidth]{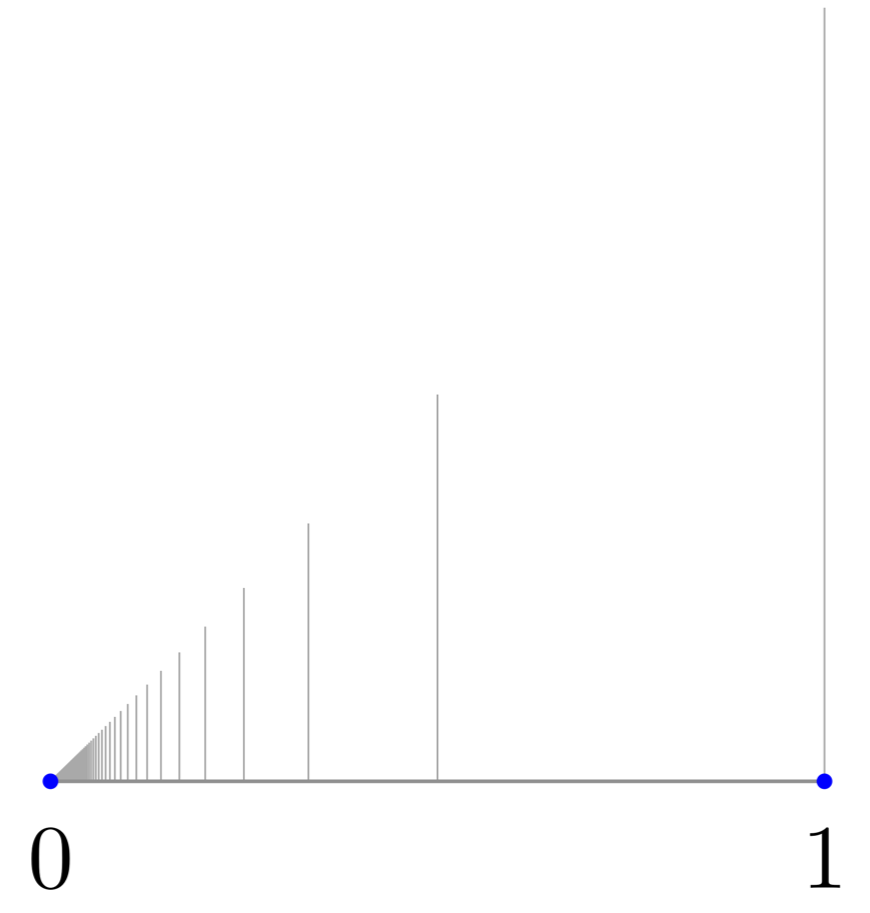}
     \caption{An illustration of $p_2(X_1)\subset\bbR^2$.}\label{quotient_projection}
\end{subfigure}
\hspace{0.618cm}
\begin{subfigure}[b]{0.45\textwidth}
         \centering
     \includegraphics[trim = 0 5 0 5, width=0.75\textwidth]{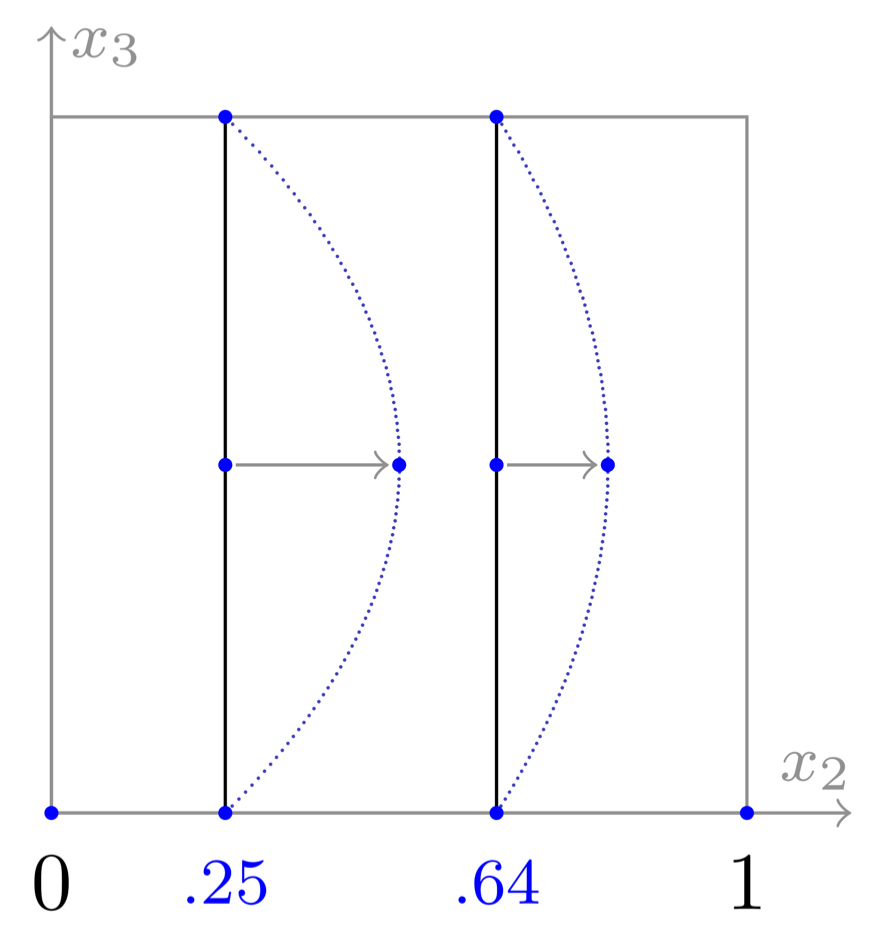}
\caption{An illustration of $\{x_1\}\times\{x_2\}\times[0,1]$ and its image under $h_\frac12$.}\label{fig:h_lambda}
\end{subfigure}
\end{figure}
Moreover, for $\lambda\in(0,1)$  we set \begin{equation}\label{h_lambda}
h_\lambda\left(\left[\begin{array}{c}
x_1\\ x_2\\ x_3\end{array}\right]\right)=\left[\begin{array}{c}
x_1\\ x_2\\ x_3\end{array}\right]+4x_3(1-x_3)(x_2^\lambda-x_2)\left[\begin{array}{c}
0\\ 1\\ 0\end{array}\right].
\end{equation}
See Figure \ref{fig:h_lambda} for an illustration of how the line segments $\{x_1\}\times\{x_2\}\times[0,1]$ with $\displaystyle x_1\in\{0\}\cup\Big\{\frac1n:n\in\bbN\Big\}$ and $x_2\in(0,1)$ are bent under $h_\lambda$, with $\lambda=\frac12$.
Note that every decomposition $h_\lambda(\Dc_1)=\{h_\lambda(\delta): \delta\in\Dc_1\}$ belongs to $\mathfrak{M}_{K_0}$. Also note that $\Dc_1$ is an atomic decomposition if and only if $h_\lambda(\Dc_1)$ is. 
\end{exam}

\begin{prop}\label{D1_atomic}
$\Dc_1$   is an atomic decomposition. Hence, every $h_\lambda(\Dc_1)$ is an atomic decomposition of $K_0$.

\end{prop}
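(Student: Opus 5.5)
We must show that no Peano decomposition of $K_0$ properly refines $\Dc_1$. The nondegenerate elements of $\Dc_1$ are exactly the vertical segments $\sigma_{x_2}=\{0\}\times\{x_2\}\times[0,1]$. So it suffices to take an arbitrary Peano decomposition $\Dc$ of $K_0$ that refines $\Dc_1$ and show that every $\sigma_{x_2}$ lies in a single element of $\Dc$. Since every element of $\Dc$ is a subcontinuum of some $\sigma_{x_2}$, it is a point or a subsegment $\{0\}\times\{x_2\}\times J$. The claim about $h_\lambda(\Dc_1)$ then follows at once, because $h_\lambda$ is a homeomorphism of $K_0$ onto itself and maps Peano decompositions to Peano decompositions (as already noted in Example~\ref{continuation-1}).

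Fix $x_2\in[0,1]$ and suppose $\sigma_{x_2}$ meets two distinct elements of $\Dc$. Then, since $\Dc$ is usc, we can find heights $0\le a<b\le 1$ such that $u=(0,x_2,a)$ and $v=(0,x_2,b)$ lie in different elements of $\Dc$. I would then derive a contradiction via Proposition~\ref{closure_R}, by showing $(u,v)\in\widetilde{R_{K_0}}$.

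The natural attempt is to use small balls $U_{u,n},U_{v,n}$ around $u$ and $v$. However, $\widetilde{R_{K}}$ is known to be trivial for the closely related continuum $K$, and the same is probably true for $K_0$, because the plates $\{1/n\}\times[0,1]^2$ are joined through the base $[0,1]^2\times\{0\}$. The condition in Definition~\ref{def:RTilde} therefore likely fails, and a direct contradiction argument must be used instead.

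Let $\pi:K_0\to\Dc$ be the projection. The plates $P_n=\{1/n\}\times[0,1]^2$ are unions of singletons in $\Dc$, so $\pi$ is injective on $P_n\setminus\{x_3=0\}$. Choose disjoint open sets $V_u\ni\pi(u)$ and $V_v\ni\pi(v)$ in the quotient, and a point $w=(0,x_2,c)$ with $a<c<b$, or use an arc of $\sigma_{x_2}$ from $u$ to $v$. Every small neighborhood of $\pi(w)$ in the quotient contains pieces of the plates $P_n$ near height $c$. If the neighborhood is taken inside the complement of the base and of $V_u\cup V_v$ as needed, these pieces are separated from each other: the only connection between distinct plates passes through the base $x_3=0$, or through the limit plate $\Lambda_0$, which is cut at heights $a$ and $b$ by the different $\Dc$-elements there. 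More concretely, take $W$ to be a small closed neighborhood of $\pi(w)$ consisting of the images of the points with $x_3\in[a',b']\subset(a,b)$ near $w$. Its components meeting the plates are disjoint for different $n$ and accumulate at $\pi(w)$. Hence the quotient is not locally connected at $\pi(w)$, which is a contradiction.

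The main obstacle is the case where the segment splits as $[0,a]\cup[a,1]$, or near the base, so that the point where local connectedness fails must be located correctly. Since the base is already a connector at height $0$, the failure has to be found strictly above height $0$, in a region where consecutive plates are separated. To handle this I would verify that, for points $w$ with $x_3>0$, the saturated neighborhood can be chosen to avoid the base; this uses upper semicontinuity and the fact that the elements of $\Dc$ are sub-segments. Then on the plates it is simply a product neighborhood, and the plates within it are pairwise disjoint.
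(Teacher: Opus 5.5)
There is a genuine gap in where you locate the failure of local connectedness. Your strategy is the right one, and it is essentially the paper's: find a point of the quotient at which images of plate points accumulate but stay separated from it inside a small saturated neighborhood lying above the base. The problem is the choice of that point. You take $w=(0,x_2,c)$ with $a<c<b$ and work with a neighborhood built from points with $x_3\in[a',b']\subset(a,b)$. This tacitly assumes $\Dc(w)\subset\{x_3\in(a',b')\}$, which nothing guarantees. In fact $\Dc(w)$ may coincide with $\Dc(u)$ or $\Dc(v)$, and in particular it may contain the base point $(0,x_2,0)\in I_0$.

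Here is a concrete case. Split every $\sigma_y$ into $\{0\}\times\{y\}\times[0,\frac34]$ and singletons above height $\frac34$; this is a usc refinement of $\Dc_1$. Take $u,v$ to be the bottom and top points of $\sigma_{x_2}$ and $c=\frac12$. Let $N_\varepsilon$ be the set of points of $K_0$ with $x_1<\varepsilon$, second coordinate within $\varepsilon$ of $x_2$, and $x_3<\frac34+\varepsilon$. These sets are saturated, open, connected (each plate piece and the piece of $\Lambda_0$ meet the base), and shrink to $\Dc(w)$. So the quotient \emph{is} locally connected at $\pi(w)$, and no contradiction arises there.

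Your proposed remedy in the last paragraph fails for exactly this reason. It is not true that for every $w$ with $x_3>0$ a saturated neighborhood can be chosen to avoid the base; this holds if and only if the element $\Dc(w)$ itself misses the base. Likewise, $\Lambda_0$ being ``cut at heights $a$ and $b$'' is beside the point. Inside $\{x_3>0\}$ each plate piece is already clopen in $K_0$, whatever $\Dc$ does on $\Lambda_0$.

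The missing observation is that some element on $\sigma_{x_2}$ misses $I_0$. Elements of $\Dc$ there are subsegments, and only one of them contains $(0,x_2,0)$. So, for example, the element of the top point $(0,x_2,1)$ is $\{0\}\times\{x_2\}\times[\alpha,1]$ with $\alpha>0$; the element of $v$, the higher of your two points, also works. Take $w$ in such an element. Upper semicontinuity then gives a closed saturated neighborhood $\pi^{-1}(B)\subset\{x_3>\alpha/2\}$ of $\Dc(w)$, whose components lie in single plates or in $\Lambda_0$. Since $\pi$ is monotone and closed, the points $\pi((1/n,x_2,1))\to\pi(w)$ lie in pairwise distinct components of $B$. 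Hence the quotient, a continuum, is not locally connected at $\pi(w)$.

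This is the paper's proof. It uses the element $\delta=\{0\}\times\{s\}\times[1-t,1]$ of the top point and phrases the same fact as follows: every subcontinuum joining $\pi(\delta)$ to $\pi((1/n,s,1))$ must meet $\pi(I_0)$, so it has diameter at least ${\rm dist}(\pi(\delta),\pi(I_0))>0$.
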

\begin{proof}Assume on the contrary that $\Dc_1$ were not atomic, so that we could find a decomposition $\Dc\in\mathfrak{M}_{K_0}$ that refines $\Dc_1$, such that there is at least one element $\delta\in\Dc_1$ of the form $\{0\}\times\{s\}\times[1-t,1]$, with $s,t\in(0,1)$. Let $\pi_\Dc:K_0\rightarrow\Dc$ be the corresponding quotient map and $\rho$ a metric on the quotient space that is compatible with the quotient topology. Then the distance under $\rho$ between the point $\pi_\Dc(\delta)$ and the continuum $\pi_\Dc(I_0)$ is a positive number, say $\varepsilon_0$.  There are two observations. First, the sequence
\[\displaystyle \left\{\pi_\Dc\left(\left\{\frac1n\right\}\times\{s\}\times\{1\}\right): \ n\ge1\right\}
\]
converges to $\pi_\Dc(\delta)$ in the quotient space. Second,  every subcontinuum of the quotient space that connects $\pi_\Dc(\delta)$ to a point in the above sequence must intersect the continuum $\pi_\Dc(I_0)$. Such a subcontinuum is of diameter $\ge \varepsilon_0$. It follows that the quotient space of $\Dc$ is not locally connected at the point $\pi_\Dc(\delta)$. This is absurd, since $\Dc\in\mathfrak{M}_{K_0}$. We are done.
\end{proof}

\begin{rema}\label{atomic_condition}
Let $\Dc\in\mathfrak{M}_{K_0}$. By slightly adjusting the above arguments, one may infer that every element $\delta\in\Dc$  intersecting  $\Lambda_0$ must intersect  $I_0$.
\end{rema}

The second issue is to find two  atomic decompositions $\Dc_2\ne\Dc_3$, neither of which can be obtained by applying a homeomorphism  $h:K_0\rightarrow K_0$ to $\Dc_1$. The quotient space of $\Dc_2$ is still homeomorphic with that of $\Dc_1$, while that of $\Dc_3$ is not.

\begin{exam}\label{continuation-2} Let
$\Dc_2$ be a decomposition with two conditions:  (1) every singleton $\{(x_1,x_2,x_3)\}$ with  $x_1>0$ belongs to $\Dc$; (2) all elements $\delta\subset\Lambda_0$ form an usc decomposition of $\Lambda_0$ and they are given in the following way. One of them is
\[
\displaystyle\delta_0=\Big\{(0,x_2,x_3)\in K: x_2=\frac12\ \text{or}\ \Big(x_2-\frac12\Big)^2+\Big(x_3-\frac12\Big)^2\le\frac{1}{16}\Big\}.
\]
The others are each an arc connecting  $(0,x_2,0)$ to $(0,x_2,1)$ for some $x_2\ne\frac12$.
Then $\Dc_2$ is an atomic decomposition of $K_0$ and the resulting quotient space is homeomorphic to $X_1$ given in Equation \ref{quotient-1}.
See Figure~\ref{D2andD3} (left) for an illustration of $Q_0:=\{(x_2,x_3): (0,x_2,x_3)\in\delta_0\}$.
\begin{figure}[ht]
\begin{tabular}{cc}
\includegraphics[trim = 0 10 0 5, width=0.33\textwidth]{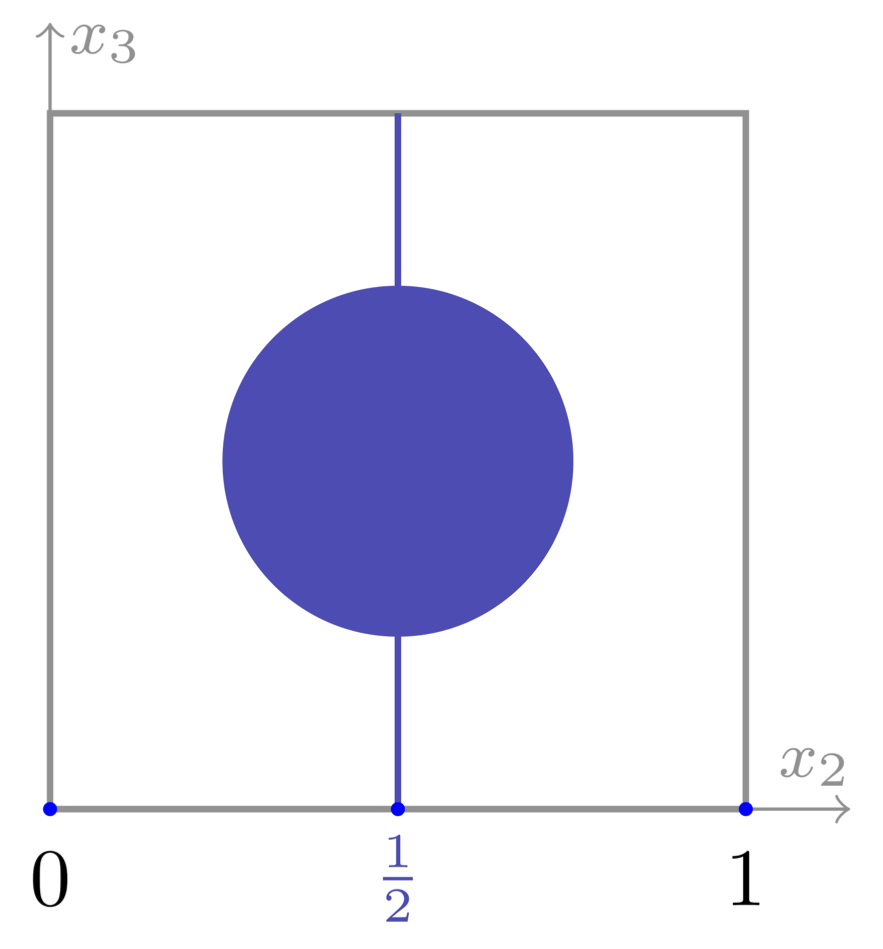}
&\hspace{1cm}
\includegraphics[trim = 0 10 0 5, width=0.33\textwidth]{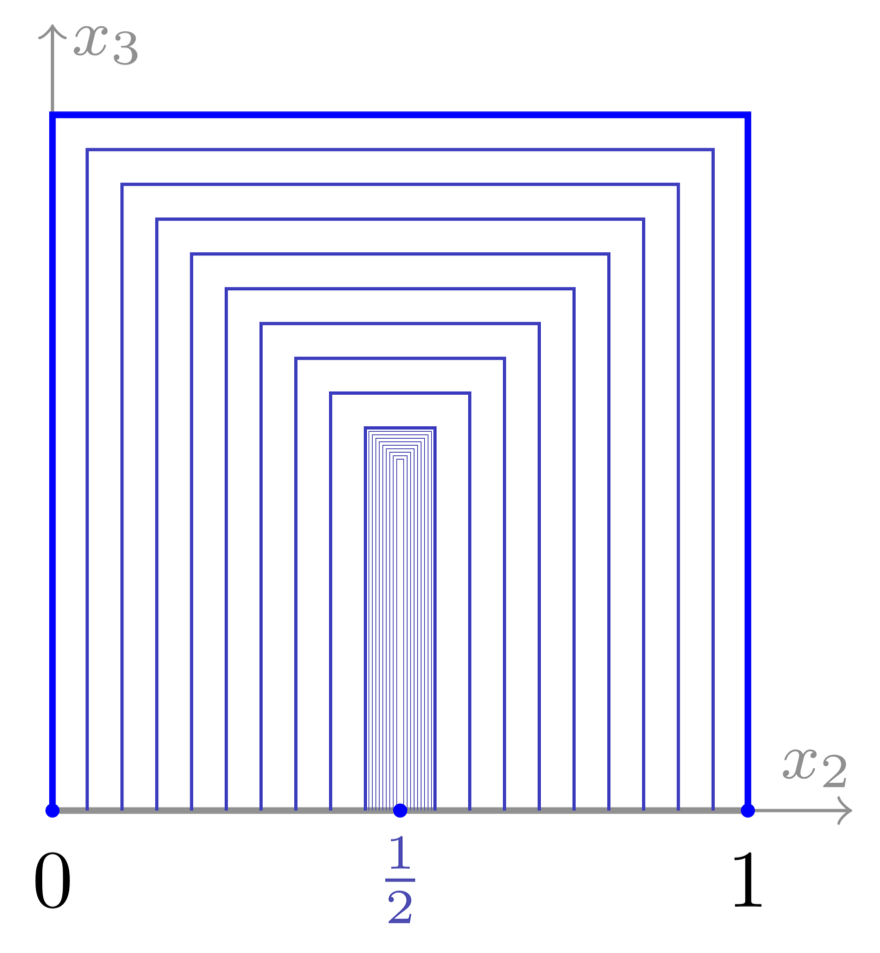}
\end{tabular}
\caption{An illustration of $Q_0$(left) and some elements of $\Dc_3$ (right).}\label{D2andD3}
\vskip -0.2cm
\end{figure}
The decomposition of $\Dc_3$ shares with $\Dc_2$ all its elements that are singletons.
Moreover, its nondegenerate elements are of the form
\[
(\{0\}\times\{x_2,1-x_2\}\times[0,1-x_2]) \cup (\{0\}\times[x_2,1-x_2]\times\{1-x_2\})
\] 
with $0\le x_2\le \frac12$.
See Figure \ref{D2andD3}~(right) for an illustration of some nondegenerate elements of $\Dc_3$. Note that the quotient space of $\Dc_3$ is homeomorphic with a special quotient space of $X_1$, by identifying every point $(0,x_2,0)$ with $(0,1-x_2,0)$ with $0\le x_2<\frac12$. It is routine to verify that the quotient space of $\Dc_2$ is homeomorphic to that  of $\Dc_1$, while that of $\Dc_3$ is  not.
\end{exam}

Finally, the third issue is to  identify all atomic decompositions of $K_0$. To gain further insights, let us start from some basic observations concerning what a generic atomic decomposition of $K_0$ may look like.
Fix a Peano decomopsition $\Dc\in\mathfrak{M}_{K_0}$. By chopping up certain elements of $\Dc$ into smaller pieces, we may obtain a finer decomposition $\Dc^*$,  still belonging to $\mathfrak{M}_{K_0}$, that turns out to be atomic in the end. Every  $\delta\in\Dc$ disjoint from  $\Lambda_0$ will be divided into singletons.  Every $\delta\in\Dc$ that intersects  $\Lambda_0$ will be further divided into a collection $\Ec_\delta$ of subcontinua that form an usc decomposition of $\delta$. There are two types of elements in $\Ec_\delta$. The first are the singletons $\{u\}$ with $u\in\delta\setminus\Lambda_0$. The second are components of  $\delta\cap\Lambda_0$. By the Cut Wire Theorem (see Lemma \ref{lem:CWT}), each of those components intersects  $I_0$. Now it is routine to verify that the quotient space of $\Dc^*$ is a Peano continuum.

We summarize all the atomic decompositions of $K_0$ in the following.

\begin{exam}\label{continuation-3}
For the above-mentioned continuum $K_0$, every atomic decomposition $\Dc$ (with Peano quotient) is a Peano decomposition whose elements $\delta$ satisfy the following  conditions.
\begin{itemize}
\item[(1)] If $\delta\setminus\Lambda_0\ne\emptyset$ then $\delta=\{u\}$ for some $u\in K\setminus\Lambda_0$.
\item[(2)] If  $\delta\subset\Lambda_0$ then $\delta\cap I_0\ne\emptyset$. 
\item[(3)] All those $\delta\in\Dc$ with $\delta\subset\Lambda_0$  form an usc decomposition $\Dc_0$ of $\Lambda_0$. 
\end{itemize}
It follows that the resulting quotient space of $\Dc_0$ is a dendrite, which is a subset of the quotient space of $\Dc$.
\end{exam}

To conclude this section, let us look closer at the atomic decompositions of $K_0$. Condition (1) from  Example \ref{continuation-2} implies that for any atomic decomposition $\Dc$  there is an embedding $h_\Dc$ of  $X_0:=X_1\setminus I_0$ into the quotient space of $\Dc$ such that the image $h_\Dc(X_0)$ is a dense subset. 
Here $X_1$ is given as in Equation \ref{quotient-1}. 
Condition (2) implies that
$\Dc$ restricted to  $I_0$ is an usc decomposition. By identifying $I_0$ with $[0,1]$, we may associate to $\Dc$ a closed equivalence of $[0,1]$, to be denoted by $\sim_\Dc$.
There are two possibilities, either $0$ and $1$ are contained in the same class or they are not. In the former case, we may associate to $\sim_\Dc$ an usc decomposition of $\overline{\bbD}$ into convex sets, with respect to hyperbolic metric. Such a decomposition has its origin in the study of polynomial Julia sets and is often called a lamination. See for instance \cite{BL92,BCO11,Kiwi04,Thurston09}. Therefore, the topological models for certain polynomial Julia sets, and for the Mandelbrot set or its higher degree analogues, may be realized as a subset of the quotient space of an appropriate atomic decomposition $\Dc$ of $K_0$.


\bibliographystyle{abbrv}
\bibliography{biblio}

\end{document}